\documentclass[11pt,reqno]{amsproc}
\usepackage[margin=1in]{geometry}
\usepackage{amsmath, amsthm, amssymb}
\usepackage{hyperref}
\hypersetup{colorlinks=true, pdfstartview=FitV, linkcolor=blue,citecolor=blue, urlcolor=blue}
\usepackage[abbrev,lite,nobysame]{amsrefs}
\usepackage{times}
\usepackage[usenames,dvipsnames]{color}
\usepackage{bbm}
\usepackage{bm}
\usepackage{subcaption}
\usepackage{mathtools}
\usepackage{pdfpages}
\usepackage{comment}
\setlength{\parindent}{0pt}
\textheight 669pt
\textwidth  424pt
\oddsidemargin  -1mm
\evensidemargin -1mm
\topmargin      -8mm
\pagestyle{plain}


\definecolor{labelkey}{rgb}{0,0,1}

\newcommand{\R}{\mathbb{R}}

\newcommand{\N}{\mathbb{N}}

\newcommand{\C}{\mathbb{C}}

\newcommand{\cC}{\mathcal{C}}

\newcommand{\D}{\mathcal{D}}
\newcommand{\hU}{\mathcal{U}}

\newcommand{\bu}{\bm{u}}

\newcommand{\dd}{\mathrm{d}}

\newcommand{\bv}{\bm{v}}
\newcommand{\bb}{\mathbf{b}}
\newcommand{\de}{\partial}

\newcommand{\wt}[1]{\widetilde{#1}}


\providecommand{\R}{\mathbb{R}}
\providecommand{\C}{\mathbb{C}}
\providecommand{\N}{\mathbb{N}}

\providecommand{\eps}{\varepsilon}

\renewcommand{\leq}{\leqslant}

\providecommand{\Div}{\operatorname{div}}

\newcommand{\mdiv}{\operatorname{\bf{div}}}


\newcommand{\Id}{\operatorname{Id}}

\DeclareMathOperator{\w}{op^w_\eps}

\DeclareMathOperator{\diag}{diag}

\DeclareMathOperator{\tr}{tr}



  \newtheorem{thm}{Theorem}[section]
  
  \newtheorem{Lemma}[thm]{Lemma}

    \newtheorem{Definition}[thm]{Definition}
  \newtheorem{remark}[thm]{Remark}

  \newtheorem{assumption}{Assumption}[section]
   \newtheorem{Question}[thm]{Question}
      \newtheorem{Example}[thm]{Example}

\usepackage{etoolbox}
\patchcmd{\subsubsection}{\itshape}{\itshape\bfseries}{}{} 

\title[]{A new look at the controllability cost of linear 
evolution systems with a long gaze at localized data }
\author[R.\ Bianchini]{Roberta Bianchini}
\address{Consiglio Nazionale delle Ricerche, Istituto per le Applicazioni del Calcolo, 00185 Rome, Italy}
\email{roberta.bianchini@cnr.it}
\author[V.\ Laheurte]{Vincent Laheurte}
\address{IMB, Universit\'e de Bordeaux, France}
\email{vincent.laheurte@math.u-bordeaux.fr}
\author[F.\ Sueur]{Franck Sueur}
\address{IMB, Universit\'e de Bordeaux, France}
\email{franck.sueur@math.u-bordeaux.fr}


%
%

\begin{document}
\maketitle
\begin{abstract}
We revisit the classical issue of the controllability/observability cost of linear first order evolution systems, starting with ODEs, before turning to some linear first order evolution PDEs in several space dimensions, including hyperbolic systems and pseudo-differential systems obtained by linearization in fluid mechanics. In particular we investigate the cost of localized initial data, and in the dispersive case, of initial data which are semi-classically microlocalized. 
\end{abstract}

\setcounter{tocdepth}{3}
\tableofcontents

\newpage

\section{Reminder on the cost of controllability of linear ODEs}
\label{sec-o}

In the setting of  linear ODEs in finite dimensions, with variable coefficients, the issue of exact controllability can be set as follows. 

\begin{Question}\label{Q1}
Given some time-dependent fields of matrices $A(t)$ and $B(t)$, a positive time $T$, some initial and final states $b_0$ and $b_T$, 
is there a time-dependent field $f(t)$ and a corresponding time-dependent field $b(t)$ such that 
\begin{equation}\label{CODE}
b'(t) +  A(t) b(t) = B(t) f(t)    \text{  for } t \in [0,T]\text{  and } ( b(0) , b(T))=(b_0, b_T) ?
\end{equation}
\end{Question}
 \par 

Above, it is understood that the dependence on the time $t$ is smooth enough for the system to make sense and that the matrices and vectors are compatible in terms of their size. 
A positive answer to  Question \ref{Q1}  means by definition that the differential system $b'(t) =  A(t) b(t)$ is \textit{exactly controllable} in time $T$.
\\  \par 

Let us observe that such a result is only non-trivial whenever the system is under-actuated, that is in the case where the  matrices $B(t)$ are not invertible. Otherwise, it is sufficient to consider any smooth field 
$b(t) $ satisfying $( b(0) , b(T))=(b_0, b_T) $, and then to define the control force field $ f(t) $ by 
\begin{equation*}
 f(t)   :=  B(t)^{ -1 }  \Big( b'(t) +  A(t) b(t) \Big)   \text{  for } t \in [0,T],
\end{equation*}
and then, the answer to   Question \ref{Q1}  is positive. 
\\  \par 

 In the case of a positive answer to  Question \ref{Q1}, one may investigate the minimal quadratic  cost of such a control result.

\begin{Question}\label{Q2}
What is the infimum $\mathcal C=\mathcal C(T,A,B,b_0,b_T)$ of 
$$ \int_0^T |f(t)  |^2 \, dt ,$$
among the control force fields $ f(t) $ which verify  Question \ref{Q1} ?
\end{Question}

The Hilbert Uniqueness Method (HUM), introduced by Jacques-Louis Lions, relates the notions of controllability and observability by means of a duality argument, including the computation of the cost, see for example \cite[Chapter 1]{coron}. 
In the context above, in the particular case where $b(T) = 0$, the so-called case of \textit{null-controllability}, 
this method yields that $\mathcal C$ is the minimal constant for which for any $b_0$, up to renaming the adjoints $A^*(t)$ and $B^* (t)$ respectively by $A(t)$ and $B(t)$, 
the unique solution to the uncontrolled equation: 
\begin{equation}\label{ODE}
b'(t) +  A(t) b(t)   = 0  \text{  for } t \in [0,T]  \text{  and }  b(0) = b_0 ,
\end{equation}
satisfies
\begin{equation}\label{ODE-obs}
|b(T)  |^2  \leq \mathcal C \int_0^T |B(t)  b(t)  |^2 \, dt .
\end{equation}
The interpretation of the inequality above is that the energy $|b(T)  |^2 $ of the final state reached by the uncontrolled system at time $T$ can be bounded by the energy 
$ \int_0^T |B(t) b(t)  |^2 \, dt $ which is observed through the field of matrices  $B(t)$ during the interval $(0,T)$, up to a constant $\mathcal C$. The minimal possible constant $\mathcal C$ in \eqref{ODE-obs} is  also called  the \textit{observability cost} of the differential system: $b'(t) + A(t) b(t) = 0$ on $(0,T)$. 
\\  \par 

Before giving a characterization of $\mathcal C$, 
 let us recall the following elementary result on differential Lyapunov equations.
\begin{Lemma}
\label{lem-lyap}
Let $A$ and $B$ some 
continuous time-dependent fields of matrices.
Then there is a unique global solution $G$ of 
 the differential Lyapunov equation: 
 $$Lyap(A,B) : \quad G'(t) -  G(t) A (t)  - A^* (t) G(t)  =   B^* (t) B (t)    \text{  for } t \in [0,T]  \text{  and }  G(0) = 0.$$
 Moreover at any time $t$, the matrix  $G(t)$ is symmetric and nonnegative.
Finally, for any time $T$, 
\begin{equation}
  \label{rep-gram}
G(T) = \int_0^T R(t,T)^* B^*(t) B(t) R(t,T) \, \dd t,
\end{equation}
 where, for any time $T$, the matrix-valued function 
 $R(\cdot,T)$ is the solution to the linear system
   $$\frac{\dd}{\dd t}R(t,T)=-A(t) R(t,T), \quad R(T,T)=\Id.$$
\end{Lemma}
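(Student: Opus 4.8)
The plan is to treat $Lyap(A,B)$ as a linear, hence globally well-posed, ODE in the finite-dimensional space of matrices, then to identify its solution with the explicit expression on the right-hand side of \eqref{rep-gram}, from which the symmetry and nonnegativity are immediate.

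First I would dispatch existence and uniqueness. Setting $\mathcal{L}_t(G) := G A(t) + A^*(t) G$, the equation reads $G'(t) = \mathcal{L}_t(G(t)) + B^*(t)B(t)$ with $G(0) = 0$. Since $t \mapsto \mathcal{L}_t$ is a continuous family of (bounded) linear maps on the space of matrices and $t \mapsto B^*(t)B(t)$ is continuous, the Cauchy--Lipschitz theorem for linear ODEs yields a unique solution $G$ defined on all of $[0,T]$, with no blow-up by linearity. Symmetry can already be read off at this stage: taking adjoints in the equation shows that $G^*$ solves the very same Cauchy problem $Lyap(A,B)$, so $G^* = G$ by uniqueness.

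Next comes the representation formula. Let $U(\cdot)$ be the fundamental matrix solving $U'(t) = -A(t)U(t)$, $U(0) = \Id$, so that $R(t,T) = U(t)U(T)^{-1}$; differentiating $U(T)^{-1}$ one gets $\partial_T U(T)^{-1} = U(T)^{-1}A(T)$, hence $\partial_T R(t,T) = R(t,T)A(T)$. Define $\wt{G}(T) := \int_0^T R(t,T)^* B^*(t)B(t) R(t,T)\, \dd t$. Differentiating under the integral sign, using $R(T,T) = \Id$ and the formula for $\partial_T R(t,T)$, one finds $\wt{G}'(T) = B^*(T)B(T) + A^*(T)\wt{G}(T) + \wt{G}(T)A(T)$ with $\wt{G}(0) = 0$, i.e. $\wt{G}$ solves $Lyap(A,B)$; by the uniqueness just established $G = \wt{G}$, which is \eqref{rep-gram}. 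Equivalently, one may conjugate the equation itself: $M(t) := U(t)^* G(t) U(t)$ satisfies $M'(t) = U(t)^* B^*(t)B(t) U(t)$ and $M(0) = 0$, so $M(t) = \int_0^t U(s)^* B^*(s)B(s) U(s)\, \dd s$, and substituting $R(s,T) = U(s)U(T)^{-1}$ gives \eqref{rep-gram} after multiplying by $(U(T)^{-1})^*$ on the left and $U(T)^{-1}$ on the right.

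Finally, nonnegativity is immediate from \eqref{rep-gram}: for any vector $v$ one has $v^* G(T) v = \int_0^T |B(t) R(t,T) v|^2\, \dd t \geq 0$, and symmetry is equally visible there; since $T$ is arbitrary the conclusion holds for every time $t$. There is no real obstacle in this lemma; the only point requiring a little care is the bookkeeping of adjoints together with the two time variables of $R(t,T)$, and passing through the $T$-independent fundamental matrix $U$ is exactly what keeps the $\partial_T$-differentiation routine.
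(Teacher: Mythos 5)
Your proof is correct and follows essentially the same route as the paper: linear Cauchy--Lipschitz for global existence and uniqueness, symmetry from the fact that $G^*$ solves the same Cauchy problem, the representation formula by verifying that the right-hand side of \eqref{rep-gram} also solves $Lyap(A,B)$ and invoking uniqueness, and nonnegativity read off directly from \eqref{rep-gram}. The additional derivation via the conjugation $M(t)=U(t)^*G(t)U(t)$ is a harmless bonus but not a genuinely different argument.
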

\begin{proof}
The existence and uniqueness of $G$ and of $R(\cdot,T)$ for any time $T$, as solutions of their respective problems, 
follows from the linear finite-dimensional version of the Cauchy-Lipschitz  theorem.
Then the symmetry follows from the  uniqueness by observing that $G$ and $G^*$ satisfy the same equations. 
Similarly, the formula \eqref{rep-gram} follows from the  uniqueness by observing that the r.h.s. satisfies  the differential Lyapunov equation. 
Finally, the nonnegativity at any time $t\in[0,T]$ of the matrix  $G(t)$ follows from this representation. 
\end{proof}

Let us now recall a characterization of 
 $\mathcal C$ as the inverse of  the minimal eigenvalue $\lambda_\text{min} ({G}(T))$ of 
the so-called  \textit{observability Gramian matrix} $G(T)$. 

\begin{thm}\label{thm0}
The observability cost $\mathcal C$ satisfies $ \mathcal C = \lambda_\text{min} ({G}(T))^{ -1}$, where $G(T)$ is the value at time $T$ of the solution $G$ to the 
differential Lyapunov equation $Lyap(A,B) $.
\end{thm}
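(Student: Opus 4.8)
The plan is to turn the family of observability inequalities \eqref{ODE-obs}, indexed by the initial datum $b_0$, into a single Rayleigh-quotient inequality for the Gramian matrix $G(T)$, and then to read off the optimal constant from the spectral theorem. First I would express the solution $b$ of \eqref{ODE} through the resolvent of Lemma \ref{lem-lyap}: the function $t \mapsto R(t,T)\, b(T)$ solves the same linear differential equation as $b$ on $[0,T]$ and takes the value $b(T)$ at $t=T$, so by uniqueness in the Cauchy--Lipschitz theorem $b(t) = R(t,T)\, b(T)$ for all $t \in [0,T]$. Since linear fundamental matrices are invertible, $R(0,T)$ is invertible, and the relation $b_0 = b(0) = R(0,T)\, b(T)$ shows that, as $b_0$ runs over the whole state space, so does the vector $v := b(T)$.

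Next I would rewrite the right-hand side of \eqref{ODE-obs} using the Gramian representation \eqref{rep-gram}:
\[
\int_0^T |B(t) b(t)|^2 \, \dd t = \int_0^T \big\langle R(t,T)^* B^*(t) B(t) R(t,T)\, v , v \big\rangle \, \dd t = \langle G(T)\, v, v\rangle .
\]
Therefore \eqref{ODE-obs} holds for every solution of \eqref{ODE} if and only if $|v|^2 \leq \mathcal C \, \langle G(T)\, v, v\rangle$ for every vector $v$, and by definition the observability cost $\mathcal C$ is the smallest such constant, that is $\mathcal C = \sup_{v \neq 0} |v|^2 / \langle G(T)\, v, v\rangle$.

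Finally, $G(T)$ is symmetric and nonnegative by Lemma \ref{lem-lyap}, so the min-max (Rayleigh quotient) characterization of eigenvalues gives $\langle G(T)\, v, v\rangle \geq \lambda_{\min}(G(T))\, |v|^2$ for all $v$, with equality when $v$ is an eigenvector associated with $\lambda_{\min}(G(T))$. Hence the supremum above equals $\lambda_{\min}(G(T))^{-1}$, which is the asserted value of $\mathcal C$. (When $\lambda_{\min}(G(T)) = 0$ the supremum is $+\infty$, consistent with the fact that \eqref{ODE-obs} then fails for any $v$ in the kernel of $G(T)$; this degenerate regime is exactly the non-observable --- equivalently, by the HUM duality, non-controllable --- case, which is excluded once we are in the situation of a positive answer to Question \ref{Q1}.)

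The argument is essentially a computation; the only points that need a little care are the bookkeeping of signs and conventions relating $Lyap(A,B)$, the resolvent $R(\cdot,T)$ and the evolution \eqref{ODE}, all of which is already packaged inside Lemma \ref{lem-lyap}, and the observation that $b(T)$ genuinely sweeps out the entire state space, which is what licenses passing from the inequalities \eqref{ODE-obs} to a single matrix inequality. I do not expect any serious obstacle beyond these routine verifications.
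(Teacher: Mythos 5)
Your proof is correct, and it reaches the same pivotal identity as the paper --- namely that the observed energy equals $\langle G(T)\,b(T),b(T)\rangle$ --- but by a genuinely different route. You obtain it in closed form by writing $b(t)=R(t,T)\,b(T)$ and substituting into the integral representation \eqref{rep-gram} of the Gramian; the paper instead introduces the ``recorded energy'' $E_R(t)=G(t)b(t)\cdot b(t)$, differentiates it using the differential Lyapunov equation $Lyap(A,B)$ and the evolution \eqref{ODE}, finds $E_R'=|Bb|^2$, and integrates. Your approach is shorter and more classical for linear ODEs, since the resolvent formula is explicit there. The paper deliberately avoids it: as it announces before the proof, the multiplier/recorded-energy mechanism is the one that survives the passage to transport equations (Theorem \ref{thm00}), to localized data (Theorem \ref{thm00loc}), and to pseudo-differential systems (Theorem \ref{PDO transport}), where an explicit resolvent formula is either unavailable or unhelpful and one must instead propagate a dual multiplier along characteristics. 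So both arguments are sound; yours is tighter in the finite-dimensional setting, while the paper's is chosen for its portability, and indeed the very computation you perform (plug in the flow, read off the Gramian) reappears in the later sections in the guise of evaluating the multiplier $M$ along the flow via \eqref{compo}. The remaining steps --- that $b(T)$ sweeps out the whole state space as $b_0$ does, and the Rayleigh-quotient characterization of $\lambda_{\min}(G(T))$ --- are handled identically in both proofs and you state them carefully, including the degenerate case $\lambda_{\min}(G(T))=0$, which the paper leaves implicit.
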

A byproduct of Theorem \ref{thm0}  is that a necessary and sufficient condition for a linear time-varying finite-dimensional control system to be observable is that the observability Gramian $G(T)$ is invertible. 
\\  \par 

Let us also mention that for time-invariant finite-dimensional control systems, this condition reduces to the famous Kalman rank condition, an algebraic condition which does not require any integration of linear differential equations.
\\  \par 

Theorem \ref{thm0}  is well-known, see for example \cite[Chapter 1]{coron}. However, we give below a proof which sets up a method which will be repeatedly used in some more difficult settings in the sequel. 

\begin{proof}[Proof of Theorem \ref{thm0}]
The starting idea is that there is an extra integral over time in   \eqref{ODE-obs} so that $ \mathcal C^{ -1}$ can be interpreted as a time. 
To restore this time in the left hand side of  \eqref{ODE-obs}, 
 we introduce the following scalar field: 
\begin{equation}\label{ODE-rec-energy}
E_R (t) :=  G(t) b(t) \cdot   b(t) , 
\end{equation}
where the dot in \eqref{ODE-rec-energy} is the usual euclidean inner product.
The quantity $E_R$ will be called  the \textit{recorded energy}. 
\\  \par 

 In particular the recorded energy starts with zero initial value, that is %
\begin{equation}\label{zero} E_R (0) = 0.
 \end{equation}
  Moreover, by Leibniz' rule and by using the equations of $G$ and $b$,  
its time derivative is given by 
 \begin{align*}
E_R'  
= 
B^*  B    b \cdot   b = |B  b  |^2 ,
\end{align*}
and then,
 \begin{align}\label{crucial}
E_R(T)  =  \int_0^T |B(t)  b(t)  |^2 \, dt .
\end{align}
Thus, at the final time $T$, the recorded energy is equal to the observed energy. 
\\  \par 

On the other hand, from the definition \ref{ODE-rec-energy}, one may compare $E_R(T) $ with the final energy $|b(T)  |^2 $ by
 \begin{align*}
\inf_{b_0 \neq 0}  \frac{ G(T) b(T) \cdot   b(T) }{ |b(T)  |^2 } 
= 
\inf_{b(T) \neq 0}  \frac{ G(T) b(T) \cdot   b(T) }{ |b(T)  |^2 } 
=
 \lambda_\text{min} (G(T)) .
\end{align*}
This concludes the proof of Theorem \ref{thm0}. 
\end{proof}
While the proof above recalls the multiplier method, see for instance \cite{komornik94}, it is interesting to reinterpret in a slightly different way these computations,  more precisely the identity  \eqref{crucial},  for the sequel. In this direction, first, 
 from \eqref{ODE}, we deduce that the rank-one matrix 
$b(t)  \otimes b(t) = b(t)  b(t)^*$, which has to be interpreted as the energy tensor, satisfies the following ODE: 
\begin{equation}\label{MODE}
(b\otimes b)' + A (b\otimes b) +   (b\otimes b) A^* =0 .
\end{equation}
Moreover, the recorded energy reads
\begin{equation}\label{ODE-rec-energy-ma}
E_R  =  G : (b \otimes  b) , 
\end{equation}
where the notation $:$ stands for the Frobenius product, that is for two squares matrices $A$ and $B$ of the same size, 
$$A:B = \tr (A^* B) = B:A.$$
Recalling the following interplay between the Frobenius product and the matrix product:
\begin{equation} \label{rules}
 A : (BC) 
=
 (A C^*) : B
\quad   \text{ and }   \quad 
 A : (CB)
=
 (C^*A ) : B, 
 \end{equation}
we observe that the differential Lyapunov equation $Lyap(A,B)$ 
 is the dual equation of \eqref{MODE} with source term $ B^* B $, meaning that if one tests the differential Lyapunov  equation $Lyap(A,B)$ 
 with a time-dependent  matrix $C(t)$ by means of the Frobenius product and if one integrates by parts in time, then 
\begin{equation}
 \int_0^T G   : (C' +   AC + CA^* ) = (G : C)(T) - \int_0^T B^*  B   : C .
\end{equation}
This way, the identity  \eqref{crucial} follows from choosing $C= b\otimes b$, recalling \eqref{zero}, \eqref{MODE} and \eqref{ODE-rec-energy-ma}.

\section{Case of a transport PDE}\label{sec:transport}

Let us now turn to the case of a simple PDE: the following linear transport equation:
\begin{equation}\label{toy model}
\de_t\bu + (v\cdot\nabla)\bu + A \bu =  0,  \text{  for } t \in [0,T],
\end{equation}
where the space variable is $x\in \R^d$ with  $d \in \mathbb{N}^*$, 
$v:\R^d \rightarrow \R^d$ is a given regular vector field and 
 $A$ is a given regular field of ${N\times N}$ matrices depending  on $x \in \R^d$, with $N\in\N^*$. These matrices encode some damping or amplification along the transportation by $v$.
The final time $T>0$ is also given and 
 the unknown is the  vector field $\bu:[0,T] \times \R^d\to\R^N$. 
 In this case, the cost is defined as follows.
\begin{Definition}\label{def-cocost}
The \textit{cost of observability} by a regular field of  $n\times N$ matrices $B$  depending  on $x \in \R^d$ 
 is the minimal constant $ \mathcal C=\mathcal C(T,v,A,B)$ such that for any initial condition $\bu_0\in L^2(\R^d)$, the associated solution $\bu(t)$ to  \eqref{toy model} 
 verifies the observability inequality:
 \begin{equation}\label{obs ineq}
 \|\bu (T,\cdot)\|_{L^2(\R^d; \R^N)}^2 \leq \mathcal{C} \int_0^T \|B\bu(t)\|_{L^2(\R^d; \R^n)}^2 \dd t .
\end{equation}
\end{Definition}
%
%
  Let us highlight that, in this setting, 
  the HUM method, which was mentioned in the previous section, still holds and the identification of  $ \mathcal C$ with  the cost of controllability of the adjoint system as well. 

The natural question that arises is whether one can give a condition for the observability of system \eqref{toy model} and compute its cost, as done in Theorem \ref{thm0} for ODE systems.
  \\ \par
  The following result, generalizing Theorem \ref{thm0}, shows that $\mathcal C$ can be computed in terms of the infimum over $x_0 \in \R^d$
  of the minimal eigenvalues $ \lambda_\text{min} (G_{x_0}(T))$ of a family of Gramian matrices $G_{x_0}(T)$ 
associated with  time-dependent Lyapunov ODEs involving the coefficients of the PDE  \eqref{toy model} evaluated  along the characteristic flow induced by the vector field $v$. 
This flow is denoted by $X_{x_0} (t)$ and satisfies, for  any ${x_0} \in \R^d$, 
 \begin{equation}\label{flow}
 X'_{x_0} (t) = v(X_{x_0} (t))   \text{  for } t \in [0,T]\text{  and  }  X_{x_0} (0) = {x_0} .
\end{equation}
On the other hand, the Gramian matrices $G_{x_0}$ are given as solutions to $Lyap(A_{{x_0}},B_{x_0})$ 
where, for any ${x_0} \in \R^d$ and for any $t \in [0,T]$,
 \begin{align} \label{coeff-x}
A_{{x_0}} (t)  :=   A(X_{x_0} (t))  -   \frac12  (\Div v)(X_{x_0} (t))  \Id_N   \quad  \text{  and  }    \quad 
B_{x_0} (t)  := B(X_{x_0} (t)) .
\end{align}
\begin{thm}\label{thm00}
The observability cost $\mathcal C$ satisfies 
\begin{equation}\label{eq:cost-sec2}
\mathcal C = \big(\inf_{{x_0} \in \R^d} \, \lambda_\text{min} (G_{x_0}(T)\big)^{ -1}.
\end{equation}
\end{thm}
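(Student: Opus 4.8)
The plan is to transport the PDE along the characteristic flow $X_{x_0}$ in order to reduce the observability inequality \eqref{obs ineq} to a family (parametrized by $x_0\in\R^d$) of ODE observability inequalities of the type treated in Theorem \ref{thm0}, and then to patch these together. First I would introduce, for each $x_0$, the curve $\bm{b}_{x_0}(t) := J_{x_0}(t)^{1/2}\,\bu(t,X_{x_0}(t))$, where $J_{x_0}(t) := \det(DX_{x_0}(t))$ is the Jacobian of the flow, which solves the linear transport ODE $J'_{x_0} = (\Div v)(X_{x_0})\,J_{x_0}$. A direct computation using the chain rule on $\eqref{toy model}$ shows that $\bm{b}_{x_0}$ solves precisely the uncontrolled ODE $\bm{b}_{x_0}'(t) + A_{x_0}(t)\bm{b}_{x_0}(t)=0$ with the coefficients $A_{x_0}$ of \eqref{coeff-x}: the $-\tfrac12(\Div v)\Id_N$ term is exactly what the $J_{x_0}^{1/2}$ weight produces. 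Moreover the weight is chosen so that $L^2$ norms are preserved by the change of variables $x = X_{x_0}(t)$: for any reasonable function $g$,
\begin{equation*}
\int_{\R^d}|g(x)|^2\,\dd x = \int_{\R^d}|g(X_{x_0}(t))|^2\,J_{x_0}(t)\,\dd x_0 = \int_{\R^d}|\bm{b}_{x_0}(t)|^2\,\dd x_0
\end{equation*}
when $g = \bu(t,\cdot)$, and similarly with $B$ inserted, giving $\int_{\R^d}|B\bu(t)|^2\,\dd x = \int_{\R^d}|B_{x_0}(t)\bm{b}_{x_0}(t)|^2\,\dd x_0$.

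With these identities, the PDE observability inequality \eqref{obs ineq} becomes, after integrating the pointwise-in-$x_0$ ODE observability inequality of Theorem \ref{thm0},
\begin{equation*}
\int_{\R^d}|\bm{b}_{x_0}(T)|^2\,\dd x_0 \ \leq\ \mathcal{C}\int_0^T\!\!\int_{\R^d}|B_{x_0}(t)\bm{b}_{x_0}(t)|^2\,\dd x_0\,\dd t ,
\end{equation*}
and I would argue that the best constant here equals $\sup_{x_0}\lambda_\text{min}(G_{x_0}(T))^{-1} = \big(\inf_{x_0}\lambda_\text{min}(G_{x_0}(T))\big)^{-1}$. The inequality $\mathcal{C}\leq\big(\inf_{x_0}\lambda_\text{min}(G_{x_0}(T))\big)^{-1}$ is immediate: apply Theorem \ref{thm0} for each fixed $x_0$ with constant $\lambda_\text{min}(G_{x_0}(T))^{-1}\leq\big(\inf\lambda_\text{min}\big)^{-1}$ and integrate in $x_0$. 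For the reverse (optimality), given $\delta>0$ pick $x_0^*$ with $\lambda_\text{min}(G_{x_0^*}(T))<\inf_{x_0}\lambda_\text{min}(G_{x_0}(T))+\delta$ and a unit eigenvector; then build an initial datum $\bu_0$ concentrated near $x_0^*$ (a rescaled bump times the transported eigenvector) and check that it nearly saturates the inequality, using continuity in $x_0$ of the flow, of the coefficients, and hence of $G_{x_0}(T)$.

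The main obstacle I anticipate is this last concentration/optimality step: one must show that the infimum over $x_0$ is genuinely attained in the limit by localized data, which requires (i) uniform-in-$x_0$ control of the flow $X_{x_0}$ and its Jacobian on $[0,T]$ (so that the change of variables and the $L^2$ identities are valid and stable), (ii) continuity of $x_0\mapsto G_{x_0}(T)$, which follows from continuous dependence of ODE solutions on parameters applied to $Lyap(A_{x_0},B_{x_0})$, and (iii) a careful estimate that the "crossed" and boundary terms arising from the bump not being exactly an eigenfunction vanish as the concentration scale tends to $0$. Regularity of $v$, $A$, $B$ and, if needed, a mild non-degeneracy (e.g. the flow being well-defined and the Jacobian bounded away from $0$ and $\infty$ on $[0,T]$) should make all three points routine; the bookkeeping in (iii) is where the real work lies. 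Once this is in place, \eqref{eq:cost-sec2} follows.
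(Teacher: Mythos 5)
Your proposal is correct and arrives at the same formula, but it takes a genuinely different route from the paper's. The paper keeps everything Eulerian: it pulls the Gramians back to a field of matrices $M(t,x)$ via $G_x(t)=M(t,X_x(t))$, observes that $M$ solves the dual PDE \eqref{toy multi} with source $B^*B$ and $M(0,\cdot)=0$, invokes the duality Lemma \ref{lem-duali1} to get $E_R(T)=\int_0^T\|B\bu\|_{L^2}^2$, and then closes with Lemma \ref{min int} applied \emph{at time $T$}, using that $\bu_0\mapsto\bu(T,\cdot)$ and $x\mapsto X_x(T)$ are bijections. You instead go fully Lagrangian: the weighted composition $\bm b_{x_0}(t)=J_{x_0}(t)^{1/2}\bu(t,X_{x_0}(t))$ turns the PDE into a continuum of decoupled ODEs $\bm b_{x_0}'+A_{x_0}\bm b_{x_0}=0$, the $L^2$ norms become fibred integrals over $x_0$, and Theorem \ref{thm0} is applied fibrewise. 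Your computation is correct (the $-\tfrac12(\Div v)\Id$ term in $A_{x_0}$ is precisely produced by $\tfrac{\dd}{\dd t}J_{x_0}^{1/2}$), and the reduction is arguably more transparent here. What you gain is elementariness; what the paper's multiplier formulation buys is portability: in the pseudo-differential sections there is no characteristic flow to change variables along, and the duality identity $\int W:M$ is what survives, so the Eulerian formulation is not optional there.

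Two small remarks on your optimality step (iii), which is where you correctly anticipate the work. First, it can be made essentially free by working at time $T$ rather than time $0$: since the solution map $\bu_0\mapsto\bu(T,\cdot)$ is a bijection on $L^2$, you may directly choose the final state $\bu(T,\cdot)$ to be a Gaussian bump centred at $y^*:=X_{x_0^*}(T)$ and aligned with a unit eigenvector of $G_{x_0^*}(T)$ for $\lambda_{\min}$, which avoids transporting eigenvectors and the boundary/crossed-term estimates you flag. Second, the precise content of that concentration limit is exactly the paper's Lemma \ref{min int} (with $m(x)=M(T,x)$): it packages points (ii) and (iii) into the identity
\begin{equation*}
\inf_{\|u\|_{L^2}=1}\int_{\R^d} m(x)u(x)\cdot u(x)\,\dd x=\inf_{x\in\R^d}\lambda_{\min}(m(x)),
\end{equation*}
proved by the same Gaussian bump computation you have in mind. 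With that lemma your plan closes cleanly; there is no gap.
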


 A byproduct of Theorem \ref{thm00}  is that a necessary and sufficient condition for  \eqref{toy model}   to be observable by the  field of  matrices $B$ is that the  Gramian matrices $(G_{x_0}(T))_{{x_0} \in \R^d}$ are uniformly invertible.

To prove Theorem \ref{thm00} we will mimick the proof of Theorem \ref{thm0} by 
introducing a recorded energy to which we will apply the following lemma, where we assume that $M$ and $F$ are regular enough.
\begin{Lemma}
\label{lem-duali1}
Let $\bu(t)$ be a solution of  \eqref{toy model}  associated with the initial condition $\bu_0\in L^2(\R^d)$. 
Let  $M$  satisfying the following PDE
\begin{equation} \label{toy multi}
\de_t M+(v\cdot\nabla)M + (\Div v)M-A^*M-MA= F ,
\end{equation}
where   $F(t,x)$ is  a time-dependent family of matrices.
    Let 
    \begin{align}\label{def-em}
    E[M,u] :=  \int_{\R^d}  (\bu \otimes \bu): M   .
\end{align}
Then its time derivative satisfies  for all $t\in [0,T]$,
\begin{align*}
    E[M,u]' =    \int_{\R^d}  (\bu \otimes \bu): F   .
    \end{align*}
\end{Lemma}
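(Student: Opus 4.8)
The plan is to compute the time derivative of $E[M,\bu]$ directly by differentiating under the integral sign, using the two evolution equations \eqref{toy model} and \eqref{toy multi}, and then to absorb every term that is \emph{not} the source contribution $\int (\bu\otimes\bu):F$ into a single exact divergence in $x$, which integrates to zero over $\R^d$. First I would record that $(\bu\otimes\bu)$ solves a transport-type equation dual to the one for $M$: from \eqref{toy model} one gets, at least formally,
\begin{equation*}
\de_t(\bu\otimes\bu) + (v\cdot\nabla)(\bu\otimes\bu) + A(\bu\otimes\bu) + (\bu\otimes\bu)A^* = 0,
\end{equation*}
which is the PDE analogue of \eqref{MODE} (here $A=A(x)$ is $x$-dependent but does not interfere since it is not differentiated). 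This is the exact parallel of the ODE computation in Section~\ref{sec-o}.

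Next I would differentiate $E[M,\bu]=\int_{\R^d}(\bu\otimes\bu):M$ in time, plug in the equation just above for $\de_t(\bu\otimes\bu)$ and equation \eqref{toy multi} for $\de_t M$, and collect terms. The algebraic terms involving $A$ and $A^*$ cancel pairwise thanks to the Frobenius-product identities \eqref{rules}: $\big(A(\bu\otimes\bu)\big):M = (\bu\otimes\bu):(A^*M)$ and $\big((\bu\otimes\bu)A^*\big):M = (\bu\otimes\bu):(MA)$, matching exactly the $-A^*M-MA$ on the left of \eqref{toy multi} with the $A(\bu\otimes\bu)+(\bu\otimes\bu)A^*$ coming from the first equation. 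What survives, besides $\int(\bu\otimes\bu):F$, are the transport terms
\begin{equation*}
-\int_{\R^d}\big((v\cdot\nabla)(\bu\otimes\bu)\big):M \;-\;\int_{\R^d}(\bu\otimes\bu):\big((v\cdot\nabla)M\big)\;-\;\int_{\R^d}(\bu\otimes\bu):\big((\Div v)M\big).
\end{equation*}
The point is that $(v\cdot\nabla)\big((\bu\otimes\bu):M\big) = \big((v\cdot\nabla)(\bu\otimes\bu)\big):M + (\bu\otimes\bu):\big((v\cdot\nabla)M\big)$, and $(v\cdot\nabla)\phi + (\Div v)\phi = \Div(\phi\,v)$ for any scalar $\phi$; applying this with $\phi=(\bu\otimes\bu):M$ shows the three surviving terms together equal $-\int_{\R^d}\Div\big(((\bu\otimes\bu):M)\,v\big)\,\dd x = 0$. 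Hence $E[M,\bu]'=\int_{\R^d}(\bu\otimes\bu):F$, as claimed. This is precisely why the combination $(\Div v)M$ appears in \eqref{toy multi}: it is the term needed to turn the transport part into a perfect spatial divergence, just as the $-\tfrac12(\Div v)\Id_N$ shift in \eqref{coeff-x} makes the characteristic-flow Lyapunov equation the right one.

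The main obstacle is not the algebra but the justification of these manipulations: differentiation under the integral sign, the integration by parts (i.e.\ that the divergence integrates to zero with no boundary contribution at infinity), and the fact that $\bu(t)\in L^2$ so that $\bu\otimes\bu\in L^1$ and all pairings make sense. I would handle this under the blanket regularity hypothesis stated before the lemma ("we assume that $M$ and $F$ are regular enough"): assuming $v$, $A$ smooth with bounded derivatives so that the transport semigroup preserves, say, $H^s\cap L^1$-type spaces, and $M$, $F$ bounded with enough decay (or working first with $\bu_0$ smooth and compactly supported, then passing to the limit by density in $L^2$ using the continuity of both sides in $\bu_0$). With that in place, the computation above is rigorous and the lemma follows.
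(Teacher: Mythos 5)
Your proof is correct and follows essentially the same route as the paper: Leibniz' rule, substitution of the transport equation for $\bu$ (equivalently, the equation for the energy tensor $\bu\otimes\bu$), cancellation of the $A$-terms via the Frobenius identities \eqref{rules}, and absorbing the remaining transport terms into an exact divergence $\Div\big(((\bu\otimes\bu):M)\,v\big)$ that integrates to zero. The paper's one-line proof ("Leibniz' rule, \eqref{toy model} twice, integration by parts and a transposition") is exactly this computation written more tersely, with your intermediate equation for $\bu\otimes\bu$ appearing explicitly later as \eqref{tensor-E}.
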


\begin{proof}
By Leibniz' rule,  \eqref{toy model} 
twice, integration by parts  and a transposition, 
we obtain that the  time derivative $E[M,u]'$ of $E[M,u]$ 
is given by 
 \begin{align} \label{lab}
E[M,u]'  &=   \int_{\R^d} \Big(
\de_t M+(v\cdot\nabla)M + (\Div v)M-A^*M-MA \Big) \bu \cdot \bu \, dx   .
\end{align}
Then it is sufficient to use \eqref{toy multi} to conclude.
\end{proof}
Another elementary ingredient in the proof of Theorem \ref{thm00} is the following result. 
 \begin{Lemma}\label{min int}
 Let $m:\R^d\to \R^{N\times N}$ be a continuous field of symmetric non negative matrices. Then $$\inf_{\|u\|_{L^2}=1}\int_{\R^d}m(x)u(x)\cdot u(x)\,\dd x=\inf_{x\in\R^d}\lambda_{\rm{min}}(m(x)),$$
 where the infimum on the left hand side is taken over all the vector fields $u$ in  $L^2(\R^d)$
 with $\|u\|_{L^2}=1$.
 \end{Lemma}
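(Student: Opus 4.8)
The plan is to establish the two inequalities separately. Set $\ell:=\inf_{x\in\R^d}\lambda_{\min}(m(x))$; since each $m(x)$ is symmetric and nonnegative, $\ell\geq 0$, so in particular $\ell$ is finite. Throughout, $\lambda_{\min}(\cdot)$ denotes the smallest eigenvalue of a symmetric matrix.

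For the bound $\inf_{\|u\|_{L^2}=1}\int_{\R^d}m(x)u(x)\cdot u(x)\,\dd x\geq \ell$ I would argue pointwise. For a.e.\ $x$, the Rayleigh quotient characterization of the smallest eigenvalue of the symmetric matrix $m(x)$ gives $m(x)u(x)\cdot u(x)\geq \lambda_{\min}(m(x))\,|u(x)|^2\geq \ell\,|u(x)|^2$. Integrating over $\R^d$ and using $\|u\|_{L^2}=1$ yields $\int_{\R^d}m(x)u(x)\cdot u(x)\,\dd x\geq \ell$ (the left-hand side being $+\infty$ is allowed and only helps), and taking the infimum over admissible $u$ gives this direction. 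This step is routine.

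For the reverse inequality I would use localized test functions. Fix $\eps>0$ and, by definition of the infimum, choose $x_0\in\R^d$ with $\lambda_{\min}(m(x_0))<\ell+\eps$; let $e_0\in\R^N$ be a unit eigenvector of $m(x_0)$ for that eigenvalue, so that $m(x_0)e_0\cdot e_0=\lambda_{\min}(m(x_0))<\ell+\eps$. Since $m$ is continuous, the scalar map $x\mapsto m(x)e_0\cdot e_0$ is continuous, hence there is $r>0$ such that $m(x)e_0\cdot e_0<\ell+2\eps$ for all $x$ in the ball $B(x_0,r)$. Choose any scalar $\phi\in L^2(\R^d)$ with $\supp\phi\subset B(x_0,r)$ and $\|\phi\|_{L^2}=1$, and set $u:=\phi\,e_0$; then $\|u\|_{L^2}=1$ and $\int_{\R^d}m(x)u(x)\cdot u(x)\,\dd x=\int_{\R^d}\phi(x)^2\big(m(x)e_0\cdot e_0\big)\,\dd x\leq \ell+2\eps$. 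Letting $\eps\to 0$ shows the left-hand side infimum is $\leq\ell$, and combined with the first part this proves the claimed equality.

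I do not expect a genuine obstacle here. The only points requiring a little care are: that we need continuity of $m$ only (not of $x\mapsto\lambda_{\min}(m(x))$), which is used to localize $x\mapsto m(x)e_0\cdot e_0$; and that the competitor vector fields are merely required to lie in $L^2(\R^d)$, so the construction $u=\phi e_0$ with $\phi$ compactly supported is admissible (and makes the integral automatically finite there).
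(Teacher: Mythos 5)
Your proof is correct and follows essentially the same route as the paper: a pointwise Rayleigh-quotient bound $m(x)u(x)\cdot u(x)\geq \lambda_{\rm{min}}(m(x))\,|u(x)|^2$ gives one inequality, and for the other you localize a unit-norm test field near a point where $\lambda_{\rm{min}}(m(\cdot))$ is within $\eps$ of the infimum, pointing in the direction of the corresponding eigenvector. The only cosmetic difference is that you take an arbitrary compactly supported $\phi$ on a small ball where $m(x)e_0\cdot e_0<\ell+2\eps$ (using continuity of $x\mapsto m(x)e_0\cdot e_0$), whereas the paper uses normalized Gaussians $u_\eps(x)=(\pi\eps)^{-d/4}b_\eps e^{-|x-x_\eps|^2/(2\eps)}$ and lets $\eps\to 0$.
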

\begin{proof}
Let 
$$C:=\inf_{x\in\R^d}\lambda_{\rm{min}}(m(x)).$$
On the one hand, 
for any $u \in L^2(\R^d)$, 
for any $x\in\R^d$, 
  \begin{equation*}
  \label{crmin} m(x)u(x)\cdot u(x) \ge  C |u(x)|^2,\end{equation*}
  so that, by integration, 
\begin{equation}\label{c min}\int_{\R^d}m(x)u(x)\cdot u(x)\,\dd x\ge C \|u\|_{L^2}.\end{equation}
On the other hand, for any $\eps>0$, we choose $x_\eps\in\R^d$ such that $\lambda_{\rm{min}}(m(x_\eps))\le C+\eps$, as well as $b_\eps\in \mathcal S^{N-1}$ an eigenvector of $m(x_\eps)$ associated with $\lambda_{\rm{min}}(m(x_\eps))$, so that 
 \begin{equation*}
 m(x_\eps) b_\eps = \lambda_{\rm{min}}(m(x_\eps)) b_\eps  .
 \end{equation*}
Finally, we set, for any $x\in\R^d$, 
$$u_\eps(x):=(\pi\eps)^{-\frac d4} b_\eps e^{-\frac{|x-x_\eps|^2}{2\eps}}.$$Then, for any $\eps>0$, $\|u_\eps\|_{L^2}=1$ and, since $m$ is continuous, 
$$\lim_{\eps\to0}\int_{\R^d}m(x)u_\eps(x)\cdot u_\eps(x)\,\dd x=C.$$
Combining this equality with \eqref{c min}, we conclude the proof of the lemma.
\end{proof}

We are now equipped to start the proof of Theorem \ref{thm00}.
\begin{proof}[Proof of Theorem \ref{thm00}]
Let $M$ be given by  setting for any $t$ and $x$, 
\begin{equation}\label{compo}
 G_x(t) =  M(t,X_x (t)).
\end{equation}
We highlight  that, for any $t$,  the mapping $x \mapsto X_x (t)$ is invertible, since its Jacobian  satisfies a linear first order  ODE with $1$ as initial condition. 
Then we observe that it follows from the fact that  the Gramian matrices $G_{x}$ are  solutions to $Lyap(A_{{x}},B_{x})$ and from the chain rule that 
$M(t,x)$ is a nonnegative symmetric matrix for any $t$ and $x$, that  $M$ satisfies 
an equation 
of the type \eqref{toy multi}  with $F = B^*B $ and that $M(0,\cdot)=0$. 
Therefore, by the  lemma \ref{lem-duali1} and an integration on $[0,T]$, 
the recorded energy defined by
$ E_R  := E[M,u]$
 satisfies 
 \begin{align}\label{toy estim}
E_R(T) 
  &= \int_0^T  \int_{\R^d} |B\bu(t,x)|^2 \, \dd x  \dd t.
\end{align}
%

On the other hand, 
 since the map $\bu_0\mapsto \bu(T,\cdot)$ is a bijection from $L^2(\R^d)$ to itself, 
 by Lemma \ref{min int}, we obtain 
 \begin{equation}
     \label{agaver}
 \inf_{\bu_0\ne0}\frac{ E_R(T)}
 {\|\bu(T,\cdot)\|_{L^2}^2}=\inf_{x\in\R^d}\lambda_{\rm{min}}(M(T,x))
 =\inf_{x\in\R^d}\lambda_{\rm{min}}(G_x(T)),
 \end{equation}
  since  $x\mapsto X_x(T)$ is a bijection from $\R^d$ to itself.
 We conclude the proof of Theorem \ref{thm00} by gathering 
  \eqref{toy estim}
 and
 \eqref{agaver}.
\end{proof}

\section{A localization in space and amplitude of the  cost}
\label{sec-local}

While the definition of observability cost is rather \emph{global} in space,  we consider below the observability cost 
 \begin{equation}
  \label{costu}
 \mathcal C[\bu_0] := 
  \frac{     \|\bu(T,\cdot)\|_{L^2}^2 }{   \int_0^T\|B\bu(t)\|_{L^2}^2\,\dd t  } ,
  \end{equation}
 of the  initial data $\bu_0$ (at time $T$) 
whose  local energy tensor  $\bu_0 \otimes \bu_0$  are \emph{localized}, for the weak-* topology of the space  $\mathcal M (\R^d ; \R^{N \times N})$ of the matrix-valued measures, at $(b_0 \otimes b_0) \delta_{x_0}$, for some 
 $x_0 \in \R^d$ and $b_0 \in \R^N$. 
Above $\bu$ is the  solution 
 of the transport PDE \eqref{toy model} associated with $\bu_0$.
Since it is a linear equation, the cost $\mathcal C[\bu_0]$ is homogeneous of degree $0$ with respect to the   initial data $\bu_0$. 

\begin{thm}\label{thm00loc}
Let $x_0 \in \R^d$ and set  $G_T := G_{x_0} (T) $, where $ G_{x_0}$ is the solution to the differential Lyapounov equation $Lyap(A_{x_0},B_{x_0})$, where we recall the notation \eqref{coeff-x}.
Let  $b_0 \in \R^N \setminus \{0\}$ and  denote by  
$$ b_T^\mathcal{N} :=  \frac{ b_{x_0,b_0}(T) }{ |    b_{x_0,b_0}(T) |} ,$$
where 
 $b_{x_0,b_0}\in\R^N \setminus \{0\}$ is the solution 
 of 
 the linear ordinary differential equation: 
\begin{equation}
  \label{bxeq}
 b_{x_0,b_0}'  = -A_{x_0}  b_{x_0,b_0}  ,  \quad  \text{ with }  \quad b_{x_0,b_0}(0)=b_0 .
 \end{equation}
Let $(\bu_0^n)_n$ be a sequence in  $C^\infty(\R^d)\cap L^2(\R^d ;  \R^N)$ such that 
\begin{equation}
  \label{hyp_0}
\bu_0^n \otimes \bu_0^n 
 \rightharpoonup (b_{x_0,b_0} \otimes b_{x_0,b_0}) \delta_{x_0} \quad   \text{ weak-* in  }  \,  \mathcal M (\R^d ; \R^{N \times N}) , \quad   \text{ as } \, n \rightarrow +\infty .
  \end{equation}
 Then,  
 $$\mathcal C[\bu_0^n]^{-1}   \rightarrow (b_T^\mathcal{N})^*  \, G_T \,  b_T^\mathcal{N}  ,\quad   \text{ as } \, n \rightarrow +\infty .$$
\end{thm}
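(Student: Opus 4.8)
The plan is to imitate the proof of Theorem \ref{thm00}, but, since we no longer minimize over all initial data, to keep track of the measure carried by the energy tensor rather than of an eigenvalue. Two preliminary remarks first. I read \eqref{hyp_0} as concentration onto $(b_0\otimes b_0)\delta_{x_0}$ with $b_0:=b_{x_0,b_0}(0)$, and, since the tensors $\bu_0^n\otimes\bu_0^n$ are nonnegative and the limiting measure has total mass $|b_0|^2$, I will use that the convergence is \emph{tight}, i.e. $\|\bu_0^n\|_{L^2}^2\to|b_0|^2$, so that no $L^2$-mass escapes to spatial infinity (this should either be part of the hypothesis or follow from the way such sequences are built, as in the Gaussian example of Lemma \ref{min int}). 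Also, $b_{x_0,b_0}(T)\ne 0$, being the value at $T$ of the solution of the linear ODE \eqref{bxeq}, so $b_T^{\mathcal N}$ and $|b_{x_0,b_0}(T)|^2>0$ are meaningful.

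\textbf{Step 1 (numerator and denominator as pairings at $t=0$).} By Lemma \ref{lem-duali1}, if $\Theta$ solves the adjoint equation \eqref{toy multi} with source $F$, then $E[\Theta,\bu^n](T)-E[\Theta,\bu^n](0)=\int_0^T\!\int_{\R^d}(\bu^n\otimes\bu^n):F\,\dd x\,\dd t$. I apply this twice. Taking $P$ the solution of \eqref{toy multi} with $F=0$ and $P(T,\cdot)=\Id$, the quantity $E[P,\bu^n]$ is constant in time, hence
\begin{equation*}
\|\bu^n(T)\|_{L^2}^2=E[P,\bu^n](T)=E[P,\bu^n](0)=\int_{\R^d}(\bu_0^n\otimes\bu_0^n):P(0,\cdot)\,\dd x .
\end{equation*}
Taking $Q$ the solution of \eqref{toy multi} with $F=B^*B$ and $Q(T,\cdot)=0$, and using $(\bu\otimes\bu):B^*B=|B\bu|^2$, we get $E[Q,\bu^n](T)=0$ and
\begin{equation*}
\int_0^T\|B\bu^n(t)\|_{L^2}^2\,\dd t=-E[Q,\bu^n](0)=-\int_{\R^d}(\bu_0^n\otimes\bu_0^n):Q(0,\cdot)\,\dd x .
\end{equation*}
The fields $P(0,\cdot)$ and $Q(0,\cdot)$ are bounded and continuous (backward transport problems with regular coefficients on the finite interval $[0,T]$), so tightness of \eqref{hyp_0} allows me to pass to the limit: $\|\bu^n(T)\|_{L^2}^2\to (b_0\otimes b_0):P(0,x_0)$ and $\int_0^T\|B\bu^n(t)\|_{L^2}^2\,\dd t\to -(b_0\otimes b_0):Q(0,x_0)$.

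\textbf{Step 2 (identification along the characteristic, and conclusion).} Exactly as for \eqref{compo}, composing \eqref{toy multi} with the flow $X_{x_0}$ and using \eqref{coeff-x}, one checks that $s\mapsto P(s,X_{x_0}(s))$ solves $Y'=A_{x_0}^*Y+YA_{x_0}$ with $Y(T)=\Id$, and $s\mapsto Q(s,X_{x_0}(s))$ solves the same equation with source $B_{x_0}^*B_{x_0}$ and $Y(T)=0$. Solving by Duhamel with the resolvent $R_{x_0}$ of $-A_{x_0}$ as in Lemma \ref{lem-lyap} — for which $R_{x_0}(r,0)b_0=b_{x_0,b_0}(r)$ by \eqref{bxeq} — gives $P(0,x_0)=R_{x_0}(T,0)^*R_{x_0}(T,0)$ and $Q(0,x_0)=-\int_0^T R_{x_0}(r,0)^*B_{x_0}(r)^*B_{x_0}(r)R_{x_0}(r,0)\,\dd r$, so that, using $(w\otimes w):S=w^*Sw$,
\begin{equation*}
(b_0\otimes b_0):P(0,x_0)=|b_{x_0,b_0}(T)|^2,\qquad -(b_0\otimes b_0):Q(0,x_0)=\int_0^T|B_{x_0}(r)b_{x_0,b_0}(r)|^2\,\dd r .
\end{equation*}
By \eqref{rep-gram} applied to $Lyap(A_{x_0},B_{x_0})$, together with $R_{x_0}(r,T)b_{x_0,b_0}(T)=R_{x_0}(r,0)b_0=b_{x_0,b_0}(r)$, the last integral equals $b_{x_0,b_0}(T)^*G_Tb_{x_0,b_0}(T)$. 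Writing $b_{x_0,b_0}(T)=|b_{x_0,b_0}(T)|\,b_T^{\mathcal N}$, we conclude
\begin{equation*}
\mathcal C[\bu_0^n]^{-1}=\frac{\int_0^T\|B\bu^n(t)\|_{L^2}^2\,\dd t}{\|\bu^n(T)\|_{L^2}^2}\longrightarrow\frac{b_{x_0,b_0}(T)^*G_Tb_{x_0,b_0}(T)}{|b_{x_0,b_0}(T)|^2}=(b_T^{\mathcal N})^*G_Tb_T^{\mathcal N}.
\end{equation*}

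The algebra in Steps 1--2 is routine — it is essentially the computation behind \eqref{compo} and \eqref{rep-gram} run along the single characteristic through $x_0$ (indeed $Q$ is the backward-in-time counterpart of the matrix field $M$ constructed in the proof of Theorem \ref{thm00}, and one could alternatively propagate $\bu^n\otimes\bu^n$ to the final time and pair it with $M(T,\cdot)$). The point I expect to be delicate is the limit passage in Step 1: $P(0,\cdot)$ and $Q(0,\cdot)$ are bounded but \emph{not} compactly supported, so pairing $\bu_0^n\otimes\bu_0^n$ against them genuinely requires the concentration \eqref{hyp_0} to be tight, not merely weak-$*$ — equivalently, that no $L^2$-mass of $\bu_0^n$ leaks to spatial infinity. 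Without this, only the one-sided estimate $\liminf_n\|\bu^n(T)\|_{L^2}^2\ge|b_{x_0,b_0}(T)|^2$ and its analogue for the denominator survive, and the statement would have to be amended; one should also record the standing regularity (and growth) hypotheses on $v,A,B$ that make the flow global on $[0,T]$ and the backward problems defining $P$ and $Q$ well posed with bounded solutions.
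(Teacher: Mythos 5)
Your proof is correct, and it takes a genuinely different — in fact, dual — route from the paper's. The paper propagates the energy tensor $\mu^n:=\bu^n\otimes\bu^n$ \emph{forward} to time $T$: it uses the stability part of Lemma~\ref{lem-tr-mu} to show $\mu^n(T,\cdot)\rightharpoonup^* (b_{x_0,b_0}(T)\otimes b_{x_0,b_0}(T))\delta_{X_{x_0}(T)}$, and then passes to the limit in the time-$T$ pairings \eqref{palab1}--\eqref{palab2} against $M(T,\cdot)$ and $\Id$. You instead propagate the multipliers \emph{backward} from $T$ to $0$ (your $P$ and $Q$ solving \eqref{toy multi} with final data $\Id$, resp.\ $0$), and test the initial data $\bu_0^n\otimes\bu_0^n$ directly against $P(0,\cdot)$ and $Q(0,\cdot)$. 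This is the adjoint of the paper's argument. What your route buys: you avoid Lemma~\ref{lem-tr-mu} entirely (no need for existence/uniqueness/stability of measure-valued transport), relying only on Lemma~\ref{lem-duali1} and the explicit ODE solutions along the characteristic through $x_0$; the algebra in your Step~2 correctly recovers $(b_0\otimes b_0):P(0,x_0)=|b_{x_0,b_0}(T)|^2$ and $-(b_0\otimes b_0):Q(0,x_0)=b_{x_0,b_0}(T)^*G_T\,b_{x_0,b_0}(T)$, matching \eqref{rep-gram}.

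Your remark about tightness is a fair observation, but it is not a defect of your proof relative to the paper's: the identical issue is present in the paper's final step, since $\Id$ (in \eqref{palab2}) and $M(T,\cdot)$ (in \eqref{palab1}) are bounded continuous but do not vanish at infinity, so the mere weak-$*$ convergence $\mu^n(T,\cdot)\rightharpoonup^*\mu(T,\cdot)$ in $\mathcal M=(C_0)'$ does not immediately justify the passage to the limit either. Both proofs implicitly read \eqref{hyp_0} as concentration without mass escaping to infinity (equivalently, convergence against $C_b$, or weak-$*$ plus convergence of total variations $\|\bu_0^n\|_{L^2}^2\to|b_0|^2$), which is automatic for natural concentrating families such as the Gaussians of Lemma~\ref{min int}. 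So your caveat sharpens a hypothesis the paper leaves implicit; it is not a gap you have that the paper lacks.
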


To prove Theorem \ref{thm00loc}, it is interesting to follow the  view-point introduced at the end of Section \ref{sec-o} and to consider the local energy tensor  $u \otimes u$.
By elementary operations, we deduce from \eqref{toy model} that $u \otimes u$ satisfies the transport PDE: 
\begin{equation}
  \label{tensor-E}
(\partial_t + v \cdot \nabla) (u \otimes u) + A  (u \otimes u) + (u \otimes u) A^* = 0 .
\end{equation}
While we consider some vector fields $u$ with   $L^2(\R^d ; \R^N)$  regularity in space, the equation   \eqref{tensor-E} makes sense  and the Cauchy problem is well-posed, even if 
$u \otimes u$ is a matrix-valued measure $\mu$ (or even more generally for a distribution) rather than a  $L^1(\R^d ; \R^{N \times N})$ function. 
As we are interested in the case where the local energy  tensor  $u \otimes u$ concentrates in  a Dirac mass, it is useful to recall the following elementary result on the Cauchy problem for transport equations with measure solutions.

\begin{Lemma} \label{lem-tr-mu}
Let $n, N \in \N^*$ and $T >0$.
Let $V$ be a regular field of vector field from $\R^n$ to $\R^n$ and $A,\tilde A$ some smooth vector fields of matrices in $\R^{N \times N}$.
 Let $\mathcal M (\R^n ;\R^{N \times N})$ be the space of matrix-valued measures and 
 $$ \mathfrak C := C([0,T]; \mathcal M (\R^n ;\R^{N \times N}) - w* ),$$ 
 the space of the measures depending on time in a continuous way when  $\mathcal M (\R^n ;\R^{N \times N})$ is endowed by its weak-* topology.
Then
\begin{enumerate}
\item For any $\mu_0$ in  $\mathcal M (\R^n ;\R^{N \times N})$, there exists a unique solution 
$\mu \in \mathfrak C,$
to the PDE:
\begin{equation}
  \label{tr-ge}
\partial_t  \mu +  \Div (\mu V ) + A  \mu +\mu \tilde A = 0 ,
\end{equation}
with initial value $\mu_0$. 
\item If the initial datum $\mu_0$ is of the form 
 $\mu_0^{y_0,P_0} = P_0 \delta_{y_0}$ with  
 $y_0\in \R^n$ and $P_0 \in \R^{N \times N}$
  then the unique corresponding solution is given by 
\begin{equation}
 \mu^{y_0,P_0}  (t,\cdot) = P(t)  \delta_{y(t)} ,
\end{equation}
where  $y$ solves the nonlinear ODE: 
\begin{equation}
  \label{chch1}
y' = V(y) \quad  \text{ and } \quad y(0)=y_0,
\end{equation}
 and 
$P \in C^1([0,T]; \R^{N \times N} )$ solves the linear ODE: 
\begin{equation}
  \label{chch2}
P' + (A\circ y) P+P (\tilde A \circ  y)=0 \quad  \text{ and } \quad P(0)=P_0.
\end{equation}
\item Finally, if $(\mu_0^n)_n$ a sequence  in $\mathcal M (\R^n ;\R^{N \times N})$ which converges weakly-* to  $\mu_0^{y_0,P_0}$, then the corresponding solution
$\mu^n$  converges to  $\mu^{y_0,P_0} $ in $\mathfrak C$ as $ n \rightarrow +\infty$. 
\end{enumerate}
\end{Lemma}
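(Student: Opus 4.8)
The plan is to prove Lemma \ref{lem-tr-mu} by the method of characteristics, in three steps corresponding to the three assertions, with the first two done by explicit construction and a duality/uniqueness argument, and the third by a standard stability-of-characteristics argument.

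\textbf{Step 1 (existence and uniqueness in $\mathfrak C$).} For existence, I would push forward $\mu_0$ by the flow and conjugate by the matrix ODE. Precisely, let $Y_t(y_0)$ be the flow of $y' = V(y)$, which is a well-defined $C^1$-diffeomorphism of $\R^n$ for each $t \in [0,T]$ since $V$ is regular, and let $R_A(t,y_0), R_{\tilde A}(t,y_0)$ solve the linear matrix ODEs $\frac{\dd}{\dd t} R_A = -(A \circ Y_\cdot(y_0)) R_A$, $R_A(0,\cdot) = \Id$, and similarly $\frac{\dd}{\dd t} R_{\tilde A} = -R_{\tilde A}(\tilde A \circ Y_\cdot(y_0))$, $R_{\tilde A}(0,\cdot)=\Id$. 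One checks by testing against $C^\infty_c$ matrix-valued functions $\varphi(t,y)$ and using the change of variables $y = Y_t(y_0)$ that
\begin{equation*}
\mu(t,\cdot) = (Y_t)_* \big( R_A(t,\cdot)\, \mu_0\, R_{\tilde A}(t,\cdot) \big)
\end{equation*}
belongs to $\mathfrak C$ and solves \eqref{tr-ge} with datum $\mu_0$; the time-continuity in the weak-* topology is immediate from the continuity of $t \mapsto Y_t$ and of the $R$'s together with dominated convergence against a fixed test function. For uniqueness, I would use the dual transport equation: given a solution $\mu$ with zero initial datum and a test function $\psi_1 \in C^\infty_c(\R^n;\R^{N\times N})$, solve backwards the adjoint PDE \eqref{toy multi}-type equation on $[0,T]$ with terminal value $\psi_1$, obtaining a smooth compactly supported (by finite speed of propagation along the flow — more precisely, support transported by the flow, which stays in a compact set) solution $\psi$, and then the pairing $t \mapsto \int_{\R^n} \mu(t,\cdot) : \psi(t,\cdot)$ has zero derivative and zero initial value, hence $\int \mu(T,\cdot):\psi_1 = 0$; since $\psi_1$ is arbitrary, $\mu(T,\cdot)=0$, and replacing $T$ by any $t$ gives $\mu \equiv 0$.

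\textbf{Step 2 (the Dirac datum).} With $\mu_0 = P_0 \delta_{y_0}$, the pushforward formula of Step 1 gives directly
\begin{equation*}
\mu^{y_0,P_0}(t,\cdot) = \big( R_A(t,y_0)\, P_0\, R_{\tilde A}(t,y_0) \big)\, \delta_{Y_t(y_0)},
\end{equation*}
so it suffices to identify $y(t) := Y_t(y_0)$ with the solution of \eqref{chch1} — which is the definition of the flow — and $P(t) := R_A(t,y_0) P_0 R_{\tilde A}(t,y_0)$ with the solution of \eqref{chch2}: differentiating the product and using the ODEs for $R_A, R_{\tilde A}$ gives exactly $P' = -(A\circ y)P - P(\tilde A \circ y)$ with $P(0)=P_0$, and uniqueness for this linear ODE closes the point; uniqueness of $\mu^{y_0,P_0}$ in $\mathfrak C$ is Step 1.

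\textbf{Step 3 (stability).} Suppose $\mu_0^n \rightharpoonup P_0 \delta_{y_0}$ weak-* in $\mathcal M(\R^n;\R^{N\times N})$. In particular $\sup_n \|\mu_0^n\|_{\mathcal M}$ is finite (uniform boundedness / Banach–Steinhaus), so the pushforward formula shows $\sup_n \sup_{t\in[0,T]} \|\mu^n(t,\cdot)\|_{\mathcal M} < \infty$, using that $\|R_A\|, \|R_{\tilde A}\|$ are bounded uniformly in $(t,y_0) \in [0,T]\times\R^n$ (Grönwall, since $A,\tilde A$ are bounded along trajectories — or one restricts attention to a compact set carrying the mass; here one should be slightly careful, but $A,\tilde A$ smooth plus the transport structure suffice). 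Then for fixed $\varphi \in C^\infty_c(\R^n;\R^{N\times N})$ and fixed $t$,
\begin{equation*}
\int_{\R^n}\mu^n(t,\cdot):\varphi = \int_{\R^n} \mu_0^n : \Big( R_A(t,\cdot)^* \big(\varphi \circ Y_t\big) R_{\tilde A}(t,\cdot)^* \Big),
\end{equation*}
and the integrand tested against $\mu_0^n$ is a fixed continuous (indeed $C^1$) compactly supported matrix field independent of $n$, so by weak-* convergence of $\mu_0^n$ this converges to its value against $P_0\delta_{y_0}$, which is precisely $\int_{\R^n}\mu^{y_0,P_0}(t,\cdot):\varphi$. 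Hence $\mu^n(t,\cdot) \rightharpoonup \mu^{y_0,P_0}(t,\cdot)$ for each $t$; combined with the uniform mass bound and equicontinuity in time (again from the explicit formula), an Arzel\`a–Ascoli argument in $\mathfrak C$ upgrades this to convergence in $\mathfrak C$, i.e. uniformly in $t\in[0,T]$ against each fixed test function.

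\textbf{Main obstacle.} The genuinely delicate point is the uniqueness in assertion (1): measure solutions of \eqref{tr-ge} must be shown unique, and the clean way is the duality argument, which requires solving the dual (backward) equation with smooth compactly supported data and controlling the support of the dual solution via finite speed of propagation along the flow of $V$. Everything else — existence by pushforward, the Dirac computation, and the stability — is a routine unwinding of the characteristics formula, the only mild care being uniform-in-$y_0$ bounds on the conjugating matrices $R_A, R_{\tilde A}$, which hold because $A$ and $\tilde A$ are evaluated along trajectories and (for the stability statement) the relevant mass lives where these are controlled.
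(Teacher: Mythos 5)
Your proposal is correct and follows essentially the same route the paper indicates: existence via the characteristics/pushforward representation (your formula $\mu(t,\cdot) = (Y_t)_*\big(R_A(t,\cdot)\,\mu_0\,R_{\tilde A}(t,\cdot)\big)$ is precisely the ``superposition of the atomic case'' the authors allude to), uniqueness via duality in the spirit of DiPerna--Lions, and stability by continuity of the pushforward formula under weak-* convergence (the paper also mentions a quantitative alternative via matricial Wasserstein distances, which you do not pursue but do not need). Your explicit acknowledgement that uniform-in-$y_0$ bounds on $R_A, R_{\tilde A}$ require mild care (boundedness of $A,\tilde A$ along trajectories, or compactness of the relevant mass) is the right caveat, since the paper's hypotheses of ``regular'' and ``smooth'' fields are stated loosely.
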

\begin{proof}
The proof  is very standard and we only give here a few words of comments.
The existence  follows from the characteristics method. 
Indeed solutions can be represented as superpositions of the atomic case considered in the second item. 
The uniqueness and  stability parts of the statement  can be  proved by a duality argument, in the spirit of \cite{DiLi}, or by a quantitative approach, thanks to  matricial Wasserstein distances, see \cites{BV,NG}.
\end{proof}

Equipped with this result, we are now ready to prove Theorem \ref{thm00loc}.

\begin{proof}[Proof of Theorem \ref{thm00loc}]
Let
$X_{x_0} $ be given by \eqref{flow} and $b_{x_0,b_0}$ by \eqref{bxeq}. 
Then, by Leibniz' rule, 
$$
 (b_{x_0,b_0} \otimes b_{x_0,b_0})'  =   -A_{x_0}  (b_{x_0,b_0} \otimes b_{x_0,b_0})  -   (b_{x_0,b_0} \otimes b_{x_0,b_0})   A_{x_0}^*  .
$$
where we recall that $A_{x_0}$ is given by \eqref{coeff-x}. 
Let
$$\mu(t,\cdot) := (b_{x_0,b_0}(t) \otimes b_{x_0,b_0}(t)) \delta_{X_{x_0}(t)} .$$
Therefore, by the second item of Lemma \ref{lem-tr-mu}, 
$\mu$ is the unique solution to 
\begin{equation}
  \label{tensor-E-lim}
\partial_t  \mu +  \Div (\mu V ) + \Big(A- \frac12 (\Div v) \Id \Big) \mu +\mu \Big( A^* - \frac12 (\Div v) \Id \Big)= 0 ,
\end{equation}  
  with the initial data
    $\mu_0 = (b_0 \otimes b_0) \delta_{x_0} $.
\\  \par 

Let  $\bu^n$ be the  solution 
 of the transport PDE \eqref{toy model} associated with the initial data $\bu_0^n$, and set 
 $\mu^n := \bu^n \otimes \bu^n $ the corresponding local energy tensor. Then 
 by  \eqref{tensor-E},
\begin{equation}
  \label{tensor-E-bis}
\partial_t  \mu^n +  \Div (\mu^n V ) + \Big(A- \frac12 (\Div v) \Id \Big) \mu^n +\mu^n \Big( A^* - \frac12 (\Div v) \Id \Big)= 0 .
\end{equation}

Then, by \eqref{hyp_0} and the third item of Lemma \ref{lem-tr-mu}, we obtain that 
$ \mu^n$  converges to  $\mu $ in $\mathfrak C$ as $ n \rightarrow +\infty$. 
\\  \par 

Now, considering again  $M$  given by  \eqref{compo}
 one may  re-use 
\eqref{toy estim}, which can actually be re-obtained as a consequence of the fact that  \eqref{toy multi} is the dual equation of  \eqref{tensor-E} with $B^*B$ as a source term, that is: 
 \begin{align} \label{palab1}
   \int_0^T \|B\bu^n(t)\|_{L^2}^2\,\dd t
 = \int_{\R^d} M(T,x) : \mu^n (T,x) \,\dd x,
 \end{align}
while
 \begin{align} \label{palab2}
  \|\bu^n(T,\cdot)\|_{L^2}^2 
 = \int_{\R^d} \Id : \mu^n (T,x)\,\dd x.
 \end{align}
Thus the proof of Theorem \ref{thm00loc} follows 
by passing to the limit in these two expressions, recalling 
\eqref{compo} for the first one. 
\end{proof}

\section{Microlocalized cost} 
\label{sec-mc}

As we have seen before, the concept of energy is at the center of the observability issue. In the previous sections, in the case of some transport equations, we considered  the energy tensor $\bu\otimes\bu$, which encodes the localization of the energy in space, with respect to the components of $\bu$. However, for more general evolution linear first order PDEs, in several space dimensions, dispersive effects occur and the propagation of  energy  depends not only on the space variable $x$, but also on the frequency variable $\xi$. Hence we need a tool to encode the distribution of energy in the phase space, generalizing  the energy tensor $\bu\otimes\bu$. To this end, the natural object to consider is the matrix Wigner transform, which we consider in its semiclassical version, focusing on high frequencies. 
 
\subsection{Semiclassical Wigner matrix transform} 

Below $\eps\in (0,1]$. However the regime of interest later on  corresponds to this parameter $\eps$ converging to $0$, its inverse standing for the typical size of the frequencies under consideration. 
\begin{Definition}\label{def:sc-wig}
For $u$ and $v$  in $L^2(\R^d ; \C^N)$,  we define the 
 \textit{Wigner matrix} $W^\eps [u,v](x,y)$, for any $x,\xi\in\R^d\times\R^d$, by
\begin{equation*}
W^\eps [u,v](x,\xi) :=\frac1{(2\pi)^d}\int_{\R^d} u\left(x-\eps \frac y2\right)\otimes \overline v\left(x+\eps \frac y2\right)e^{iy\cdot\xi}\, \dd y.
\end{equation*}
\end{Definition}
The mapping $(u,v) \mapsto W^\eps [u,v]$ is bilinear and verifies.
 \begin{equation}\label{sym wigner}
    W^\eps[u,v]=\left(W^\eps[v,u]\right)^*.
\end{equation}
In the case where $u=v$ we simply denote by $W^\eps [u]$ the Wigner transform $W^\eps [u,u]$, which is a field of symmetric matrices.
The Wigner matrix encodes the energy in the phase space in the following sense: the marginals of $W^\eps [u,v]$  respectively  satisfy, formally, 
 for any $x \in \R^d$, for any $\xi \in \R^d$, 
\begin{align*}
\int_{\R^d}W^\eps [u,v]\, \dd\xi &= u(x) \otimes \overline v(x),\\
\int_{\R^d}W^\eps [u,v]\, \dd x &= \frac1{(2\pi\eps)^d} \widehat u \left(\frac\xi\eps\right) \otimes \left(\widehat v\right)\left(\frac\xi\eps\right),
\end{align*}
where $\hat u, \hat v$ denote the Fourier transforms of $u$ and $v$.
In particular, 
\begin{equation} \label{total-marg}
\iint_{\R^d\times\R^d} \Big( W^\eps[u] : \Id \Big) \, \dd x\dd\xi = \| u \|_{L^2(\R^d; \R^N)}^2  ,
\end{equation}
where we recall that the notation $:$ stands for the Frobenius product.\\
We also have, for any $1\le j,k\le N$, $$\left|W_{j,k}^\eps[u,v](x,\xi)\right|\le \frac1{(\pi\eps)^d} \|u_j\|_{L^2(\R^d;\C)} \|v_k\|_{L^2(\R^d;\C)},$$
where $W_{j,k}^\eps[u,v]$ denotes the coefficient $(j,k)$ of the Wigner matrix, and $u_j$ and $v_k$ respectively denote the $j$-th component of $u$ and the $k$-th component of $v$. This implies that the Wigner matrix $W[u,v]$ is in $L^{\infty}$, from $\R^d\times\R^d$ to $\R^{N\times N}$ equipped with the Frobenius norm, and that $$\|W^\eps[u,v]\|_{L^\infty} \le \frac1{(\pi\eps)^d} \|u\|_{L^2} \|v\|_{L^2}.$$
The Wigner matrix $W^\eps[u,v]$ is also in $L^2(\R^d\times\R^d; \R^{N\times N})$, with $$\|W^\eps[u,v]\|_{L^2}=\|u\|_{L^2}\|v\|_{L^2}.$$ For more details about those properties, we refer, for instance, to \cite{lerner}.

\begin{remark}  The Wigner matrix $W^\eps[u]$ carries information about the energy tensor $u\otimes u$, but is not necessarily nonnegative. However, for positivity, one may consider the Husimi transform instead, or Wigner measures, which are weak limits of Wigner matrices as $\eps$ goes to 0, in some sense. For our purpose, a sign is not required so we will not use those tools. \end{remark}

 Below we recall some classical examples. 
 
  \begin{Example}\label{cs}
An important example is given by the coherent states defined, for any  $x_0,\xi_0 \in\R^d$, for any $\bb_0 \in\R^N$, as follows:
  $$\bu^\eps_{x_0,\xi_0,\bb_0} (x) :=(\pi\eps)^{-\frac d4}\bb_0 e^{i\frac{(x-x_0)\cdot\xi_0}{\eps}}  e^{-\frac{|x-x_0|^2}{2\eps}}.$$
  Then one may compute $$W^\eps[\bu^\eps_{x_0,\xi_0,\bb_0}](x,\xi)=(\pi\eps)^{-d}(\bb_0\otimes\bb_0) e^{-\frac{|x-x_0|^2+|\xi-\xi_0|^2}{\eps}}.$$
  In particular, the matrix Wigner transform 
 $\mathcal W^\eps [\bu^\eps_{x_0,\xi_0,\bb_0}]$ weakly converges, as $\eps \rightarrow 0$,  in the sense of distributions to $(\bb_0 \otimes \bb_0) \delta_{x_0,\xi_0} $.
   \end{Example}

  \begin{Example}
  Another classical example  is given by the WKB states defined,
       for any $\alpha\in(0,1 ]$, any $\xi_0\in\R^d$, and any amplitude function $a(x)\in L^2(\R^d)$,  by
       $$u^\eps_{a,\alpha}(x):=a(x)e^{i\frac{x\cdot\xi_0}{\eps^\alpha}}.$$ 
       Then, as $\eps\to0$, for any $\alpha\in(0,1)$, 
       $$W^\eps[u^\eps_{a,\alpha}]\mathrel{\overset{\makebox[0pt]{\mbox{\normalfont\tiny\sffamily w*}}}{\to}} a(x)\otimes  a(x)\delta_0(\xi)\text{ in }\mathcal S'(\R^d\times\R^d) ,$$ 
     while in the case where $\alpha=1$,
     $$W^\eps[u^\eps_{a,\alpha}]\mathrel{\overset{\makebox[0pt]{\mbox{\normalfont\tiny\sffamily w*}}}{\to}} a(x)\otimes a(x) \delta_{\xi_0}(\xi)  \text{ in }\mathcal S'(\R^d\times\R^d) .$$
  \end{Example}

\subsection{Pseudodifferential operators.}

Our analysis allows to consider not only PDEs but also pseudo-differential equations, what is of interest to cover equations from incompressible fluid mechanics. In microlocal analysis, 
different kinds of quantizations are available, see for instance to \cite[Chapter 4]{zworski}. Here we consider  
  the Weyl quantization, which associates with a symbol $a=a(x,\xi)$ in a suitable class the semiclassical pseudo-differential operator $\w[a]$ 
  defined through its action on functions $u\in L^2(\R^d)$, by
\begin{align*}
( \w[a] u) (x) := (2\pi \eps)^{-d} \int e^{\frac{i(x-y) \cdot \xi}{\eps}} a \left(\frac{x+y}{2}, \xi\right)u(y) \; \dd\xi \, \dd y. 
\end{align*}
Throughout this paper, we will simply assume every symbol to be in the class $S(1)$ of functions bounded as well as all their derivatives, defined by:\begin{equation}\label{def S1}
    S(1):=\{a \in C^\infty(\R^{2d}) : \forall \alpha\in \mathbb N^{2d},\exists C_\alpha>0,\forall (x,\xi)\in\R^{2d}, \|\de^\alpha a\|_\infty\le C_\alpha\}.
\end{equation}
Wider classes of symbols may be studied, but they are not required for the analysis which we perform in this paper. With that choice of class, the pseudo-differential operator given above defines a linear continuous operator from $L^2(\R^d;\C^N)$ to itself, and there exists a constant $C>0$, uniform in $\eps$, as well as a universal constant $\gamma$, such that, for all $a\in S(1),$ it holds 
\begin{equation}\label{norm op}
\|\w[a]\|_{\mathcal L(L^2;L^2)}\le C\sup_{|\alpha|+|\beta|\le \gamma d}\|\de_x^\alpha\de_\xi^\beta a\|_\infty,
\end{equation} 
where $\mathcal L(L^2;L^2)$ denotes the space of linear bounded operators from $L^2$ to itself, endowed with the usual operator norm. From now on, we denote for $k$ in $\N$ by
$\mathcal N_k$ the norm on $S(1)$:\begin{equation}\label{seminorme}
    \mathcal N_k(\cdot):=\sup_{|\alpha|+|\beta|\le k} \|\de_x^\alpha \de_\xi^\beta \cdot\|_\infty.
\end{equation}
We list below some useful properties of calculus of the Weyl quantization. More details, other choices of quantizations and other classes of symbols can be found in \cite{lerner}, \cite{zworski}.
\begin{Lemma}[Adjoint operator.]\label{lem:adj}
    Let $a\in S(1)$ be a symbol. Then the adjoint operator of $\w[a]$ is given by \begin{equation}\label{eq:adjoint}
        (\w[a])^*=\w[a^*].
    \end{equation}
\end{Lemma}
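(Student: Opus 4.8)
The plan is to verify \eqref{eq:adjoint} directly from the integral definition of the Weyl quantization, since the statement is a genuinely elementary computation once one is careful about conjugates and the roles of $u$ and $v$. First I would fix $u,v\in L^2(\R^d;\C^N)$ and write out the pairing $\langle \w[a]u, v\rangle_{L^2}$, inserting the defining oscillatory integral
\begin{align*}
\langle \w[a]u, v\rangle_{L^2} = (2\pi\eps)^{-d} \iiint e^{\frac{i(x-y)\cdot\xi}{\eps}}\, a\!\left(\tfrac{x+y}{2},\xi\right) u(y) \cdot \overline{v(x)}\ \dd\xi\,\dd y\,\dd x,
\end{align*}
where the dot is the Hermitian pairing on $\C^N$ and $a$ acts as an $N\times N$ matrix. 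The goal is to massage this into $\langle u, \w[b]v\rangle_{L^2}$ for $b = a^*$, which will identify the adjoint.

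The key manipulation is a symmetric change of variables: swap the names $x\leftrightarrow y$ in the integral, which sends the phase $e^{\frac{i(x-y)\cdot\xi}{\eps}}$ to its complex conjugate $e^{-\frac{i(x-y)\cdot\xi}{\eps}}$ while leaving the midpoint $\tfrac{x+y}{2}$ invariant, and then substitute $\xi \mapsto -\xi$ to restore the original phase. After these two substitutions the integral becomes
\begin{align*}
(2\pi\eps)^{-d} \iiint e^{\frac{i(x-y)\cdot\xi}{\eps}}\, a\!\left(\tfrac{x+y}{2},-\xi\right) u(x) \cdot \overline{v(y)}\ \dd\xi\,\dd y\,\dd x,
\end{align*}
wait — I need to be careful: the substitution $\xi\mapsto-\xi$ also changes the argument of $a$, so one does not actually want it; instead, after only the $x\leftrightarrow y$ swap one recognizes the conjugated phase together with the matrix $a(\tfrac{x+y}{2},\xi)$ pairing against $u(x)\cdot\overline{v(y)}$, and the cleanest route is to move the matrix adjoint across the Hermitian inner product: $a\,u(x)\cdot\overline{v(y)} = u(x)\cdot \overline{a^* v(y)}$. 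Collecting terms, I would arrive at $\langle u, \w[a^*]v\rangle_{L^2}$ after recognizing that $\overline{e^{\frac{i(x-y)\cdot\xi}{\eps}}}$ is precisely the phase appearing in the definition of $\w[a^*]$ applied to $v$ (the minus sign being absorbed by relabelling $\xi\mapsto-\xi$ now, which is harmless because it simultaneously flips the phase and the $\xi$-argument of $a^*$, and $a^*$ integrated over all of $\R^d$ in $\xi$ is insensitive to this). By density and the continuity bound \eqref{norm op}, the identity extends from Schwartz functions to all of $L^2$, and $a^*\in S(1)$ whenever $a\in S(1)$ since taking the pointwise matrix adjoint commutes with differentiation and preserves all the seminorms $\mathcal N_k$.

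The main subtlety — and the only place where care is genuinely needed — is bookkeeping of the two intertwined sign flips (the phase conjugation from swapping $x,y$ and the $\xi\mapsto-\xi$ relabelling) together with the matrix transposition, making sure one lands on $a^*(\tfrac{x+y}{2},\xi)$ with the \emph{correct} arguments and not $a^*(\tfrac{x+y}{2},-\xi)$ or, worse, the transpose without conjugate. A clean way to avoid a sign error is to justify the oscillatory integrals rigorously first on Schwartz $u,v$ (where Fubini and the substitutions are unambiguous), treat the $\C^N$-valued case componentwise so that the matrix $a$ simply becomes a scalar symbol in each entry with the adjoint swapping row/column indices and conjugating, and only at the end invoke density. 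No serious obstacle is expected; this is a standard fact recorded here for later use, and the proof is essentially two substitutions plus the observation that $\overline{a(z,\xi)}^{\,T} = a^*(z,\xi)$ by definition of the matrix adjoint.
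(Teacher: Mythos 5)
Your computation is correct and is the standard textbook argument: pair $\w[a]u$ against $v$, swap the integration variables $x\leftrightarrow y$, and then move the matrix $a$ across the Hermitian inner product on $\C^N$ to get $a^*$. The paper does not actually prove Lemma~\ref{lem:adj} --- it records it as a standard consequence of the symmetric Weyl kernel and points the reader to \cite{lerner} and \cite{zworski} --- so there is nothing in the source for your argument to diverge from.

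One piece of bookkeeping in your write-up should be cleaned up before it reads as a proof rather than notes-to-self. Your parenthetical about ``the minus sign being absorbed by relabelling $\xi\mapsto-\xi$ \dots\ $a^*$ integrated over all of $\R^d$ in $\xi$ is insensitive to this'' is both unnecessary and, as stated, false: the substitution $\xi\mapsto-\xi$ would change the integrand to $e^{-i(x-y)\cdot\xi/\eps}\,a^*\!\left(\tfrac{x+y}{2},-\xi\right)v(y)$, which is the kernel of $\w[\,\check a^*\,]$ with $\check a^*(z,\xi):=a^*(z,-\xi)$, not of $\w[a^*]$. In fact no $\xi$-relabelling is wanted: after swapping $x\leftrightarrow y$ you already have the phase $e^{-i(x-y)\cdot\xi/\eps}$, which is exactly $\overline{e^{\,i(x-y)\cdot\xi/\eps}}$, i.e.\ precisely what shows up in $\overline{(\w[a^*]v)(x)}$. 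Concretely, after the swap and the identity $\bigl(a\,u(x)\bigr)\cdot\overline{v(y)} = u(x)\cdot\overline{a^*\,v(y)}$ you should read off directly
\begin{equation*}
\langle \w[a]u,v\rangle_{L^2}
= (2\pi\eps)^{-d}\int u(x)\cdot\overline{\ \iint e^{\frac{i(x-y)\cdot\xi}{\eps}}\,a^*\!\left(\tfrac{x+y}{2},\xi\right)v(y)\,\dd\xi\,\dd y\ }\ \dd x
= \langle u, \w[a^*]v\rangle_{L^2},
\end{equation*}
and stop there. Your remarks about first working on Schwartz functions, treating the $\C^N$-valued case componentwise, and closing the argument by density using \eqref{norm op} together with the fact that $a\mapsto a^*$ preserves $S(1)$ and every seminorm $\mathcal N_k$, are all correct and suffice.
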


\begin{Lemma}[Composition of operators.]\label{lem:comp}
    Let $a,b\in S(1)$ be two symbols. Then the following composition law holds:\begin{equation}\label{eq:composition}
        \w[a]\w[b]=\w[ab]+\frac\eps{2i} \w[\{a,b\}] + O(\eps^2)_{\mathcal L(L^2;L^2)},
    \end{equation}
    where $\{\cdot,\cdot\}$ denotes the Poisson bracket $$\{a,b\}=(\de_\xi a)(\de_xb)-(\de_xa)(\de_\xi b).$$
\end{Lemma}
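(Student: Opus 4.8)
The plan is to identify $\w[a]\w[b]$ as the Weyl quantization of a single symbol, the twisted (Moyal) product $a\#_\eps b$, for which there is an exact oscillatory-integral representation, and then to Taylor-expand that representation in $\eps$, estimating the remainder by means of the Calderón--Vaillancourt type bound \eqref{norm op}.

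First I would compute the Schwartz kernel of $\w[a]\w[b]$. Writing the kernel of $\w[a]$ as $K_a(x,y)=(2\pi\eps)^{-d}\int e^{\frac{i(x-y)\cdot\xi}{\eps}}a\big(\tfrac{x+y}{2},\xi\big)\,\dd\xi$, the composition has kernel $\int K_a(x,y)K_b(y,z)\,\dd y$; a linear change of variables together with the Fourier inversion formula recasts it as the kernel of $\w[c]$ with
\[
c(x,\xi) = (a\#_\eps b)(x,\xi) = \frac{1}{(\pi\eps)^{2d}}\int_{\R^{4d}} e^{-\frac{2i}{\eps}\sigma\left((y,\eta),(z,\zeta)\right)}\,a(x+y,\xi+\eta)\,b(x+z,\xi+\zeta)\,\dd y\,\dd\eta\,\dd z\,\dd\zeta ,
\]
where $\sigma\big((y,\eta),(z,\zeta)\big)=\eta\cdot z-y\cdot\zeta$ is the symplectic pairing and the integral is taken in the oscillatory sense (since $a,b\in S(1)$, a few non-stationary-phase integrations by parts in $(y,\eta,z,\zeta)$ make it absolutely convergent). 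Equivalently, $a\#_\eps b=\exp\!\big(\tfrac{i\eps}{2}\,\sigma(D_{x_1},D_{\xi_1};D_{x_2},D_{\xi_2})\big)\big(a(x_1,\xi_1)b(x_2,\xi_2)\big)\big|_{x_1=x_2=x,\ \xi_1=\xi_2=\xi}$.

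Second, I would Taylor-expand the exponential to second order. The order-zero term is $ab$; the order-one term, after unwinding $\sigma$, equals $\tfrac{\eps}{2i}\big(\de_\xi a\cdot\de_x b-\de_x a\cdot\de_\xi b\big)=\tfrac{\eps}{2i}\{a,b\}$ (this fixes the sign convention in $\sigma$ above). Hence $\w[a]\w[b]=\w[ab]+\tfrac{\eps}{2i}\w[\{a,b\}]+\w[\eps^2 r_\eps]$, where $r_\eps$ is the integral form of the second-order Taylor remainder: an explicit oscillatory integral, quadratic in $a,b$, involving their first and second derivatives.

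The main obstacle --- the one requiring genuine care --- is to show that this remainder is $O(\eps^2)$ in $\mathcal L(L^2;L^2)$ uniformly in $\eps\in(0,1]$. I would prove that $r_\eps\in S(1)$ uniformly in $\eps$, i.e. for each $k$ there is $C_k$ with $\mathcal N_k(r_\eps)\le C_k$, the constant depending on finitely many seminorms $\mathcal N_j(a),\mathcal N_j(b)$ only. This follows from the oscillatory-integral formula for $r_\eps$: the phase $\sigma$ has no critical point, so repeated integrations by parts trade each power of $\eps$ for a derivative landing on the amplitude, which stays bounded because $a,b\in S(1)$; this renders the integral absolutely convergent, and the $x,\xi$-derivatives of $r_\eps$ are then estimated by differentiating under the integral sign. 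With $\mathcal N_{\gamma d}(r_\eps)\le C$ uniform in $\eps$, the bound \eqref{norm op} gives $\|\w[r_\eps]\|_{\mathcal L(L^2;L^2)}\le C'\mathcal N_{\gamma d}(r_\eps)\le C''$, so $\|\w[\eps^2 r_\eps]\|_{\mathcal L(L^2;L^2)}=\eps^2\|\w[r_\eps]\|_{\mathcal L(L^2;L^2)}=O(\eps^2)$, which concludes the proof. (The adjoint identity of Lemma \ref{lem:adj} can be used to symmetrize parts of these estimates, but it is not essential.)
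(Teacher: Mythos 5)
The paper does not provide its own proof of Lemma \ref{lem:comp}; it is stated as part of the standard semiclassical Weyl calculus, with the reader referred to \cite{lerner} and \cite{zworski}. Your proposal reconstructs the standard proof one finds in those references: identify $\w[a]\w[b]$ as $\w[a\#_\eps b]$ via the Schwartz-kernel composition, obtain the oscillatory-integral (equivalently, the exponential-of-a-differential-operator) representation of the Moyal product, Taylor-expand to order two, and control the remainder symbol in $S(1)$ uniformly in $\eps$ so that the Calder\'on--Vaillancourt bound \eqref{norm op} yields the operator-norm estimate. This is the right route and the structure is complete.

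One statement in the remainder estimate needs correcting. You write that ``the phase $\sigma$ has no critical point, so repeated integrations by parts trade each power of $\eps$ for a derivative landing on the amplitude.'' Taken literally this would show the entire integral is $O(\eps^\infty)$, which cannot be right: the phase $-2\sigma((y,\eta),(z,\zeta))$ is a nondegenerate quadratic form on $\R^{4d}$ with a unique critical point at the origin, and the Moyal expansion $ab+\frac{\eps}{2i}\{a,b\}+\dots$ is a \emph{stationary-phase} expansion about that critical point, not a non-stationary-phase vanishing. Integration by parts is indeed used, but for two distinct purposes and neither is the one you describe: (i) away from the origin, where the phase gradient is bounded below, to render the oscillatory integral absolutely convergent even though $a,b$ are merely in $S(1)$ and have no decay; and (ii) together with the second-order Taylor remainder formula $e^{it}=1+it-t^2\int_0^1(1-s)e^{ist}\,\dd s$ applied to $e^{\frac{i\eps}{2}\sigma(D,D)}$, to show $\mathcal N_k(r_\eps)$ is bounded uniformly in $\eps$ by finitely many seminorms of $a$ and $b$. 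With that correction your argument agrees with the standard one; the rest --- the kernel computation, the sign bookkeeping leading to $\frac{\eps}{2i}\{a,b\}$, and the final appeal to \eqref{norm op} --- is sound.
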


The Wigner transform is associated with the semiclassical Weyl quantization by the relation:
 \begin{equation}\label{q-w}
\iint_{\R^d\times\R^d} \Big( W^\eps[u,v] : a(x, \xi) \Big) \, \dd x\dd\xi   = \langle  \w[a^*] u,v \rangle_{L^2} ,
\end{equation}
for any $a(x, \xi)$ in  $S(1)$, and any $u,v\in L^2(\R^d;\C^N)$. 
This property allows us to view Wigner matrices as tempered distributions in $S(1)'$, the dual space of $S(1)$. Indeed, for any $u\in L^2(\R^d;\C^N)$, combining the property \eqref{q-w} with the estimate \eqref{norm op} yields the existence of a constant $C=C(u)>0$, uniform in $\eps$, such that for any $a\in S(1)$ it holds:
$$\left|\iint_{\R^d\times\R^d}W^\eps[u](x,\xi):a(x,\xi)\,\dd x\,\dd\xi\right|\le C\mathcal N_{\gamma d}(a),$$
which allows us to consider $W^\eps[u]$ as a distribution of order at most $\gamma d$ (where $W^\eps[u]=W^\eps[u,u]$ as in \eqref{total-marg}). More precisely, for any $u,v\in L^2(\R^d;\C^N)$, we have $W^\eps[u,v]\in S(1)'_{\gamma d}(\R^{2d})$, where $$S(1)'_k(\R^{2d})=\left\{\psi(x,\xi)\in S(1)' : \exists C>0,\forall \varphi\in S(1), \left|\langle \psi,\varphi\rangle_{\mathcal S',\mathcal S}\right|\le C\mathcal N_k(\varphi)\right\}.$$
We endow $S(1)'_k(\R^{2d})$ with the norm $$\|\psi\|_{\mathcal S'_k}:=\sup_{\phi\in S(1), \mathcal N_k(\phi)\le 1} \langle\psi,\phi\rangle_{S(1)',S(1)}.$$
%
For any $u,v\in L^2(\R^d ;\C^N)$, and $a\in S(1),$\begin{equation}\label{ww}
    W^\eps[\w[a]u,v]=aW^\eps[u,v]+\frac\eps{2i}\{a, W^\eps[u,v]\}+\eps^2\mathcal R_\eps,
\end{equation} 
where the remainder term $\mathcal R_\eps\in S(1)'_{\gamma d+2}(\R^{2d})$ verifies $$\|\mathcal R_\eps\|_{S(1)'_{\gamma d+2}}=O(1).$$
%
%
\subsection{Statement of the result}

With the previous material, it is natural to extend our setting to hyperbolic pseudo-differential equations of the form 
\begin{align}\label{main order diag}
   \eps \de_t\bu+\w[i H(x, \xi) \Id + \eps A(x, \xi)]\bu =0,\quad \text{on } [0,T], 
\end{align}
where the principal part $H$ is a real scalar symbol in $S(1)$ 
and the sub-principal part $A$ is a matrix-valued symbol in $S(1)$.
\par \ 

Our setting also allows to consider a wider class of  observability operators  given as well by  pseudodifferential operators associated with a symbol matrix $B$ in $S(1)$. Thus we are led to the following extension of the notion of observability cost $\mathcal C[\bu_0]$ of an initial data  $\bu_0$ at time $T$ for the equation  \eqref{main order diag}  
by the pseudodifferential observability operator of symbol $B$ in $S(1)$.

\begin{Definition}\label{def:mcost}
Let $T>0$. 
Let  $(x_0 ,\xi_0,b_0) \in \R^d \times (\R^d\setminus\{0\})  \times \R^N$ and
 $\bu_0$  in $L^2(\R^d ; \R^N)$. 
The observability cost  $\mathcal C[\bu_0]$   of the initial data $\bu_0$ at time $T$ for the equation  \eqref{main order diag}  
by the observability operator of symbol $B$ in $S(1)$
is defined by 
 \begin{equation}
  \label{costuf}
 \mathcal C[\bu_0] := 
  \frac{     \|\bu(T,\cdot)\|_{L^2}^2 }{   \int_0^T\|\w[B]\bu(t)\|_{L^2}^2\,\dd t  } ,
  \end{equation}
 where $\bu$ is the  solution 
 of  \eqref{main order diag}  associated with $\bu_0$. 
\end{Definition}

The purpose of this section is to study  the observability cost 
of  initial data which are localized  at $(x_0 ,\xi_0 ,\bb)$ in the following sense. 
\begin{Definition}\label{def:loc}
A family $(\bu_0^\varepsilon)_{\eps \in (0,1]}$  in $L^2(\R^d ; \R^N)$ is said to be localized at 
$(x_0 ,\xi_0,b_0) \in \R^d \times (\R^d\setminus\{0\})  \times \mathcal S^{N-1}$ if its 
 \textit{Wigner matrix}  satisfies 
 $$\mathcal W^\eps  [\bu_0^\varepsilon]  \mathrel{\overset{\makebox[0pt]{\mbox{\normalfont\tiny\sffamily w*}}}{\to}} (b_0 \otimes b_0) \delta_{x_0 ,\xi_0} \quad   \text{  in  }  \,  \mathcal (S(1))', \quad   \text{ as } \, \eps \rightarrow 0 .$$
\end{Definition}
The question at stake is then the following. 
\begin{Question} \label{qul}
Let $T>0$. 
Let  $(x_0 ,\xi_0,b_0) \in \R^d \times (\R^d\setminus\{0\} ) \times \R^N$ and
a family $(\bu_0^\varepsilon)_{\eps \in (0,1]}$  in $L^2(\R^d ; \R^N)$  localized at $(x_0 ,\xi_0,b_0)$. 
Can we determine the limit of 
 the observability cost  
$\mathcal C[\bu_0^\varepsilon]$ 
 as $\eps \rightarrow 0$ ?
\end{Question}

The  result below provides an answer  in terms of  the Hamiltonian flow induced by the principal symbol $H$, the microlocalized observability Gramian matrices, the bicharacteristic equations and the  bicharacteristic amplitude equation whose definitions are now given. 
\begin{Definition}\label{flotH}
Let  $H$ be a real scalar symbol in $S(1)$ and $v_{H} $ the divergence free vector field from $\R^d \times \R^d$ to $\R^d \times \R^d$  defined by 
\begin{equation}
\label{ham-field}
v_{H} 
:= (\nabla_{\xi} H , -\nabla_{x} H).
\end{equation}
For any  $x_0 ,\xi_0 \in\R^d$, 
we define the Hamiltonian flow  $(X_{x_0 ,\xi_0 }(t), \Xi_{x_0 ,\xi_0 }(t))$ 
induced by $H$ as the unique solution to the nonlinear ODE:  
\begin{equation}
\label{bich1}
\big( X_{x_0 ,\xi_0 }, \Xi_{x_0 ,\xi_0 } \big)' 
= v_{H}  \big( (X_{x, \xi}, \Xi_{x_0, \xi_0} )\big), \quad \left(X_{x_0 ,\xi_0 },\Xi_{x_0 ,\xi_0 }\right)(0)=(x_0 ,\xi_0 ).
\end{equation}
\end{Definition}
 
  \begin{remark} \label{Liouville}
    It is worth noticing that the determinant 
    $J(t):=\det(D(X_{x_0,\xi_0}(t), \Xi_{x_0,\xi_0}(t)))$  of the Jacobian matrix associated with the Hamiltonian flow satisfies
     $J(t)=1$ for all $t\in [0,T]$ since 
$   J'(t)=\mdiv \left(\nabla_\xi H, -\nabla_x H\right) J(t)=0$ 
and $J(0)=0$. 
Here and below we use the notation $\mdiv$ to denote the divergence operator in $\R^d \times \R^d$.
\end{remark}
We now define the microlocalized observability Gramian matrices, as solutions 
 of some differential Lyapunov equations involving the coefficients of \eqref{main order diag} along the Hamiltonian flow. 
\begin{Definition}\label{flot-Gr}
For any  $x_0 ,\xi_0 \in\R^d$, we define the Gramian matrix microlocalized in $(x_0 ,\xi_0 )$  as the unique solution $G_{x_0 ,\xi_0 }$ to the differential  Lyapunov equation $Lyap(A_{x_0 ,\xi_0 },B_{x_0 ,\xi_0 })$ where the time-dependent matrices  
  $A_{x_0 ,\xi_0 }$ and $B_{x_0 ,\xi_0 }$ are given by
$$A_{x_0 ,\xi_0 }(t):=A(X_{x_0, \xi_0} (t),\Xi_{x_0 ,\xi_0 }(t)) \quad   \text{ and }   \quad   B_{x_0 ,\xi_0 }(t):=B(X_{x_0, \xi_0} (t),\Xi_{x_0 ,\xi_0 }(t)).$$
\end{Definition}
Finally we define the bicharacteristic  amplitude.
\begin{Definition}\label{biflotH}
For any  $x_0,\xi_0\in\R^d$, for any $\bb_0$ in $\R^N$, 
we define 
 the bicharacteristic  amplitude $\bb_{x_0,\xi_0,\bb_0}(t)$ associated with $H$ and $A$ 
 as the unique solution to the linear ODE:
  \begin{equation}
  \label{bich3}
\bb_{x_0,\xi_0,\bb_0}'=-A_{x_0,\xi_0} \,  \bb_{x_0,\xi_0,\bb_0},\quad \bb_{x_0 ,\xi_0 ,\bb_0}(0)=\bb_0 .
\end{equation}
\end{Definition}

The  result below answers to Question \ref{qul} and identifies the limit of 
 the observability cost  
$\mathcal C[\bu_0^\varepsilon]$ of a family $(\bu_0^\varepsilon)_{\eps \in (0,1]}$  in $L^2(\R^d ; \R^N)$  localized at $(x_0 ,\xi_0,b_0)$ as $\eps \rightarrow 0$ in terms of 
the Rayleigh quotient of the  microlocalized observability Gramian matrix  along the bicharacteristic  flow with   the  bicharacteristic amplitude, at the final time.

\begin{thm}
\label{PDO transport}
Let $T>0$. 
Let $x_0\in \R^d, \xi_0\in \R^d\setminus\{0\}$ and set 
 $G_T := G_{x_0,\xi_0} (T),$
 where $G_{x_0,\xi_0}$ is  the Gramian matrix microlocalized in $(x_0,\xi_0)$ as given by Definition \ref{flot-Gr}.
Let $\bb_0 \in  \mathcal S^{N-1}$ and set 
$$\bb_T^\mathcal{N} :=\frac{\bb_{x_0,\xi_0,\bb_0} (T)}{|\bb_{x_0,\xi_0,\bb_0} (T)|},$$ where $\bb_{x_0,\xi_0,\bb_0} $ is the transported amplitude associated with $H$ and $A$ given by  
Definition \ref{biflotH}. 
Let $(\bu_0^\varepsilon)_{\eps \in (0,1]}$  in $C^\infty(\R^d)\cap L^2(\R^d ; \R^N)$ be localized at $(x_0 ,\xi_0,b_0)$ in the sense of Definition \ref{def:loc}.
Then  the observability cost $\mathcal C[\bu_0^\eps]$ satisfies
$$\mathcal C[\bu_0^\eps]^{-1} \rightarrow  (\bb_T^{\mathcal{N} })^* \, G_T \, \bb_T^{\mathcal{N} } ,\quad   \text{ as } \,  \eps \rightarrow 0 .$$
\end{thm}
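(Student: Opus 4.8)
The plan is to follow the same strategy used for Theorems \ref{thm0}, \ref{thm00} and \ref{thm00loc}: introduce a \emph{recorded energy} built from the microlocalized Gramian, show it satisfies an exact identity via a duality argument, and then pass to the limit as $\eps \to 0$ using the localization hypothesis. The new ingredient compared to Section \ref{sec-local} is that the energy tensor $\bu \otimes \bu$ must be replaced by the semiclassical Wigner matrix $W^\eps[\bu]$, and the transport equation for it is now only exact up to $O(\eps)$ (in fact up to $O(\eps^2)$ after one commutator), so the argument becomes asymptotic rather than exact. First I would write down the equation satisfied by $W^\eps[\bu^\eps]$ where $\bu^\eps$ solves \eqref{main order diag}: using \eqref{ww} with the symbol $iH\Id + \eps A$ and the antisymmetrization coming from $(\w[iH\Id+\eps A])^* = \w[-iH\Id + \eps A^*]$, one finds that $W^\eps := W^\eps[\bu^\eps]$ satisfies, modulo a remainder of size $O(\eps)$ in the appropriate dual norm $\|\cdot\|_{S(1)'_{k}}$,
\begin{equation*}
\de_t W^\eps + v_H \cdot \nabla_{x,\xi} W^\eps + A W^\eps + W^\eps A^* = O(\eps),
\end{equation*}
where $v_H = (\nabla_\xi H, -\nabla_x H)$ is the divergence-free Hamiltonian field of Definition \ref{flotH}. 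This is exactly a transport-type equation of the form \eqref{tensor-E}, now set on phase space $\R^d \times \R^d$, with the damping/amplification matrix $A$ (no divergence correction is needed since $v_H$ is divergence-free, cf.\ Remark \ref{Liouville}).

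Next I would set up the dual (multiplier) object. Define $M^\eps(t,x,\xi)$ by transporting the microlocalized Gramian along the Hamiltonian flow: $G_{x_0,\xi_0}(t) = M^\eps(t, X_{x_0,\xi_0}(t), \Xi_{x_0,\xi_0}(t))$; since the Hamiltonian flow is a measure-preserving diffeomorphism of phase space (Remark \ref{Liouville}), $M^\eps$ is well-defined, symmetric, nonnegative, vanishes at $t=0$, and satisfies the dual equation
\begin{equation*}
\de_t M + v_H \cdot \nabla_{x,\xi} M - A^* M - M A = B^* B
\end{equation*}
— this is the phase-space analogue of \eqref{toy multi} with $F = B^*B$, verified exactly as in the proof of Theorem \ref{thm00} via the chain rule and the Lyapunov equation $Lyap(A_{x_0,\xi_0}, B_{x_0,\xi_0})$. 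Then I would define the recorded energy $E_R^\eps(t) := \iint_{\R^d\times\R^d} W^\eps(t,x,\xi) : M(t,x,\xi)\, \dd x\, \dd\xi$ (note $M$ does not actually depend on $\eps$, only $W^\eps$ does), and compute its time derivative by pairing the $W^\eps$-equation against $M$ and the $M$-equation against $W^\eps$ in the duality $S(1)'_k$–$S(1)$: the transport terms cancel after an integration by parts (using $\Div v_H = 0$), the $A$-terms cancel against the $A^*,A$-terms, leaving $\frac{d}{dt} E_R^\eps = \iint W^\eps : B^*B + O(\eps)$. Recalling \eqref{q-w}, $\iint W^\eps[\bu^\eps] : (B^*B) = \langle \w[B^*B]\bu^\eps,\bu^\eps\rangle$, and since $\w[B^*B] = \w[B]^*\w[B] + O(\eps)$ by Lemma \ref{lem:comp} and Lemma \ref{lem:adj}, this equals $\|\w[B]\bu^\eps\|_{L^2}^2 + O(\eps)\|\bu^\eps\|_{L^2}^2$. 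Integrating on $[0,T]$ and using $E_R^\eps(0) = 0$ gives
\begin{equation*}
E_R^\eps(T) = \int_0^T \|\w[B]\bu^\eps(t)\|_{L^2}^2\, \dd t + O(\eps),
\end{equation*}
the microlocal analogue of \eqref{toy estim} and \eqref{palab1}; here one must check the $O(\eps)$ is uniform, which follows from the $L^2$-conservation $\|\bu^\eps(t)\|_{L^2} = \|\bu_0^\eps\|_{L^2}$ (the principal symbol is skew-Hermitian) together with the uniform bounds $\mathcal N_k(M(t,\cdot)), \mathcal N_k(B^*B) \le C$ on $[0,T]$ coming from smoothness of $H, A, B$ and Gronwall on the flow and the Lyapunov ODE.

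Finally I would pass to the limit. On the numerator side, $\|\bu^\eps(T,\cdot)\|_{L^2}^2 = \iint W^\eps[\bu^\eps](T) : \Id$ by \eqref{total-marg} (the phase-space analogue of \eqref{palab2}). The localization hypothesis says $W^\eps[\bu_0^\eps] \rightharpoonup (b_0\otimes b_0)\delta_{x_0,\xi_0}$ in $S(1)'$; I would propagate this to time $T$ by arguing that $W^\eps[\bu^\eps(T)]$ converges weak-$*$ in $S(1)'$ to the solution at time $T$ of the \emph{limiting} Liouville-type transport equation $\de_t \mu + v_H\cdot\nabla_{x,\xi}\mu + A\mu + \mu A^* = 0$ with datum $(b_0\otimes b_0)\delta_{x_0,\xi_0}$ — this is the content of Lemma \ref{lem-tr-mu} (items 2 and 3), applied on $\R^{2d}$ with $V = v_H$ and $\tilde A = A^*$, once one knows the $O(\eps)$ remainder in the $W^\eps$-equation does not spoil the weak-$*$ stability (a standard consequence of the uniform-in-$\eps$ bound on the remainder in $S(1)'_{k}$ plus equicontinuity in time). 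By item 2 of that lemma the limit is $\mu(T) = (\bb_{x_0,\xi_0,b_0}(T)\otimes \bb_{x_0,\xi_0,b_0}(T))\,\delta_{(X_{x_0,\xi_0}(T),\Xi_{x_0,\xi_0}(T))}$. Testing against $\Id \in S(1)$ gives $\|\bu^\eps(T)\|_{L^2}^2 \to |\bb_{x_0,\xi_0,b_0}(T)|^2$, and testing $E_R^\eps(T)$ against $M(T,\cdot)$ — which is continuous and bounded, hence a legitimate (approximate) test function — gives $E_R^\eps(T) \to M(T, X_{x_0,\xi_0}(T), \Xi_{x_0,\xi_0}(T)) : (\bb_{x_0,\xi_0,b_0}(T)\otimes\bb_{x_0,\xi_0,b_0}(T)) = \bb_{x_0,\xi_0,b_0}(T)^* G_T\, \bb_{x_0,\xi_0,b_0}(T)$ by \eqref{compo}-type definition of $M$. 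Combining, $\mathcal C[\bu_0^\eps]^{-1} = E_R^\eps(T)/\|\bu^\eps(T)\|_{L^2}^2 + o(1) \to (\bb_T^{\mathcal N})^* G_T\, \bb_T^{\mathcal N}$ after normalization, which is the claim.

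\textbf{Main obstacle.} The delicate point is not the algebra of the recorded-energy identity — that is a faithful transcription of Theorems \ref{thm0}--\ref{thm00loc} — but the \emph{uniformity and legitimacy of the duality pairings at the Wigner level}: one is pairing $W^\eps[\bu^\eps]$, which lives only in $S(1)'_{\gamma d}$ (a distribution of finite but positive order, with $\eps$-dependent $L^\infty$ bound blowing up like $\eps^{-d}$), against time-dependent symbols $M(t,\cdot)$ and $B^*B$, and one must track that every error term genuinely costs a power of $\eps$ uniformly on $[0,T]$ and uniformly in the Wigner-dual norm — in particular controlling the remainder $\eps^2 \mathcal R_\eps$ of \eqref{ww} after it is paired and integrated in time, and ensuring the test function $M(T,\cdot)$ (only $C^\infty \cap$ bounded, a priori only in $S(1)$ if $A,B,H \in S(1)$ and the flow has bounded derivatives — which needs checking, since flows of $S(1)$ fields need not have all derivatives bounded) can be used against the weak-$*$ limit. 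Handling this cleanly is where most of the real work lies; the WKB/coherent-state examples and the structure of $S(1)$ are exactly what make it go through.
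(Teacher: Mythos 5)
Your proposal reproduces the paper's proof almost line for line: the same symbolic multiplier $M$ defined by transporting the Gramian along the Hamiltonian flow (the paper's \eqref{eq:imply}), the same recorded energy $\iint W^\eps : M$ together with the duality identity pairing the Wigner equation (the paper's Lemma \ref{tr-wi}) against the $M$-equation \eqref{mul-eq} (the paper's Lemma \ref{tr-dua-stab}), and the same limit passage via weak-$*$ stability of phase-space transport with vanishing remainder — which is exactly the paper's Lemma \ref{tr-mu-wig}, the $S(1)'$ analogue of the Lemma \ref{lem-tr-mu} you cite, addressing precisely the point you flag about controlling the $O(\eps)$ remainder. Your closing concern about whether $M(T,\cdot)\in S(1)$ is legitimate but is resolved by Gronwall on the variational equation for the flow: since $H\in S(1)$ gives $v_H$ and all its derivatives bounded, the flow and all its $(x,\xi)$-derivatives stay bounded on $[0,T]$, so $M(T,\cdot)\in S(1)$ as needed.
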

Let us highlight that the interest of the result above is that the cost of observability of initial data which are localized at  $(x_0 ,\xi_0,b_0)$ converges to a limit which can determined by ODEs. 
It can be considered as a broad extension of the result obtained in the case of transport equations in  Theorem \ref{thm00loc}, to general hyperbolic pseudo-differential equations of the form  \eqref{main order diag}, in the high frequency limit, despite the extra dispersive effects. As a matter of fact, 
 the equation \eqref{toy model} can be recast as an equation of the form \eqref{main order diag}
%
with
\begin{equation}\label{toB}
H(x, \xi) = v(x) \cdot \xi \quad \text{and} \quad A (x,\xi) :=  A (x)  -   \frac12  (\Div v(x))  \Id.
\end{equation}
Then the  the bicharacteristic equations \eqref{bich1} reduce to 
\begin{gather}
\label{bich1-transport}
X'_{x_0,\xi_0}(t)= v(X_{x_0, \xi_0} (t)), 
 \,  \text{ and }  \,
\Xi'_{x_0,\xi_0}(t)= -(\nabla_x v(X_{x_0, \xi_0} (t)))^* \Xi_{x_0,\xi_0}(t) \quad \text{for} \; t\in  (0,T) .
\end{gather}
  In particular, the first equation does not depend on $\xi$, translating the lack of dispersion in the case of  transport equations. 
  Moreover the matrix $A (x,\xi)$ defined in \eqref{toB} is the one which is involved in the Lyapounov equation satisfied by the Gramian matrix $ G_{x_0}$, see 
  \eqref{coeff-x}.

\subsection{Proof of Theorem \ref{PDO transport}}

For any $(x,\xi)$, 
  let $G_{x,\xi}$ is  the Gramian matrix microlocalized in $(x,\xi)$ as given by Definition \ref{flot-Gr}.
We define the symbolic multiplier $M(t,x,\xi)$
  by setting, for any $(t,x,\xi)$, 
  \begin{equation} \label{eq:imply}
      G_{x,\xi}(t)=:M(t,(X_{x, \xi} (t),\Xi_{x,\xi}(t))).
  \end{equation}
  %
  As the Hamiltonian flow is invertible, see  Remark \ref{Liouville}, the above $M$ is uniquely defined.
Similarly to the reformulation in  \eqref{palab1} and  \eqref{palab1} performed in the non-dispersive case in 
 Section  \ref{sec-local}, we are going to use a reformulation of the observability cost in terms of 
 a  recorded energy by the use of a multiplier. The latter is precisely chosen as  the symbolic multiplier $M(t,x,\xi)$, and the corresponding recorded energy
is then set as
 \begin{align}\label{eq:energy-rec}
    \iint_{\R^d\times\R^d}\mathcal W^\eps(t,x,\xi):M(t,x,\xi)\,\dd x\dd\xi=\langle\w[M(t,x,\xi)]\bu,\bu\rangle_{L^2(\R^d)} ,
\end{align}
 where we use the shorthand notation $\mathcal W^\eps$ for the Wigner matrix transform $\mathcal W^\eps  [\bu^\eps]$, recalling the property \eqref{q-w}. \par \ 

 Let $T>0$ and $(\bu_0^\varepsilon)_{\eps \in (0,1]}$ be bounded in $L^2(\R^d ; \R^N)$. 
A key point of the proof of Theorem \ref{PDO transport} is to establish that 
as $ \eps \rightarrow 0 $,  the observability cost $\mathcal C[\bu_0^\eps]$ satisfies 
\begin{equation}
  \label{ratio}
\mathcal C[\bu_0^\eps]=\frac{\int_{\R^d\times\R^d} \mathcal W^\eps (T,\cdot):\Id\,\dd x\,\dd\xi}{\int_{\R^d\times\R^d}\mathcal W^\eps(T,\cdot):M(T,\cdot) \,\dd x\,\dd\xi}+o(1).
\end{equation}
Recalling \eqref{total-marg}, we already have: 
    \begin{align}\label{eq-est2}
     \|\bu(T,\cdot)\|_{L^2}^2
&=  \iint_{\R^d\times\R^d}  \mathcal W^\eps (T,\cdot)  : \Id \, \dd x\dd\xi   .
\end{align}
The reformulation of the denominator is more subtle. 
First, by \eqref{q-w}, the properties of the Frobenius product and \eqref{ww}, we observe that, at any time, 
\begin{align}
\|\w[B] \bu\|_{L^2}^2 =
\iint_{\R^d\times\R^d} \Big(\mathcal W^\eps  :  B^*B \Big) \, \dd x\dd\xi 
+O(\eps).
\end{align}
Then, using the chain rule  and \eqref{bich1},  we observe that 
\begin{align*}
  G'_{x,\xi} (t)= \Big( \de_t M +\mdiv(Mv_H) \Big)(t,X_{x, \xi} (t) , \Xi_{x, \xi}(t) ) ,
\end{align*}
so that, since $G_{x,\xi}$ verifies the differential  Lyapunov equation $Lyap(A_{x,\xi},B_{x,\xi})$, we infer that the symbolic multiplier $M$  satisfies the following PDE:
\begin{equation}
\label{mul-eq}
    \left\{
    \begin{aligned}
    \de_tM+\mdiv(Mv_H)-MA-A^*M&=B^*B,\\
    M(0)&=0,
    \end{aligned}
    \right.
\end{equation}
 where $B^*B$ appears as a source term.
On the other hand, the dynamics of the Wigner matrix $\mathcal W^\eps $ is given by the following result. 
\begin{Lemma}
\label{tr-wi}
There exists a remainder term $\mathcal R_\eps$ in $ (S(1))_{\gamma d+2}'$ with $\|\mathcal R_\eps\|_{\mathcal S'_{\gamma d+2}}=O(\eps)$ such that 
\begin{equation}
  \label{eq-p-w}
\de_t\mathcal W^\eps +  \mdiv (\mathcal W^\eps  v_{H} ) 
 + A\mathcal W^\eps + \mathcal W^\eps A^* =\mathcal R_\eps.
\end{equation}
\end{Lemma}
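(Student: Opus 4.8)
The plan is to differentiate the Wigner matrix $\mathcal W^\eps = W^\eps[\bu^\eps]$ in time, feed in the equation \eqref{main order diag} rewritten as $\eps\de_t\bu^\eps = -\w[iH\Id]\bu^\eps - \eps\w[A]\bu^\eps$, and then replace each Wigner transform of a quantization acting on $\bu^\eps$ by the symbolic expansion \eqref{ww}. Concretely, by bilinearity of $(u,v)\mapsto W^\eps[u,v]$ and absorbing the factor $\eps$ into the arguments, one first writes
\begin{multline*}
\eps\de_t\mathcal W^\eps = -W^\eps[\w[iH\Id]\bu^\eps,\bu^\eps] - W^\eps[\bu^\eps,\w[iH\Id]\bu^\eps] \\ - \eps W^\eps[\w[A]\bu^\eps,\bu^\eps] - \eps W^\eps[\bu^\eps,\w[A]\bu^\eps].
\end{multline*}
For the two terms where the quantization sits in the first slot I would apply \eqref{ww} directly, in its matrix-valued form (a symbol in the left slot of $W^\eps[\cdot,\cdot]$ producing left multiplication by that symbol, the Poisson bracket of a scalar and a matrix being taken entrywise); for the two terms where it sits in the second slot I would first use the symmetry $W^\eps[u,\w[a]v] = (W^\eps[\w[a]v,u])^*$ coming from \eqref{sym wigner}, then conjugate, using also $(\mathcal W^\eps)^* = \mathcal W^\eps$, again by \eqref{sym wigner}.

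The crux is a cancellation at the order $\eps^{-1}$. Applying \eqref{ww} with $a=iH\Id$ and using $\tfrac\eps{2i}\{iH\Id,\cdot\}=\tfrac\eps2\{H,\cdot\}$ gives $W^\eps[\w[iH\Id]\bu^\eps,\bu^\eps] = iH\,\mathcal W^\eps + \tfrac\eps2\{H,\mathcal W^\eps\}+\eps^2\mathcal R_\eps$; taking the adjoint and using that $H$ is a real scalar, that $(\mathcal W^\eps)^*=\mathcal W^\eps$, and that transposition commutes with $\{H,\cdot\}$, the second term becomes $-iH\,\mathcal W^\eps+\tfrac\eps2\{H,\mathcal W^\eps\}+\eps^2\mathcal R_\eps$. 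The two copies of $iH\,\mathcal W^\eps$ then cancel, leaving $-\eps\{H,\mathcal W^\eps\}+\eps^2\mathcal R_\eps$; and since $v_H=(\nabla_\xi H,-\nabla_x H)$ is divergence free (Remark \ref{Liouville}) one has the exact identity $\{H,\mathcal W^\eps\} = v_H\cdot\nabla_{x,\xi}\mathcal W^\eps = \mdiv(\mathcal W^\eps v_H)$, the matrix structure playing no role. For the remaining two terms, which already carry a prefactor $\eps$, \eqref{ww} and \eqref{sym wigner} give $-\eps A\,\mathcal W^\eps - \eps\,\mathcal W^\eps A^* + \eps^2\mathcal R_\eps$, the Poisson-bracket corrections being of order $\eps^2$ (and $\{A,\mathcal W^\eps\}$ lying in $(S(1))'_{\gamma d+1}$ with norm controlled by $\|\bu^\eps\|_{L^2}^2$). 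Collecting everything and dividing by $\eps$ would then produce exactly \eqref{eq-p-w}.

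The delicate point, and the one I expect to be the main obstacle, is the bookkeeping of the remainders. By \eqref{ww}, each $\mathcal R_\eps$ above lies in $(S(1))'_{\gamma d+2}$ with $\|\mathcal R_\eps\|_{\mathcal S'_{\gamma d+2}}$ bounded, uniformly in $\eps$, by a constant times $\|\bu^\eps(t)\|_{L^2}^2$; summing finitely many of them, multiplying by $\eps^2$ and dividing by $\eps$ leaves a remainder in the same class of norm $O(\eps)\sup_{[0,T]}\|\bu^\eps(t)\|_{L^2}^2$. To close the estimate one needs the a priori bound $\sup_{t\in[0,T]}\|\bu^\eps(t)\|_{L^2}\le C\|\bu_0^\eps\|_{L^2}$ with $C$ independent of $\eps$, which I would obtain from \eqref{main order diag}: since $(iH\Id)^*=-iH\Id$, Lemma \ref{lem:adj} shows $\w[iH\Id]$ is skew-adjoint, hence $\tfrac{d}{dt}\|\bu^\eps\|_{L^2}^2 = -2\Re\langle\w[A]\bu^\eps,\bu^\eps\rangle$, and one concludes by \eqref{norm op} and Grönwall's lemma. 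A secondary subtlety is simply that the calculus identities \eqref{eq:composition} and \eqref{ww} are stated for scalar symbols and must be invoked in their matrix-valued versions, with the correct non-commutative placement of the symbols and their adjoints; none of this affects the leading-order cancellation. As an alternative, the whole argument can be run weakly, testing \eqref{eq-p-w} against an arbitrary $\phi\in S(1)$, rewriting every term via \eqref{q-w} as a time derivative of $\langle\w[\phi^*]\bu^\eps,\bu^\eps\rangle$ and invoking Lemmas \ref{lem:adj} and \ref{lem:comp}; the $\eps^{-1}$ cancellation then reads as the vanishing of the symbol commutator of $\phi^*$ with $iH\Id$.
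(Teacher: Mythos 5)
Your proposal is correct and follows essentially the same route as the paper: differentiate $\mathcal W^\eps$ in time, use \eqref{sym wigner} to reduce to $W^\eps[\de_t\bu,\bu]$ plus its adjoint, expand via \eqref{ww} so that the $O(\eps^{-1})$ terms cancel (since $H$ is real scalar and $\mathcal W^\eps$ Hermitian), yielding $-\{H,\mathcal W^\eps\}-A\mathcal W^\eps-\mathcal W^\eps A^*+O(\eps)$, and identify $\{H,\mathcal W^\eps\}=\mdiv(\mathcal W^\eps v_H)$ by the divergence-freeness of $v_H$. The only difference is cosmetic bookkeeping: you split the symbol into its $H$ and $A$ parts and make explicit the uniform $L^2$ bound on $\bu^\eps$ via skew-adjointness of $\w[iH\Id]$ and Gr\"onwall, which the paper leaves implicit.
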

\begin{proof}
By bilinearity and the symmetry property \eqref{sym wigner},  the Wigner matrix transform  $\mathcal W^\eps$ satisfies 
 \begin{align}\label{deriv2}
\de_t  \mathcal W^\eps &= 
    W^\eps[\de_t\bu,\bu]+W^\eps[\de_t\bu,\bu]^*.
    \end{align}
 Moreover, by \eqref{main order diag}, it follows that
\begin{align*}
    W^\eps[\de_t\bu,\bu]
    &=-W^\eps\left[\w\left[\frac i\eps H(x, \xi)\Id + A(x, \xi)\right]\bu, \bu \right],
\end{align*}
so that, using the property \eqref{ww}, we arrive at 
 $$W^\eps[\de_t\bu,\bu]=-\frac i\eps H \mathcal W^\eps - \frac12\{H,\mathcal W^\eps\}-A\mathcal W^\eps+O_{(S(1))_{\gamma d+2}'}(\eps).$$
Substituting into \eqref{deriv2} and using that $H$ is a real scalar symbol,   we obtain 
\begin{equation}
  \label{eq-p-w2}
\de_t\mathcal W^\eps + \{H\Id,\mathcal W^\eps\} + A\mathcal W^\eps + \mathcal W^\eps A^* = O_{(S(1))_{\gamma d+2}'}(\eps).
\end{equation}
Recalling that   $v_{H} $ is the divergence free vector field  defined by \eqref{ham-field} the conclusion follows. 
\end{proof}

As in the proof of Theorem \ref{thm00loc} we rely on a duality argument given by the following result.
\begin{Lemma}
\label{tr-dua-stab}
Let $W(t,x,\xi)$ and $M(t,x,\xi)$ two matrix-valued time-dependent symbols such that \begin{equation*}\left\{\begin{aligned}\de_tW+\mdiv(Wv_H)+AW+WA^*&=\mathcal R,\\
\de_tM+\mdiv(Mv_H)-MA-A^*M&=\mathcal F.\end{aligned}\right.\end{equation*}
Then
    \begin{align}
&\Big( \iint_{\R^d\times\R^d} \Big( W : M \Big) \, \dd x\dd\xi \Big) (T)
=
\Big( \iint_{\R^d\times\R^d} \Big( W : M \Big) \, \dd x\dd\xi \Big)(0)
 \\\nonumber \quad  \quad \quad &+
\int_0^T 
\Big( \iint_{\R^d\times\R^d} \Big(\mathcal R : M+ W : \mathcal F \Big) \, \dd x\dd\xi 
\Big)(t) \, dt
 .
\end{align}
\end{Lemma}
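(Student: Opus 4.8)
The plan is to reproduce, at the level of phase-space symbols, the duality computation already used for ODEs at the end of Section~\ref{sec-o} and for the transport PDE in Lemma~\ref{lem-duali1}: differentiate the pairing $t\mapsto\iint_{\R^d\times\R^d} W:M\,\dd x\,\dd\xi$ in time, substitute the two evolution equations satisfied by $W$ and $M$, and check that the transport terms and the purely algebraic zeroth-order terms cancel, so that only the source contributions $\mathcal R:M$ and $W:\mathcal F$ survive. Integrating from $0$ to $T$ then yields the identity.

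Concretely, I would first differentiate under the integral sign — legitimate under the regularity assumed on $W$, $M$, $\mathcal R$, $\mathcal F$ — and use the bilinearity of the Frobenius product to write $\frac{\dd}{\dd t}\iint W:M = \iint(\de_t W:M + W:\de_t M)$. Substituting $\de_t W = \mathcal R - \mdiv(W v_H) - AW - WA^*$ and $\de_t M = \mathcal F - \mdiv(M v_H) + MA + A^*M$, the integrand becomes $\mathcal R:M + W:\mathcal F$ minus the transport pair $\mdiv(W v_H):M + W:\mdiv(M v_H)$, minus the two zeroth-order pairs $AW:M - W:A^*M$ and $WA^*:M - W:MA$. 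Then I would dispose of these three pairs. Since $v_H$ is divergence free, $\mdiv(W v_H)=(v_H\cdot\nabla_{x,\xi})W$ and likewise for $M$, so the transport pair equals $(v_H\cdot\nabla_{x,\xi})(W:M)=\mdiv\big((W:M)v_H\big)$, whose integral over $\R^d\times\R^d$ vanishes by the divergence theorem. For the zeroth-order pairs, using $X:Y=\tr(X^*Y)$ together with the cyclicity of the trace, $AW:M=\tr(W^*A^*M)=W:A^*M$ and $WA^*:M=\tr(AW^*M)=\tr(W^*MA)=W:MA$ — these are exactly the contraction rules \eqref{rules} — so both pairs vanish. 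This leaves $\frac{\dd}{\dd t}\iint W:M = \iint(\mathcal R:M + W:\mathcal F)$, and integrating in time gives the claim.

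The only genuinely delicate point, which I expect to be the main obstacle, is the rigorous meaning of $\iint W:M\,\dd x\,\dd\xi$ and of the integration by parts in the transport term: in the application $W$ will stand for a Wigner matrix $\mathcal W^\eps$, an element of $S(1)'$ rather than an integrable function, while $M$ is a genuine $S(1)$ symbol. To handle this I would read every pairing $\iint\,\cdot:\cdot$ as the duality bracket $\langle\cdot,\cdot\rangle_{S(1)',S(1)}$, obtain the time derivative by testing the equation for $W$ against $M$ and the equation for $M$ against $W$, and observe that all the cancellations above are purely algebraic and survive verbatim at the level of the bracket — the divergence-theorem step becoming the fact that the first-order operator $v_H\cdot\nabla_{x,\xi}$ is skew-adjoint for the $S(1)'\times S(1)$ pairing when $\mdiv v_H=0$. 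For symbols that happen to be integrable with integrable derivatives in $(x,\xi)$, one may instead conclude by a routine cutoff and approximation argument, which is why the statement is phrased under the standing assumption that $M$ and $F$ are regular enough.
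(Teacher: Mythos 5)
Your proof is correct and follows essentially the same route as the paper: differentiate $t\mapsto\iint W:M\,\dd x\,\dd\xi$, substitute both evolution equations, cancel the transport terms via $\mdiv v_H=0$ and integration by parts, cancel the zeroth-order terms via the Frobenius/trace identities, and integrate in time. Your additional remark on reading the pairing as the $S(1)'$--$S(1)$ duality bracket when $W$ is a Wigner matrix is a sensible refinement that the paper leaves implicit.
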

%
\begin{proof}
 We observe that the time derivative $E'$ of 
$$E(t):= \iint_{\R^d\times\R^d} M:W \,\dd x\,\dd\xi, $$
satisfies
\begin{align}\notag E'(t)=&\iint_{\R^d\times\R^d}\left(-\mdiv(Mv_H)+MA+A^*M+\mathcal F\right):W\,\dd x\,\dd\xi\\&+\iint_{\R^d\times\R^d}M:\left(-\mdiv(Wv_H)-AW-WA^*+\mathcal R\right)\,\dd x\,\dd\xi.\label{w:m}\end{align}
Using that $\mdiv(v_H)=0$, an integration by parts yields 
$$\iint_{\R^d\times\R^d} \mdiv(Mv_H):W\,\dd x\,\dd\xi=-\iint_{\R^d\times\R^d}M:\mdiv(Wv_H)\,\dd x\,\dd\xi.$$
For the other terms, simple computations give
$$(MA+A^*M):W=M:(AW+WA^*).$$
Using those two equalities in \eqref{w:m} leads to \begin{equation}
    E'(t)=\iint_{\R^d\times\R^d}(\mathcal F: W + M:\mathcal R)\,\dd x\,\dd\xi.
\end{equation}
Integrating this equality in time, from $0$ to $T$, we conclude the proof of Lemma \ref{tr-dua-stab}.
\end{proof}
We now  apply Lemma \ref{tr-dua-stab} with $W=\mathcal W^\eps$ and $M$, which yields, thanks to \eqref{mul-eq} and Lemma \ref{tr-wi}, 
    \begin{align}\label{eq-est1}
&\Big( \iint_{\R^d\times\R^d} \Big( \mathcal W^\eps  : M \Big) \, \dd x\dd\xi \Big) (T)
= 
\int_0^T  \iint_{\R^d\times\R^d} \Big(\mathcal W^\eps : B^*B \Big) \, \dd x\dd\xi \Big)(t) \, dt
+ O(\eps).
\end{align}
From \eqref{eq-est2} and \eqref{eq-est1},   we deduce 
  \eqref{ratio}. \ \par \

  Now let $x_0\in \R^d, \xi_0\in \R^d\setminus\{0\}$ and $\bb_0 \in  \mathcal S^{N-1}$ and 
  assume that $(\bu_0^\varepsilon)_{\eps \in (0,1]}$  is localized at $(x_0 ,\xi_0,b_0)$ in the sense of Definition \ref{def:loc}.
 To pass to the limit in  \eqref{ratio}
   we rely on the following intermediate result. 
\begin{Lemma} \label{tr-mu-wig}
Let $n, N \in \N^*$ and $T >0$.
Let $V$ be a regular field of vector fields from $\R^d\times\R^d$ to itself, $A,\tilde A\in S(1)$ be two smooth vector fields of matrices in $\R^{N \times N}$, and $(\mathcal R_\eps)_{\eps\in(0,1]}\subset (S(1))'_{\gamma d+2}$ such that \begin{equation}\label{lim Re}\lim_{\eps\to 0}\|\mathcal R_\eps(t)\|_{\mathcal S'_{\gamma d+2}}=0,\text{ uniformly in time.}\end{equation}
 Let 
 $$ \mathfrak C := C([0,T]; (S(1))' - w* ),$$ 
 the space of the distributions in the dual space to $S(1)$, depending on time in a continuous way when  $(S(1))'$ is endowed by its weak-* topology.
Then
\begin{enumerate}
\item For any $\mu_0$ in  $(S(1))'$, $\eps$ in $(0,1]$, there exists a unique solution 
$\mu \in \mathfrak C,$
to the PDE:
\begin{equation}
\partial_t  \mu +  \mdiv (\mu V ) + A  \mu +\mu \tilde A = \mathcal R_\eps ,
\end{equation}
with initial value $\mu_0$. 
\item If the initial data $\mu_0$ is of the form 
 $\mu_0^{x_0,\xi_0,P_0} = P_0 \delta_{x_0,\xi_0}$ with  
 $x_0,\xi_0\in \R^d$ and $P_0 \in \R^{N \times N}$
  then the unique corresponding solution to the homogeneous equation is given by 
\begin{equation}
  \label{tr-for}
 \mu^{x_0,\xi_0,P_0}  (t,\cdot) = P(t)  \delta_{x(t),\xi(t)} ,
\end{equation}
where  $(x,\xi)$ solves the nonlinear ODE: 
\begin{equation}
(x,\xi)' = V(x,\xi) \quad  \text{ and } \quad (x,\xi)(0)=(x_0,\xi_0),
\end{equation}
 and 
$P$ the linear ODE: 
\begin{equation}
P' + A(x(\cdot),\xi(\cdot))P+P \tilde A(x(\cdot),\xi(\cdot))=0 \quad  \text{ and } \quad P(0)=P_0.
\end{equation}
\item Finally, if $(\mu_0^\eps)_{\eps\in(0,1]}$ converges weakly-* to  $\mu_0^{x_0,\xi_0,P_0}$ in $(S(1))'$ as $\eps$ goes to 0, then the corresponding solution 
$(\mu^\eps)_{(0,1]}$  converges to  $\mu^{x_0,\xi_0,M_0} $ in $\mathfrak C$.
\end{enumerate}
\end{Lemma}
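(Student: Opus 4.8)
\textbf{Proof plan for Lemma \ref{tr-mu-wig}.} The plan is to prove the three items in order, following closely the template of Lemma \ref{lem-tr-mu}, with the essential difference that the PDE now carries a (small but nonzero) source term $\mathcal R_\eps$ and that the underlying space is $(S(1))'$ rather than the space of measures. First I would establish item (1), existence and uniqueness in $\mathfrak C$. The natural approach is a duality/transposition argument: $\mu$ is defined by its action on test symbols $\varphi \in S(1)$ through the weak formulation $\frac{\dd}{\dd t}\langle \mu, \varphi\rangle = \langle \mu, \de_t\varphi + v_H\cdot\nabla\varphi\rangle - \langle A\mu + \mu\tilde A, \varphi\rangle + \langle \mathcal R_\eps, \varphi\rangle$, where one uses $\mdiv v_H = 0$ to move the transport term onto the test symbol. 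Since the characteristic flow $(x,\xi)\mapsto (X_{x,\xi}(t),\Xi_{x,\xi}(t))$ preserves $S(1)$ (being a smooth diffeomorphism with derivatives controlled on $[0,T]$, by Gr\"onwall applied to the variational equation), the adjoint flow transports $S(1)$ to itself continuously; combined with the Duhamel formula to absorb the zeroth-order matrix terms and the source, one gets existence, uniqueness, and continuity in time, exactly as in \cite{DiLi}.

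Next I would treat item (2). For the homogeneous equation with atomic initial datum $P_0\delta_{x_0,\xi_0}$, one simply checks that the ansatz \eqref{tr-for}, with $(x(t),\xi(t))$ solving the bicharacteristic ODE and $P(t)$ solving the linear matrix ODE, is a weak solution: test against $\varphi\in S(1)$, so that $\langle P(t)\delta_{x(t),\xi(t)},\varphi\rangle = P(t):\varphi(x(t),\xi(t))^{?}$ — more precisely one pairs the matrix $P(t)$ with the matrix test symbol via the Frobenius product — and differentiate in time, using the chain rule $\frac{\dd}{\dd t}\varphi(x(t),\xi(t)) = (v_H\cdot\nabla\varphi)(x(t),\xi(t))$ together with the ODE for $P$. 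This reproduces the weak formulation, and by the uniqueness from item (1) this is \emph{the} solution. General (non-atomic) initial data are then recovered by superposition, writing $\mu_0 = \int P_0\,\dd(\text{measure})$; this is where one uses that $(S(1))'$ solutions can be disintegrated along characteristics.

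Finally, item (3): stability. Given $\mu_0^\eps \rightharpoonup \mu_0^{x_0,\xi_0,P_0}$ weakly-* in $(S(1))'$, I would show $\mu^\eps \to \mu^{x_0,\xi_0,P_0}$ in $\mathfrak C$. By linearity, split $\mu^\eps = \nu^\eps + \rho^\eps$, where $\nu^\eps$ solves the homogeneous equation with datum $\mu_0^\eps$ and $\rho^\eps$ solves the equation with zero datum and source $\mathcal R_\eps$. For $\nu^\eps$, the weak formulation together with the fact that the flow and the linear matrix ODE act continuously on $S(1)$ (uniformly on $[0,T]$) gives, for each fixed $\varphi$, $\langle \nu^\eps(t),\varphi\rangle = \langle \mu_0^\eps, \Phi_t\varphi\rangle \to \langle \mu_0^{x_0,\xi_0,P_0},\Phi_t\varphi\rangle = \langle \mu^{x_0,\xi_0,P_0}(t),\varphi\rangle$, where $\Phi_t$ is the explicit (matrix-valued) transport-conjugation operator on test symbols; uniform-in-time convergence follows since $t\mapsto \Phi_t\varphi$ is continuous into $S(1)$ with bounds controlled by finitely many seminorms $\mathcal N_k$. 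For $\rho^\eps$, Duhamel plus \eqref{lim Re} and the uniform operator bounds give $\sup_{t\in[0,T]}\|\rho^\eps(t)\|_{\mathcal S'_{k'}} \to 0$ for a suitable $k'$, hence $\rho^\eps \to 0$ in $\mathfrak C$. Combining the two pieces finishes the proof.

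\textbf{Main obstacle.} The delicate point is the functional-analytic bookkeeping of the order of the distributions: the source $\mathcal R_\eps$ lives in $(S(1))'_{\gamma d+2}$, so one must check that the Duhamel iteration does not lose regularity indefinitely, i.e.\ that the transport-conjugation semigroup maps each $S(1)'_k$ into itself (equivalently, that its adjoint is bounded on $S(1)$ in each $\mathcal N_k$), with constants uniform in $\eps$ and in $t\in[0,T]$. This requires the quantitative control of the Hamiltonian flow's Jacobian and all higher derivatives via Gr\"onwall on the variational equations — routine but needing care — and is the step where the precise choice of the index $\gamma d + 2$ in Lemma \ref{tr-wi} must be threaded through. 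Everything else is a transcription of the measure-valued argument of Lemma \ref{lem-tr-mu} into the $(S(1))'$ setting.
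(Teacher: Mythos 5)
Your proposal is correct and takes essentially the same route as the paper: the paper's proof is a two-sentence sketch invoking the characteristics method for existence and the explicit atomic solution, and a duality argument for uniqueness and stability, and your proposal is a careful unpacking of precisely those two ingredients (transposition onto $S(1)$ test symbols transported by the backward flow, Duhamel to absorb the zeroth-order matrix terms and the source $\mathcal R_\eps$, and the splitting $\mu^\eps=\nu^\eps+\rho^\eps$ for the stability part). The one point you should make more explicit is how pointwise-in-$t$ weak-* convergence of $\nu^\eps(t)$ is upgraded to convergence in $\mathfrak C$: since $t\mapsto\Phi_t\varphi$ is continuous from the compact interval $[0,T]$ into $S(1)$, its image is compact in the Fréchet topology, and combined with the uniform boundedness of $(\mu_0^\eps)_\eps$ (Banach–Steinhaus) this yields $\sup_{t\in[0,T]}\lvert\langle\mu_0^\eps-\mu_0,\Phi_t\varphi\rangle\rvert\to0$, which you gesture at but do not quite state.
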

 \begin{proof}
The existence of the solution, as well as the form of the solution associated with the initial datum $M_0\delta_{x_0,\xi_0}$, follows from the characteristics method. The uniqueness and stability can be proved by a standard duality argument.
 \end{proof}
By the second item of Lemma \ref{tr-mu-wig}, 
for any $(x_0,\xi_0,\bb_0)\in \R^{d}  \times \R^{d} \times \R^N$, 
 the unique solution $\mu^{x_0,\xi_0,b_0}$ to  
\begin{equation}
\label{t-wig-re}
\de_t \mathcal \mu^{x_0,\xi_0,b_0} + \mdiv ( \mathcal \mu^{x_0,\xi_0,b_0} v_{H} ) + A\mathcal \mu^{x_0,\xi_0,b_0} + \mathcal  \mu\mu^{x_0,\xi_0,b_0} A^* = 0 .
  \end{equation}
        associated with the initial data
    $\mu_0^{x_0,\xi_0,b_0} := (\bb_0 \otimes \bb_0) \delta_{x_0,\xi_0} $
    is 
    $$\mu^{x_0,\xi_0,b_0} = (\bb_{x_0,\xi_0,b_0}(t) \otimes \bb_{x_0,\xi_0,b_0}(t))  \delta_{X_{x_0 ,\xi_0 }(t), \Xi_{x_0 ,\xi_0 }(t)} ,$$
    where we recall that  the Hamiltonian flow  $(X_{x_0 ,\xi_0 }(t), \Xi_{x_0 ,\xi_0 }(t))$ is defined by  the 
 equations \eqref{bich1}  and  \eqref{bich3}.
Moreover,  by the third item of Lemma \ref{tr-mu-wig}, since, as  $\eps \rightarrow 0 $,
 $\mathcal W^\eps  \mathrel{\overset{\makebox[0pt]{\mbox{\normalfont\tiny\sffamily w*}}}{\to}} (b_0 \otimes b_0) \delta_{x_0 ,\xi_0} $   in $ \mathcal (S(1))'$,  
 we have that, as $\eps \rightarrow 0 $,
\begin{equation}\label{conv-W}\mathcal W^\eps(T)\mathrel{\overset{\makebox[0pt]{\mbox{\normalfont\tiny\sffamily w*}}}{\to}} (\bb_{x_0,\xi_0,\bb_0}(t)\otimes\bb_{x_0,\xi_0,\bb_0}(T))\delta_{X_{x_0,\xi_0}(T),\Xi_{x_0,\xi_0}(T)}.\end{equation}
Passing to the limit in  \eqref{ratio} and recalling \eqref{eq:imply} 
we conclude the proof of Theorem \ref{PDO transport}.


\section{Application to the incompressible Euler equations linearized at a Couette flow}\label{sec:example-euler}

An interesting example of application of our method is represented by the incompressible Euler equations in $\R^2$, in velocity form, which we linearize at a steady state $\overline{\bu}$ satisfying  $\Div(\overline\bu)=0$, so that the velocity field $\bu=\bu(t,x) \in \R^N$ and the  pressure $p=p(t,x) \in \R$ satisfy 
\begin{equation}\label{incomp euler}\left\{\begin{aligned}\de_t\bu+(\overline{\bu}\cdot\nabla)\bu+(\bu\cdot\nabla)\overline{\bu} + \nabla p&=0\\\Div(\bu)&=0.\end{aligned}\right.\end{equation}
Using  the Leray projection onto divergence-free vector fields,
the semiclassical approximation of \eqref{incomp euler} then reads 
\begin{equation}\label{euler fourier}\eps \de_t\bu + \w[i H (x,\xi)\Id + \eps A(x, \xi)]\bu = 0,\end{equation} 
where 
\begin{equation}\label{ssy}
H (x,\xi)=\overline\bu(x)\cdot\xi, \quad A(x,\xi)=\left(\Id - 2 \frac{\xi\otimes\xi}{|\xi|^2}\right) D\overline{\bu}.
\end{equation} 
Since we want to study the behavior of solutions that are localized, and in high-frequency, we may multiply the symbols $A$ and $H$ by a cut-off function that is equal to $0$ near $\xi=0$ to avoid the singularity of $A$ at $\xi=0$, so that one may   consider a symbol $A$ in $S(1)$ which is given by \eqref{ssy} for $\xi=0$ outside of a ball of radius say $1/2$. \ \par \ 

We consider the case where the steady state $\overline{\bu}$ is the Couette flow $\overline{\bu}=(x_2, 0)$, and where 
 the observation matrix is 
  $$B=\begin{pmatrix}1&0\\0&0\end{pmatrix}.$$
  
  Then for any $x_0,\xi_0$ in $\R^2$, the Gramian matrix $G_{x_0,\xi_0}$ microlocalized at  $(x_0,\xi_0)$ is given by  
  $$G_{x_0,\xi_0}(T)= \int_0^T(R_{x_0,\xi_0}(t,T)^*B^*BR_{x_0,\xi_0}(t,T))\, \dd t,$$
  where $R_{x_0,\xi_0}(t,T)$ is the resolvent matrix, given by solving the linear system
   $$\frac{\dd}{\dd t}R_{x_0,\xi_0}(t,T)=-A(X_{x_0,\xi_0}(t), \Xi_{x_0,\xi_0}(t)) R_{x_0,\xi_0}(t,T), \quad R(T,T)=\Id.$$

A tedious computation yields, with the notation $\xi_0=\left(\xi_{1,0}, \xi_{2,0}\right)$, 
 $$R_{x_0,\xi_0}(t,T)=\begin{pmatrix}1&f(t,\xi_0)\\0&g(t,\xi_0)\end{pmatrix},$$
where
\begin{equation}\label{eq:f}\begin{aligned}
f(t,\xi_0)=\begin{cases}T-t &\text{if }\xi_{1,0}=0,\\-\frac{(\xi_{2,0}-t\xi_{1,0})(\xi_{1,0}^2+(\xi_{2,0}-T\xi_{1,0})^2)}{\xi_{1,0}(\xi_{1,0}^2+(\xi_{2,0}-t\xi_{1,0})^2)}+\frac{\xi_{2,0}-T\xi_{1,0}}{\xi_{1,0}}&\text{otherwise,}\end{cases}\end{aligned}
\end{equation}
 and $$g(t,\xi_0)=\begin{cases}1&\text{if }\xi_{1,0}=0,\\\frac{\xi_{1,0}^2 + (\xi_{2,0}-T\xi_{1,0})^2}{\xi_{1,0}^2+(\xi_{2,0}-t\xi_{1,0})^2}&\text{otherwise.} \end{cases}.
$$
Moreover 
 $$G_{x_0,\xi_0}(T)=\int_0^T \begin{pmatrix}1&f(t,\xi_0)\\ f(t,\xi_0)&f^2(t,\xi_0)\end{pmatrix}\dd t.$$
 We now consider $(x_0,\xi_0,\bb_0)\in \R^2\times(\R^2\setminus\{0\})\times\mathcal S^1$  and  $(\bu_0^\eps)_{\eps\in(0,1]}\subset L^2(\R^2;\R^2)$ a family of initial data localized in $(x_0, \xi_0, \bb_0).$ 
By Theorem \ref{PDO transport}, 
 the observability cost $\mathcal C [\bu_0^\eps]$ satisfies,  as $\eps$ goes to $0$, 
 $$\lim_{\eps\to0}(\cC[\bu_0^\eps])^{-1}=(\bb_T^{\mathcal N})^*G_{x_0,\xi_0}(T)\bb_T^{\mathcal  N},$$with $$\bb_T^{\mathcal N}=\frac{\bb_T}{|\bb_T|},\quad \bb_T=R_{x_0,\xi_0}(T,0)\bb_0=\begin{pmatrix}1&-\frac{f(0,\xi_0)}{g(0,\xi_0)}\\0&\frac1{g(0,\xi_0)}\end{pmatrix}\bb_0.$$
We can notice that, for any value of $\xi_0\ne0$,  the function $f(\cdot,\xi_0)$ is continuous and not constant, so that, by the Cauchy-Schwarz inequality,
\begin{align*}
\det G_{x_0,\xi_0}(T) = T\int_0^T f^2(t, \xi_0) \, \dd t -  \left(\int_0^T f(t, \xi_0) \, \dd t\right)^2 > 0.
\end{align*}
Thus the system is observable in high frequencies for initial data localized in any $(x_0,\xi_0,\bb_0)$, and for any positive time $T>0$. The limit of the observability cost as $\eps$ goes to $0$ is finite, and can be explicitly computed in terms of $\xi_0$ and $\bb_0.$
The short-time and long-time behaviors of the observability cost can be given by further computations.
First, in the case where  $\xi_{1,0}=0$, 
  the Gramian matrix has the simple expression:
\begin{equation*}
    G_{x_0,\xi_0}(T)=\begin{pmatrix}
        T&\frac{T^2}2\\\frac{T^2}2&\frac{T^3}3
    \end{pmatrix}.
\end{equation*}
Immediate computations then yield 
$$\lambda_{\rm{min}}(G_{x_0,\xi_0}(T))=\frac T2+\frac16(T^3-\sqrt{T^6+3T^4+9T^2}),$$ 
so that in particular 
  $\lambda_{\rm{min}}(G_{x_0,\xi_0}(T))^{-1} \sim\frac{12}{T^3}$ for short times 
  and $\lambda_{\rm{min}}(G_{x_0,\xi_0}(T))^{-1} \sim \frac4T$ for long times. 
  Note that $\lambda_{\rm{min}}(G_{x_0,\xi_0}(T))^{-1}$ is the "worst-case" cost for initial data localized in $(x_0,\xi_0)$, namely $$\lambda_{\rm{min}}(G_{x_0,\xi_0}(T))^{-1}=\sup_{\bb_0\in \mathcal S^{N-1}} \{\lim_{\eps\to0}\mathcal C(\bu_0^\eps) : (\bu_0^\eps) \text{ is localized in }(x_0,\xi_0,\bb_0)\}.$$
  The asymptotic behaviour of the cost at 
 short times can be extended to the other values of $\xi_0$. 
 If $\xi_{1,0}\ne0$ and $\xi_{1,0}^2\ne \xi_{2,0}^2$, then by using the Taylor expansion of $f$ at first order, we obtain 
\begin{equation}
    \label{-3}
    \lambda_{\rm{min}}(G_{x_0,\xi_0}(T)) \sim \frac{T^3(\xi_{1,0}^2-\xi_{2,0}^2)^2}{12|\xi_0|^4} .
\end{equation}
 Otherwise, that is if  $\xi_{1,0} \neq 0$ and $\xi_{1,0}^2=\xi_{2,0}^2$, then 
  by using the Taylor expansion of $f$ at second order, we obtain 
  \begin{equation}
    \label{-5}
  \lambda_{\rm{min}}( G_{x_0,\xi_0}(T)) \sim \frac{T^5}{45} .
  \end{equation}
Those estimates are consistent with the results of \cite{seidman1}. Indeed, in the case where $\xi_{1,0}^2\ne\xi_{2,0}^2$, the rank condition is fulfilled on the second bracket, namely $$\text{rank}[B|A_{x_0,\xi_0}(0)B]=2, \text{ while  rank}(B)<2.$$
Then the analysis of \cite{seidman1} gives a cost of order $T^{-3}$, as given by   \eqref{-3}. When $\xi_{1,0}^2=\xi_{2,0}^2$, we need to take one more bracket to verify the rank condition, namely $$\text{rank}[B|A_{x_0,\xi_0}(0)B|(A_{x_0,\xi_0}^2(0)+A'_{x_0,\xi_0}(0))B]=2,\text{ while  rank}[B|A_{x_0,\xi_0}(0)B]<2.$$ The results in \cite{seidman1} then yield a cost of order $T^{-5}$, as given by \eqref{-5}. Our approach furthermore gives an explicit constant.

\section{Case of  systems without crossing modes}

This section is devoted to an extension of the results of Section 
\ref{sec-mc} to more general systems. 

\subsection{Statement of the result}

In this section we extend the previous analysis to the case of pseudo-differential hyperbolic systems of the form: 
\begin{align}\label{hyper-eps}
\de_t \bu + \w\left[\frac i\eps H + A\right]\bu=0, \quad (x, \xi) \in \R^d\times \R^d .
\end{align}
Above, $A$ and $H$ are two matrices of symbols which are supposed to fulfill the following assumption. 
%
  \begin{assumption}\label{hyp-sys} 
\begin{itemize} We assume that $A$ and $H$ satisfy the following.
\item[(H1):]$H=H(x, \xi)$ is a $N\times N$ field of Hermitian  matrices in $S(1)$, and there exist  $1 \le m\le N$ and $C>0$  such that for all $ (x, \xi) \in \R^d\times\R^d$, the matrix $H(x, \xi)$ has  $m$ real semi-simple eigenvalues $\lambda_k(x, \xi)$, for $1\leq k \leq m$, with 
$$|\lambda_j(x, \xi) -\lambda_k (x, \xi)|>C \quad \text{for all} \;  1 \le j\neq k  \le m \le N.$$
\item[(H2):] $A=A(x, \xi)$ is a $N\times N$ field of  matrices in $S(1).$
\end{itemize}
 \end{assumption}
 As a consequence of (H1)
the matrix $\mathcal{U}(x, \xi) :=(\textbf{r}_\ell)_{\ell= 1 ,\cdots N}$ of the right normalized eigenvectors  $(\textbf{r}_\ell)_{\ell= 1 ,\cdots N}$ of $H(x, \xi)$ 
is in $S(1)$, takes values which are unitary matrices, and 
\begin{equation}
  \label{commut}  \hU^* H =\D \hU^*=(\lambda_\ell \textbf{r}_\ell)^*_{\ell=1, \cdots N} ,
\end{equation}
and
 \begin{equation}\label{def:D U and Pi}
    \D(x,\xi):=(\hU^*H\hU)(x,\xi)=\sum_{k=1}^m \lambda_k(x,\xi)\Pi_k\in S(1),
\end{equation}
where the $\left(\Pi_k\right)_{1\le k\le m}$ are the constant orthogonal projectors onto $\ker(\mathcal D(x,\xi)-\lambda_k(x,\xi)\Id),$ which are  independent of $(x,\xi)\in\R^d$. \medskip

Our aim is to answer to Question \ref{qul} in the case of the system \eqref{hyper-eps}, that is to determine, as $\eps \rightarrow 0$, the limit  observability cost $\mathcal C[\bu_0^\varepsilon]$, as given by Definition \ref{def:mcost}, 
of a family of initial data $(\bu_0^\varepsilon)_{\eps \in (0,1]}$  in $L^2(\R^d ; \R^N)$ which is localized at 
$(x_0 ,\xi_0,b_0) \in \R^d \times (\R^d\setminus\{0\})  \times \R^N$, 
for the corresponding solution $\bu$ of the system \eqref{hyper-eps}
and an observability operator $B\in S(1)$.
The main result of this section provides an answer to this question which generalizes the earlier case where a single characteristic was considered. 
Again the result involves a few ingredients, which we now recall in this more general setting. First it  makes use of the Hamiltonian flows associated with each eigenvalue $\lambda_k$, with $ 1\le k\le m$, as  given by Definition \ref{biflotH}. 
For sake of clarity we  explicitely recall this definition below, which involves nonlinear ODEs.

\begin{Definition}\label{flow-HK}
For $1 \le k \le m$, 
for any  $(x,\xi)$, 
we define 
the Hamiltonian flow $\big( X_{k,x,\xi}, \Xi_{k,x,\xi} \big)$
 induced by the $k$-th eigenvalue $\lambda_k(x, \xi)$ as the unique smooth solution of the following Cauchy problem: 
\begin{equation}
\label{bich1 k}
\big( X_{k,x,\xi}, \Xi_{k,x,\xi} \big)'
= v_{\lambda_k}\big(X_{k,x,\xi}, \Xi_{k,x,\xi}\big),\quad \left(X_{k,x,\xi},\Xi_{k,x,\xi}\right)(0)=(x,\xi).
\end{equation} 
where $v_{\lambda_k}$ is the divergence free vector field from $\R^d\times\R^d$ to itself defined by \begin{equation}
    v_{\lambda_k}=\left(\nabla_\xi\lambda_k,-\nabla_x\lambda_k\right)
\end{equation}
\end{Definition}

We also rely on some Gramian matrices associated with each Hamiltonian flow
and with $H$ and $A$, similarly to Definition \ref{flot-Gr}, but with some surprising extra terms, which encode the influence between the various characteristic fields of the system. More precisely, for $1\le k\le m$, let 
 \begin{align}\label{def:Aeff}
        A^{\rm{eff}}_k &:=\Pi_k \hU^*A\hU \Pi_k+ \frac12 \Pi_k \Big( \{\hU^*,H\}\hU-\{\D,\hU^*\}\hU \Big)\Pi_k,
 \\    \label{def:Beff} B^{\rm{eff}}_k &:=B\hU\Pi_k,
\end{align}
   which both belong to $S(1)$. We may now give the definition of the Gramian matrices, which involves linear ODEs. 
\begin{Definition}\label{flot-Gr-sys}
Let  $x_0 ,\xi_0 \in\R^d$ and  $1\le k\le m$.
We define the $k$-th  Gramian matrix microlocalized in $(x_0 ,\xi_0 )$  as the unique solution $G_{k,x_0 ,\xi_0}$ to the differential  Lyapunov equation 
$$Lyap(A_{k,x_0,\xi_0}  ,B_{k,x_0,\xi_0} ),$$
%
%
where 
$$A_{k,x_0,\xi_0} (t) := A_{k}^{\rm{eff}}(X_{k,x_0,\xi_0}(t),\Xi_{k,x_0,\xi_0}(t))
 \,   \text{ and }   \, 
 B_{k,x_0,\xi_0} (t) := B_{k}^{\rm{eff}}(X_{k,x_0,\xi_0}(t),\Xi_{k,x_0,\xi_0}(t))).$$
\end{Definition}
We also define the bicharacteristic  amplitudes, which are given as solutions to linear ODEs.
\begin{Definition}\label{biflotHk}
For any  $x_0,\xi_0\in\R^d$, for any $\bb_0$ in $\R^N$  and  $1\le k\le m$, 
we define  the  $k$-th  bicharacteristic  amplitude $\bb_{k,x_0,\xi_0,\bb_0}$ 
 as the unique solution to the linear ODE:
  \begin{equation*}
\bb_{k,x_0,\xi_0,\bb_0}'=- A_{k,x_0,\xi_0} \,  \bb_{k,x_0,\xi_0,\bb_0},\quad \bb_{k,x_0 ,\xi_0 ,\bb_0}(0)=\bb_0 .
\end{equation*}
\end{Definition}

The main result of this section is that the inverse of  
observability cost $\mathcal C[\bu_0^\eps]$ for a family of initial data $(\bu_0^\eps)_{\eps\in(0,1]}$ in $L^2(\R^d;\C^N)$ which is localized at $(x_0,\xi_0,\bb_0)$,  in the sense of Definition \ref{def:loc},  converges to a weighted mean of Rayleigh quotients of  the $m$ Gramian matrices with the bicharacteristic amplitudes. 
\begin{thm}\label{thm1 micro}
Let  $(x_0 ,\xi_0,b_0) \in \R^d \times (\R^d\setminus\{0\})  \times \R^N$.
 For $1\le k\le m$, let $G_{T,k}$ be the Gramian matrix given at time $T$, that is  $G_{T,k} := G_{k,x_0,\xi_0} (T),$
  and $\bb_{T,k}$ 
  the characteristic amplitude at time $T$, that is $\bb_{T,k} := \bb_{k,x_0,\xi_0,\bb_0} (T).$    
 Let $(\bu_0^\varepsilon)_{\eps \in (0,1]}$  in $L^2(\R^d ; \R^N)$ be localized at $(x_0 ,\xi_0,b_0)$ in the sense of Definition \ref{def:loc}. 
Then the observability cost $\cC[\bu_0^\eps]$ verifies, as $\eps\to0$, 
\begin{equation}
    \cC[\bu_0^\eps]^{-1}\to \frac{\sum_{k=1}^m \bb_{T,k}^* G_{T,k} \bb_{T,k}}{\sum_{k=1}^m |\bb_{T,k}|^2}.
\end{equation}
\end{thm}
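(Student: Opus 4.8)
The plan is to mimic the proof of Theorem \ref{PDO transport}. The only genuinely new feature is that the principal symbol $H$ is matrix-valued with $m$ well-separated eigenvalues, so the phase-space energy splits into $m$ pieces, one per characteristic field, and the symbolic multiplier has to be chosen \emph{block-diagonal} in the eigenbasis of $H$ in order to kill the singular $\tfrac1\eps$-term. First I would derive, as in Lemma \ref{tr-wi} but now keeping the matrix structure of $H$, that the Wigner matrix $\mathcal W^\eps:=\mathcal W^\eps[\bu]$ of the solution $\bu$ of \eqref{hyper-eps} satisfies
$$\de_t\mathcal W^\eps+\tfrac i\eps[H,\mathcal W^\eps]+\tfrac12\big(\{H,\mathcal W^\eps\}+\{H,\mathcal W^\eps\}^*\big)+A\mathcal W^\eps+\mathcal W^\eps A^*=\mathcal R_\eps,$$
with $\|\mathcal R_\eps(t)\|_{\mathcal S'_{\gamma d+2}}=O(\eps)$ uniformly on $[0,T]$ (this uses that $H$ is Hermitian). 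Since $\|\bu(T)\|_{L^2}^2=\iint\mathcal W^\eps(T):\Id$ by \eqref{total-marg}, it then suffices, exactly as for \eqref{ratio}, to exhibit a multiplier $M\in S(1)$ with $M(0)=0$ and $\int_0^T\|\w[B]\bu(t)\|_{L^2}^2\,\dd t=\iint\mathcal W^\eps(T):M(T)\,\dd x\dd\xi+O(\eps)$, and then to pass to the limit in the quotient.

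Next I would build the multiplier blockwise. For $1\le k\le m$, let $M_k(t,x,\xi)$ be the symbol defined by $G_{k,x,\xi}(t)=M_k(t,X_{k,x,\xi}(t),\Xi_{k,x,\xi}(t))$ — which makes sense because the $\lambda_k$-Hamiltonian flow is volume-preserving, hence invertible — so that, by Definition \ref{flot-Gr-sys} and the chain rule, $M_k$ solves in $S(1)$
$$\de_tM_k+\mdiv(M_kv_{\lambda_k})-M_kA^{\rm{eff}}_k-(A^{\rm{eff}}_k)^*M_k=(B^{\rm{eff}}_k)^*B^{\rm{eff}}_k,\qquad M_k(0)=0,$$
and, since $A^{\rm{eff}}_k$ and $B^{\rm{eff}}_k$ carry the projector $\Pi_k$ on both sides, uniqueness in this linear ODE forces $M_k=\Pi_kM_k\Pi_k$. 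I would then set $M:=\sum_{k=1}^m\mathcal U M_k\mathcal U^*\in S(1)$ and $\Pi_k^{\mathcal U}:=\mathcal U\Pi_k\mathcal U^*$, the spectral projector of $H$ onto $\lambda_k$; then $M=\sum_k\Pi_k^{\mathcal U}M\Pi_k^{\mathcal U}$ is Hermitian, nonnegative, commutes with $H$, and $\iint\mathcal W^\eps:M=\sum_k\iint(\Pi_k^{\mathcal U}\mathcal W^\eps\Pi_k^{\mathcal U}):(\Pi_k^{\mathcal U}M\Pi_k^{\mathcal U})$.

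With the recorded energy $E_R(t):=\iint\mathcal W^\eps(t):M(t)=\langle\w[M(t)]\bu,\bu\rangle_{L^2}$, which vanishes at $t=0$, I would compute $E_R'$ via the Wigner equation and the duality of Lemma \ref{tr-dua-stab}. Two mechanisms enter: (i) $[H,\mathcal W^\eps]:M=0$ pointwise because $M$ commutes with $H$, so the singular $\tfrac1\eps$-term disappears; (ii) after conjugating by $\mathcal U$ and using $\mdiv v_{\lambda_k}=0$ and the Frobenius rules \eqref{rules}, the block $\Pi_k^{\mathcal U}\mathcal W^\eps\Pi_k^{\mathcal U}$ is seen to obey, modulo $O(\eps)$, the transport--Lyapunov equation \emph{dual} to that of $M_k$ — and this is exactly where the corrective Poisson-bracket terms in $A^{\rm{eff}}_k$, see \eqref{def:Aeff}, come in, to absorb the action of $\{H,\mathcal W^\eps\}$ on the $(x,\xi)$-dependence of $\mathcal U$ — while the off-block-diagonal parts $\Pi_j^{\mathcal U}\mathcal W^\eps\Pi_k^{\mathcal U}$, $j\neq k$, solve at leading order equations driven by the non-vanishing factor $\tfrac i\eps(\lambda_j-\lambda_k)$, hence oscillate with a non-stationary phase and, once integrated over $[0,T]$ and tested against a fixed symbol, contribute only $O(\eps)$ (this is where the spectral gap (H1) is used). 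Since moreover $\sum_k\Pi_k^{\mathcal U}B^*B\,\Pi_k^{\mathcal U}$ is the block-diagonal part of $B^*B$, testing it against $\mathcal W^\eps$ equals $\iint\mathcal W^\eps:B^*B=\|\w[B]\bu\|_{L^2}^2$ up to $O(\eps)$ (again by non-stationary phase together with \eqref{q-w}--\eqref{ww}), so altogether $E_R(T)=\int_0^T\|\w[B]\bu(t)\|_{L^2}^2\,\dd t+O(\eps)$, the identity required in the first step. The final step would be to pass to the limit in the quotient: the limit Wigner measure of $\mathcal W^\eps$ necessarily commutes with $H$, hence decomposes into $m$ blocks transported along the $v_{\lambda_k}$ with amplitude equations governed by $A^{\rm{eff}}_k$; by the analogue of Lemma \ref{tr-mu-wig} applied to the equation above and the localization hypothesis $\mathcal W^\eps[\bu_0^\eps]\rightharpoonup(\bb_0\otimes\bb_0)\delta_{x_0,\xi_0}$, this gives $\mathcal W^\eps(T)\rightharpoonup\sum_{k=1}^m(\bb_{T,k}\otimes\bb_{T,k})\,\delta_{X_{k,x_0,\xi_0}(T),\Xi_{k,x_0,\xi_0}(T)}$ with $\bb_{T,k}$ the $k$-th bicharacteristic amplitude of Definition \ref{biflotHk}. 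Then $\iint\mathcal W^\eps(T):\Id\to\sum_k|\bb_{T,k}|^2$ and, by the defining relation of $M_k$, $\iint\mathcal W^\eps(T):M(T)\to\sum_k\bb_{T,k}^*G_{T,k}\bb_{T,k}$; combined with the ratio identity this yields the claim, with the convention $\cC[\bu_0^\eps]^{-1}=0$ when the denominator vanishes.

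The main obstacle is the computation of $E_R'$ in the third step: verifying that the effective sub-principal matrices $A^{\rm{eff}}_k$ of \eqref{def:Aeff}, with their a priori surprising Poisson-bracket corrections, are \emph{exactly} what closes the recorded-energy identity requires the full second-order Weyl calculus (Lemmas \ref{lem:adj}--\ref{lem:comp} carried to order $\eps^2$) for the conjugation of the $\tfrac1\eps H$-term by the $(x,\xi)$-dependent unitary $\mathcal U$, and careful bookkeeping of which terms survive the sandwiching by the $\Pi_k$'s. A secondary, more routine difficulty is the uniform-in-time control near $t=0$ in the non-stationary-phase estimates, where the off-diagonal part of $\mathcal W^\eps$ has not yet become small.
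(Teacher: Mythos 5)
Your plan is a genuinely different route from the paper's. The paper first pseudo-differentially conjugates the unknown itself ($\bu\mapsto\wt\bu=\w[\hU^*]\bu$ in Lemma \ref{lem:diag}, then $\wt\bu\mapsto\bv=(\Id+\eps\w[Q])\wt\bu$ in Lemma \ref{lem:block diag}) so that the system block-decouples into $m$ scalar equations \eqref{eq:vk}; it then simply re-applies the already-proved scalar Theorem \ref{PDO transport} to each $\bv_k$, and handles the off-diagonal pieces of the observed energy with Lemma \ref{lem:off-diag}. You instead keep the original $\bu$, work with its matrix Wigner transform, and push all the structure into a block-diagonal phase-space multiplier $M=\sum_k\hU M_k\hU^*$ that commutes with $H$ so as to kill the singular commutator term. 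This is closer in spirit to the classical Wigner-measure literature for systems (\cites{gerard,gerard-errata,GPSR}) that the paper cites and deliberately avoids; the paper's normal-form route has the advantage of being modular (it reuses the scalar theorem as a black box), while yours, if carried through, would yield the matrix Wigner-transport equation and the recorded-energy identity directly without transforming $\bu$, at the cost of a heavier second-order symbolic computation.

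That said, your plan has two places where it is not yet a proof and where the paper's organization does real work that you would have to reproduce by hand. First, the identity $E_R(T)=\int_0^T\|\w[B]\bu\|_{L^2}^2\,\dd t+O(\eps)$ hinges on showing that the quantization corrections to $[\w[H],\w[M]]$ (which are $O(\eps)$ at the operator level, hence $O(1)$ after the $\tfrac1\eps$ factor) combine with $\tfrac12(\{H,\mathcal W^\eps\}+\{H,\mathcal W^\eps\}^*)$ to reproduce precisely $\mdiv(M_kv_{\lambda_k})$ and the Poisson-bracket corrections $\tfrac12\Pi_k(\{\hU^*,H\}\hU-\{\D,\hU^*\}\hU)\Pi_k$ in \eqref{def:Aeff}; you flag this as the main obstacle, and indeed it is exactly the content that the paper extracts painlessly from \eqref{comm ord 1}--\eqref{prop q}. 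If the bookkeeping does not close, the error is not $O(\eps)$ but $O(1)$, and the ratio identity fails. Second, in the passage to the limit you need the cross terms $(\hU M_j\hU^*)(X_{k}(T),\Xi_{k}(T)):(\bb_{T,k}\otimes\bb_{T,k})$ for $j\neq k$ to vanish; this works only if $\bb_{T,k}$ lies in the range of $\Pi_k$, which in turn requires the initial amplitude fed into Definition \ref{biflotHk} to already be $\Pi_k$-projected (as it is, implicitly, when the paper initializes $\bv_k(0)=\Pi_k(\Id+\eps\w[Q])\w[\hU^*]\bu_0$). You should make this projection explicit in the initialization of the $k$-th bicharacteristic amplitude, otherwise the off-diagonal blocks of the limit Wigner measure pollute the numerator.
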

Theorem \ref{thm1 micro} extends to the general case of hyperbolic systems, without characteristics crossing,  some earlier results by \cite{cui,DL,ll} in the case of the wave equation  and of the Klein-Gordon equation. 
In these works, the authors use some Egorov type methods, for scalar equations. 
Despite some counterparts of the  Egorov theorem for systems have been established in \cites{assal,BG,cordes}, in the proof below  we prefer 
to extend the approach performed in the previous sections, an approach which we believe is more flexible, direct and general.

\subsection{Proof of Theorem \ref{thm1 micro}}
The first step of the proof is to rewrite the system in normal form. To this end, we need the two preliminary results, that we will prove later.
Recall that $\D$ is defined in \eqref{def:D U and Pi}, and set 
\begin{equation}\label{eq:D1}
        2A^\flat(x,\xi) := \{\hU^*,H\}\hU-\{\D,\hU^*\}\hU ,
    \end{equation}
    which is in $S(1)$. 
\begin{Lemma}\label{lem:diag}
    Let $\bu$ be the solution to \eqref{hyper-eps} with initial datum $\bu_0$, and let $\wt\bu:=\w[\hU^*]\bu$. Then $\wt\bu$ solves the system
    \begin{equation}\label{eq:u tilde}
        \left\{\begin{aligned}
            \de_t\wt\bu+\frac i\eps\w[\D]\wt\bu+\w[\Gamma]\wt\bu&=\eps\w[r_\eps]\wt\bu,\\
            \wt\bu(0)&=\w[\hU^*]\bu_0,
        \end{aligned}\right.
    \end{equation}
    where $ \Gamma=\hU^*A\hU+A^\flat$ and the remainder $r_\eps\in S(1)$ is such that $\mathcal N_{\gamma d}(r_\eps)=O(1)$.
\end{Lemma}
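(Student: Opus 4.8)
The plan is to derive the system \eqref{eq:u tilde} by composing the equation \eqref{hyper-eps} on the left with $\w[\hU^*]$ and pushing the result through the symbolic calculus of Lemma \ref{lem:comp}; the algebraic inputs are the identities \eqref{commut}, \eqref{def:D U and Pi}, \eqref{eq:D1} together with the fact that $\hU$ is unitary-valued, so that $\hU\hU^*=\hU^*\hU=\Id$. Since the symbol $\hU^*$ does not depend on $t$, we have $\de_t\wt\bu=\w[\hU^*]\de_t\bu=-\tfrac i\eps\w[\hU^*]\w[H]\bu-\w[\hU^*]\w[A]\bu$, hence
\begin{align*}
\de_t\wt\bu+\tfrac i\eps\w[\D]\wt\bu+\w[\Gamma]\wt\bu
&=\tfrac i\eps\bigl(\w[\D]\w[\hU^*]-\w[\hU^*]\w[H]\bigr)\bu\\
&\quad+\bigl(\w[\Gamma]\w[\hU^*]-\w[\hU^*]\w[A]\bigr)\bu .
\end{align*}

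Next I would evaluate the two brackets with the composition formula. In the first one the principal symbols cancel because $\D\hU^*=\hU^*H$ by \eqref{commut}, leaving $\tfrac\eps{2i}\w[\{\D,\hU^*\}-\{\hU^*,H\}]$ up to an $O(\eps^2)$ remainder; multiplying by $\tfrac i\eps$ and using $\{\hU^*,H\}-\{\D,\hU^*\}=2A^\flat\hU^*$ — which is precisely \eqref{eq:D1} right-multiplied by $\hU^*$ — this bracket contributes $-\w[A^\flat\hU^*]\bu+O(\eps)\bu$. In the second one, since $\Gamma=\hU^*A\hU+A^\flat$ and $\hU\hU^*=\Id$ give $\Gamma\hU^*=\hU^*A+A^\flat\hU^*$, the leading symbols yield $\w[\Gamma\hU^*-\hU^*A]\bu=\w[A^\flat\hU^*]\bu$, again up to $O(\eps)\bu$. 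The two $\w[A^\flat\hU^*]\bu$ contributions cancel, leaving
\begin{align*}
\de_t\wt\bu+\tfrac i\eps\w[\D]\wt\bu+\w[\Gamma]\wt\bu=\eps\,Q_\eps\bu ,
\end{align*}
where $Q_\eps$ is a pseudo-differential operator whose symbol lies in $S(1)$ with seminorms bounded uniformly in $\eps$. Finally, to rewrite the right-hand side in terms of $\wt\bu$, I would use $\hU\hU^*=\Id$ together with Lemma \ref{lem:comp} to get $\w[\hU]\w[\hU^*]=\Id+\eps S_\eps$ with $S_\eps$ uniformly bounded and of symbol in $S(1)$; inverting $\Id+\eps S_\eps$ by a Neumann series for $\eps$ small gives $\bu=\w[\hU]\wt\bu+\eps\,\wt S_\eps\wt\bu$, and one more composition turns $\eps\,Q_\eps\bu$ into $\eps\,\w[r_\eps]\wt\bu$ with $r_\eps\in S(1)$ and $\mathcal N_{\gamma d}(r_\eps)=O(1)$. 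Combined with $\wt\bu(0)=\w[\hU^*]\bu_0$, this is \eqref{eq:u tilde}.

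The only delicate point is the remainder bookkeeping. One has to invoke the symbolic calculus in the sharper form in which the $O(\eps^2)$ composition remainders are themselves pseudo-differential operators with symbols in $S(1)$, controlled in the seminorms $\mathcal N_k$ and not merely in operator norm — otherwise dividing such a remainder by $\eps$, an operation forced by the $\tfrac i\eps H$ term, would destroy the symbol structure — and likewise one needs the Neumann-series inverse of $\Id+\eps S_\eps$ to carry an $\eps$-uniform symbol (the inequality $\mathcal N_{\gamma d}\le\mathcal N_k$ for $k\ge\gamma d$ then transfers whatever seminorm the calculus produces down to $\mathcal N_{\gamma d}$). The algebra itself — the $O(1/\eps)$ cancellation via $\D\hU^*=\hU^*H$ and the identification of the two leftover $O(1)$ contributions with $\mp\w[A^\flat\hU^*]$ — is short and robust once \eqref{eq:D1} is read as the identity for $2A^\flat\hU^*$.
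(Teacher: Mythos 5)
Your argument is correct and follows essentially the same route as the paper: apply $\w[\hU^*]$ to \eqref{hyper-eps}, use the composition Lemma \ref{lem:comp} together with the intertwining identity $\hU^*H=\D\hU^*$ from \eqref{commut}, and identify the order-one correction with $A^\flat$ via \eqref{eq:D1} and $\hU\hU^*=\Id$, the only difference being cosmetic (you verify the cancellation of the two $\w[A^\flat\hU^*]$ contributions instead of substituting the composition identities directly into the equation). Your remarks on the remainder bookkeeping — needing the composition remainders at the symbol level rather than merely in operator norm, and converting the right-hand side from $\bu$ to $\wt\bu$ via a Neumann-series inversion of $\w[\hU]\w[\hU^*]$ — are exactly the points the paper's proof treats implicitly, so they are a welcome clarification rather than a deviation.
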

For all $(x,\xi)\in\R^{2d}$, we set
 \begin{equation}\label{defQ}
Q(x,\xi):=-
\sum_{(j,k) / \, j\ne k}  \frac{i\Pi_j\Gamma(x,\xi)\Pi_k}{\lambda_j(x,\xi)-\lambda_k(x,\xi)}.
 \end{equation} 
Then $Q\in S(1)$. Recall that $A^{\rm{eff}}_k$ is given by
    \eqref{def:Aeff}.
\begin{Lemma}\label{lem:block diag}
    Let $\wt\bu$ be the solution to \eqref{eq:u tilde}. 
    We set $\bv:=(\Id+\eps\w[Q])\wt\bu$ and $\bv_k:=\Pi_k\bv$ for $1\le k\le m$.
    Then
    \begin{equation}\label{eq:vk}
        \left\{\begin{aligned}
            \de_t\bv_k+\frac i\eps\w[\lambda_k\Id]\bv_k+\w[A^{\rm{eff}}_k]\bv_k&=\eps\w[r_\eps]\bv,\\
            \bv_k(0)&=(\Id+\eps\w[Q])\w[\hU^*]\bu_0,
        \end{aligned}\right.
    \end{equation} 
    where   $r_\eps$ is again a remainder in $S(1)$ such that $\mathcal N_{\gamma d}(r_\eps)=O(1).$
\end{Lemma}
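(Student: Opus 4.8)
The plan is to carry out a one-step normal form (``block-diagonalization'') reduction. The operator $\Id+\eps\w[Q]$ is chosen precisely so that conjugating \eqref{eq:u tilde} by it removes, modulo $O(\eps)$, the part of the zeroth-order term $\w[\Gamma]$ that couples the distinct spectral subspaces of $H$; once this is done, applying the constant projector $\Pi_k$ produces a closed equation for $\bv_k=\Pi_k\bv$ whose principal symbol is the scalar $\lambda_k\Id$.

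I would first record the two algebraic facts that drive the proof. Write $\Gamma^{\mathrm{diag}}:=\sum_{k}\Pi_k\Gamma\Pi_k$ and $\Gamma^{\mathrm{off}}:=\Gamma-\Gamma^{\mathrm{diag}}=\sum_{j\ne k}\Pi_j\Gamma\Pi_k$. A direct computation from \eqref{defQ} and \eqref{def:D U and Pi}, using $\Pi_j\Pi_k=\delta_{jk}\Pi_k$, shows that $Q$ solves the homological equation
\begin{equation*}
[\D,Q]=\D Q-Q\D=-i\,\Gamma^{\mathrm{off}};
\end{equation*}
the uniform spectral gap in (H1) of Assumption \ref{hyp-sys} is exactly what makes this equation solvable with $Q\in S(1)$. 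Second, comparing \eqref{def:Aeff} with \eqref{eq:D1} and with $\Gamma=\hU^*A\hU+A^\flat$ from Lemma \ref{lem:diag}, one gets $A^{\mathrm{eff}}_k=\Pi_k\Gamma\Pi_k$.

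Next I would differentiate $\bv=(\Id+\eps\w[Q])\wt\bu$ in time and substitute \eqref{eq:u tilde}. For $\eps$ small, $\Id+\eps\w[Q]$ is invertible, with inverse a semiclassical pseudo-differential operator of symbol $\Id-\eps Q+\eps^2(\cdots)\in S(1)$ (Neumann series), so I would write $\wt\bu=(\Id-\eps\w[Q]+\eps^2 S_\eps)\bv$ with $S_\eps$ bounded on $L^2$ uniformly in $\eps$, and expand all products by the composition rule \eqref{eq:composition}. The $O(1/\eps)$ term is unaffected, while the $O(1)$ terms it generates combine into $i\,\w[[\D,Q]]\bv$; by the homological equation this equals $\w[\Gamma^{\mathrm{off}}]\bv$, which cancels exactly the off-diagonal part of the term $-\w[\Gamma]\bv$. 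One is thus left with
\begin{equation*}
\de_t\bv+\frac i\eps\w[\D]\bv+\w[\Gamma^{\mathrm{diag}}]\bv=\eps\w[r_\eps]\bv
\end{equation*}
for a new $r_\eps$ bounded in $S(1)$. Applying the constant matrix $\Pi_k$ and using $\Pi_k\D=\lambda_k\Pi_k$ (hence $\Pi_k\w[\D]\bv=\w[\lambda_k\Id]\bv_k$) together with $A^{\mathrm{eff}}_k\Pi_k=\Pi_k\Gamma\Pi_k=\Pi_k\Gamma^{\mathrm{diag}}$ (hence $\Pi_k\w[\Gamma^{\mathrm{diag}}]\bv=\w[A^{\mathrm{eff}}_k]\bv_k$) gives \eqref{eq:vk}; the stated initial condition is immediate from $\bv_k(0)=\Pi_k(\Id+\eps\w[Q])\wt\bu(0)$ and Lemma \ref{lem:diag}.

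The main obstacle is the symbolic-calculus bookkeeping needed to guarantee that every term carrying a factor $\eps$ — the remainders in \eqref{eq:composition}, the $\eps^2 S_\eps$ correction in the Neumann series, and the subprincipal Poisson-bracket contributions — can indeed be absorbed into a single $\eps\,\w[r_\eps]$ with $r_\eps\in S(1)$ and $\mathcal N_{\gamma d}(r_\eps)=O(1)$. This requires the sharper version of the $S(1)$-calculus: the Moyal product of two $S(1)$ symbols lies in $S(1)$, with the remainder beyond the first two terms equal to $\eps^2$ times an $S(1)$ symbol whose seminorms are controlled by finitely many seminorms of the factors, and the inverse of $\Id+\eps\w[Q]$ is a pseudo-differential operator with $S(1)$ symbol. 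Since all expansions are truncated at first order in $\eps$, only finitely many compositions occur, so no unbounded loss of derivatives arises — one only has to take $\gamma d$ large enough and track the fixed shift in the number of derivatives involved.
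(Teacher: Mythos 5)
Your argument is correct and is essentially the paper's own proof: the homological equation $[\D,Q]=-i\sum_{j\ne k}\Pi_j\Gamma\Pi_k$ you derive is exactly the paper's identity $-i[\D,Q]+\Gamma=\sum_{k}\Pi_k\Gamma\Pi_k$, and the rest (Neumann-series inverse of $\Id+\eps\w[Q]$, conjugation via the composition law, the observation $A^{\rm eff}_k=\Pi_k\Gamma\Pi_k$, then projection by the constant $\Pi_k$) matches the paper's Poincaré normal-form computation, with only cosmetic differences in presentation.
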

Let us take these two results as granted and finish the proof of Theorem \ref{thm1 micro}. 
Rewriting both energy terms involved in the observability cost in terms of $\bv$ gives, using that $\hU\hU^*=\hU^*\hU=\Id$, 
\begin{align}
    \|\bu(T)\|_{L^2}^2&    =\sum_{k=1}^m\|\bv_k(T)\|_{L^2}^2(1+O(\eps)),\label{en T} \\ \|\w[B]\bu\|^2_{L^2}
    &=\sum_{k=1}^m\|\w[B^{\rm{eff}}_k]\bv_k\|_{L^2}^2+\sum_{j\ne k}\left\langle \w[B\hU]\bv_j,\w[B\hU]\bv_k\right\rangle_{L^2}+O(\eps)\|\bv\|_{L^2}^2\label{en obs} .
\end{align}
Note that we split the diagonal and off-diagonal terms in the observed energy because they exhibit very distinct properties.\\

For the off-diagonal terms, we rely on the following lemma.

\begin{Lemma}\label{lem:off-diag}
    Let $\Lambda(x,\xi)\in S(1)$ be a matrix-valued symbol. Let $1\le j,k\le m$ such that $j\ne k$. Then it holds \begin{equation}
        \int_0^T\langle\w[\Pi_k\Lambda\Pi_j]\bv(t),\bv(t)\rangle_{L^2}\,\dd t=O(\eps)\|\bv(T)\|^2_{L^2}.
    \end{equation} 
\end{Lemma}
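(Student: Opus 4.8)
The plan is to exploit the oscillatory factor $e^{i(\lambda_j-\lambda_k)t/\eps}$ that arises from the difference of the two principal symbols, together with the uniform spectral gap $|\lambda_j-\lambda_k|>C$ from hypothesis (H1). First I would introduce the ``interaction'' symbol and write, at the level of symbols, the quantity $\langle \w[\Pi_k\Lambda\Pi_j]\bv,\bv\rangle_{L^2}$ using $\bv_k=\Pi_k\bv$ and $\bv_j=\Pi_j\bv$, so that it becomes $\langle \w[\Lambda]\bv_j,\bv_k\rangle_{L^2}$ up to $O(\eps)$ errors coming from the symbolic calculus (Lemmas \ref{lem:adj}, \ref{lem:comp}) and the fact that $\Pi_j$, $\Pi_k$ are constant projectors. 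Then I would use the evolution equations \eqref{eq:vk} for $\bv_j$ and $\bv_k$: each $\bv_k$ satisfies a Schr\"odinger-type equation with the fast part $\frac i\eps\w[\lambda_k\Id]$ and bounded lower-order terms.

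The key computation is an integration by parts in time. Consider the time derivative of $\langle \w[N_{jk}]\bv_j,\bv_k\rangle_{L^2}$ for a suitable symbol $N_{jk}$ to be chosen. Using \eqref{eq:vk}, the leading contribution is
\begin{align*}
\frac{d}{dt}\langle \w[N_{jk}]\bv_j,\bv_k\rangle_{L^2}
= \frac i\eps\Big\langle \big(\w[N_{jk}]\w[\lambda_j\Id]-\w[\lambda_k\Id]\w[N_{jk}]\big)\bv_j,\bv_k\Big\rangle_{L^2} + (\text{bounded terms}),
\end{align*}
and by the composition law \eqref{eq:composition} the bracket equals $\w[(\lambda_j-\lambda_k)N_{jk}]+O(\eps)$. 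Hence, choosing $N_{jk}:=\Lambda/(\lambda_j-\lambda_k)$ — which is licit precisely because of the gap in (H1), so $N_{jk}\in S(1)$ — the leading term reproduces $\frac i\eps\langle\w[\Lambda]\bv_j,\bv_k\rangle_{L^2}$. Therefore
\begin{align*}
\langle\w[\Pi_k\Lambda\Pi_j]\bv,\bv\rangle_{L^2}
= \frac{\eps}{i}\,\frac{d}{dt}\langle \w[N_{jk}]\bv_j,\bv_k\rangle_{L^2}
+ O(\eps)\big(\|\bv_j\|_{L^2}^2+\|\bv_k\|_{L^2}^2+\|\bv\|_{L^2}^2\big).
\end{align*}
Integrating from $0$ to $T$, the first term on the right becomes $\frac{\eps}{i}\big[\langle\w[N_{jk}]\bv_j,\bv_k\rangle_{L^2}\big]_0^T$, and the error term integrates to $O(\eps)\int_0^T\|\bv(t)\|_{L^2}^2\,dt$.

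To finish, I would invoke the (approximate) conservation of the total energy $\|\bv(t)\|_{L^2}^2$ along the flow \eqref{eq:vk}: since the fast part $\frac i\eps\w[\lambda_k\Id]$ is skew-adjoint up to $O(\eps)$ and the remaining terms $\w[A^{\rm eff}_k]$, $\eps\w[r_\eps]$ are bounded operators uniformly in $\eps$, a Gr\"onwall argument on $[0,T]$ gives $\|\bv(t)\|_{L^2}^2\sim\|\bv(0)\|_{L^2}^2\sim\|\bu_0^\eps\|_{L^2}^2$ with constants depending only on $T$; in particular $\|\bv(t)\|_{L^2}^2 = \|\bv(T)\|_{L^2}^2(1+O(1))$ and both boundary terms and the time-integrated error are $O(\eps)\|\bv(T)\|_{L^2}^2$, using also $\|\w[N_{jk}]\|_{\mathcal L(L^2)}=O(1)$ by \eqref{norm op}. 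This yields the claimed bound. The main obstacle is bookkeeping the symbolic-calculus remainders carefully — ensuring that every commutator of $\w[\lambda_k\Id]$ with $\w[N_{jk}]$, $\w[\Pi_k]$ etc.\ is genuinely $O(\eps)$ in $\mathcal L(L^2;L^2)$ and that the division by $\lambda_j-\lambda_k$ preserves the $S(1)$ estimates uniformly in $(x,\xi)$ — rather than any conceptual difficulty; the gap condition (H1) is exactly what makes this go through, and the argument is the classical non-stationary phase / averaging mechanism transplanted to the Weyl-quantized matrix setting.
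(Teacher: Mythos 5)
Your proposal is correct and follows essentially the same route as the paper's proof: both introduce the multiplier $\Pi_k\Lambda\Pi_j/(\lambda_j-\lambda_k)$ (or, equivalently, your $N_{jk}=\Lambda/(\lambda_j-\lambda_k)$ sandwiched by the constant projectors), integrate the resulting exact derivative in time, use the symbolic commutator identity $[\Pi_k\Lambda\Pi_j/(\lambda_j-\lambda_k),\mathcal D]=\Pi_k\Lambda\Pi_j$ (you get it through the split equations for $\bv_j,\bv_k$ and $\lambda_j-\lambda_k$ appearing in the composition law), and bound boundary terms and the time-integrated error by $O(\eps)\|\bv(T)\|_{L^2}^2$ via norm quasi-conservation on $[0,T]$. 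Apart from an immaterial sign typo in the leading $\pm i/\eps$ factor and your more explicit Gr\"onwall step, the two arguments coincide.
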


Applying this lemma with $\Lambda=\hU^*B^*B\hU$ yields, for $1\le j\ne k\le m$, 
\begin{equation}\label{estim:off-diag}
    \left\langle\w[B\hU]\bv_j,\w[B\hU]\bv_k\right\rangle_{L^2}=O(\eps)\|\bv(T)\|_{L^2}^2.
\end{equation}

For the diagonal terms, for each $1\le k\le m$, we  apply Theorem \ref{PDO transport} to the system \eqref{eq:vk}, for $1\le k\le m$, to obtain that 
\begin{align}
    \|\bv(T)\|_{L^2}^2&=|\bb_{T,k}|^2+o(1),\label{estim:diag1}\\
    \int_0^T\|\w[B\hU]\bv_k\|_{L^2}^2&=\bb_{T,k}^*G_{T,k}(T)\bb_{T,k}+o(1).\label{estim:diag2}
\end{align} 

Combining \eqref{estim:off-diag}, \eqref{estim:diag1} and \eqref{estim:diag2}, and substituting them into \eqref{en T} and \eqref{en obs} yields \begin{align*}
    \|\bu(T)\|_{L^2}^2&=\sum_{k=1}^m |\bb_{k,x_0,\xi_0,\bb_0}(T)|^2+o(1),\\
    \int_0^T\|\w[B]\bu\|_{L^2}^2&=\sum_{k=1}^m\bb_{k,x_0,\xi_0,\bb_0}(T)^*G_{k,x_0,\xi_0}(T)\bb_{k,x_0,\xi_0,\bb_0}(T)+o(1).
\end{align*}
Plugging those estimates into the definition of the cost as given by \eqref{costuf}  concludes the proof.

\subsection{Diagonalisation of the propagation term. Proof of Lemma \ref{lem:diag}}
We  consider the system \eqref{hyper-eps}, and apply to it the operator $\w[\hU^*]$, giving
\begin{equation}\label{hyper eps U}
    \de_t\wt\bu+\frac i\eps\w[\hU^*]\w[H]\bu+\w[\hU^*]\w[A]\bu= 0.
\end{equation}
Applying Lemma \ref{lem:comp} for the identity \eqref{commut}, we obtain \begin{equation}
    \w[\hU^*]\w[H]+\frac\eps{2i} \w[\{\hU^*,H\}]=\w[\D]\w[\hU^*]+\frac\eps{2i}\w[\{\D,\hU^*\}]+\eps^2\w[r_\eps],
\end{equation}
where $r_\eps\in S(1)$ is a remainder symbol such that $\mathcal N_{\gamma d}(r_\eps)$ is bounded uniformly in $\eps$. By a slight abuse of notation, we will denote all such remainders $r_\eps$, despite them not being the same in all the equalities. 
Rearranging, using that $\hU\hU^*=\Id$ and recalling the definition of $A^\flat$ in \eqref{eq:D1}, it follows that 
\begin{equation}\label{comm ord 1}
    \w[\hU^*]\w[H]=\w[\D]\w[\hU^*]+\frac\eps i\w[A^\flat]\w[\hU^*]+\eps^2\w[r_\eps].
\end{equation}
Meanwhile, for the term of order 0, we have \begin{equation}\label{comm ord 0}
    \w[\hU^*]\w[A]=\w[\hU^*A\hU]\w[\hU^*]+\eps\w[r_\eps].
\end{equation}
Substituting  \eqref{comm ord 1} and \eqref{comm ord 0} into \eqref{hyper eps U}, it finally yields \begin{equation}
    \de_t\wt\bu+\left(\frac i\eps\w[\D]+\w[A^\flat]+\w[\hU^*A\hU]\right)\w[\hU^*]\bu=\eps\w[r_\eps]\wt\bu,
\end{equation} 
and the claim immediately follows.

\begin{remark}
    For any $1\le k\le m$, multiplying equality \eqref{comm ord 1} on the left by $\Pi_k$ and on the right by $\w[\hU]\Pi_k$ gives, denoting by $\sigma_p$ the principal symbol of a pseudo-differential operator, \begin{equation}\label{eq:D1 proj}
        \sigma_p\left(\Pi_k(\w[\hU^*]\w[H]\w[\hU] - \w[\D]\w[\hU^*]\w[\hU])\Pi_k\right)=-i\eps \Pi_kA^\flat\Pi_k.
    \end{equation}
    By property \eqref{eq:adjoint} of the Weyl quantization, the first operator in the left-hand side of \eqref{eq:D1 proj} is self-adjoint, and the second one is also self-adjoint, as a consequence of the symbolic identity $(\Pi_k \D)(x, \xi)=( \lambda_k \Pi_k)(x, \xi)$. Therefore the symbolic matrix $i(\Pi_k A^\flat \Pi_k)(x, \xi)$ is Hermitian for $1 \le k \le m$.
\end{remark}

\subsection{Block-diagonalisation of the amplification term. Proof of Lemma \ref{lem:block diag}}

We follow a method initially used by Poincar\'e in the setting of ODEs to further diagonalize the system, into a pseudo-differential normal form where  the term of order $\frac1\eps$ is diagonal and the term of order $1$ is block-diagonal.
Since $(\Id+\eps\w[Q])^{-1}=\Id-\eps\w[Q]+\eps^2\w[r_\eps]$, we get
\begin{align}
    (\Id+\eps\w[Q])\left(\w\left[\frac i\eps\D+\Gamma\right]\right)(\Id+\eps\w[Q])^{-1}&=\w\left[\frac i\eps\D-i[\D,Q]+\Gamma\right]\notag\\&\quad+\eps\w[r_\eps].
\end{align}
Now, we observe from the definition of $Q$ in \eqref{defQ} that
\begin{align}
    -i[\D,Q]+\Gamma=\sum_{k=1}^m\Pi_k\Gamma\Pi_k,\label{prop q}
\end{align}
Substituting \eqref{prop q} into this equality then yields \begin{align}
    (\Id+\eps\w[Q])\left(\w\left[\frac i\eps\D+\Gamma\right]\right)(\Id+\eps\w[Q])^{-1}&=\w\left[\frac i\eps\D+\sum_{k=1}^m\Pi_k\Gamma\Pi_k\right]\notag\\&\quad+\eps\w[r_\eps]
\end{align}
Performing the change of variable $\bv=(\Id+\eps\w[Q])\wt\bu$ in the system \eqref{eq:u tilde} then gives
\begin{equation} \label{eq-v-fi}
    \de_t\bv+\w\left[\frac i\eps\D+\sum_{k=1}^m\Pi_k\Gamma\Pi_k\right]\bv=\eps\w[r_\eps]\bv.
\end{equation}
For $1\le k\le m$, multiplying this equation on the left by $\Pi_k$ then leads to the system \eqref{eq:vk} and the claim is proven.

\subsection{Estimate of the off-diagonal energy terms. Proof of Lemma \ref{lem:off-diag}}

By \eqref{eq-v-fi} and standard pseudo-differential calculations, for any $1\le j\ne k\le m$, recalling that $\lambda_{j}$ denotes the eigenvalues of $\mathcal D$, 
\begin{align*}
\frac{\dd}{\dd t}\left\langle\w\left[\frac{\Pi_k\Lambda\Pi_j}{\lambda_j-\lambda_k}\right]\bv,\bv\right\rangle&=-
\frac i\eps\left\langle\w\left[\left[\frac{\Pi_k\Lambda\Pi_j}{\lambda_j-\lambda_k}, \mathcal D\right]\right]\bv,\bv\right\rangle + O(1)\|\bv\|_{L^2}^2%
\end{align*}
Now, we observe that, for any $1\le j\ne k\le m$,
$$\left[\frac{\Pi_k\Lambda\Pi_j}{\lambda_j-\lambda_k}, \mathcal D\right] = \Pi_k\Lambda\Pi_j ,$$
and we set 
$$E_{j,k}(t):=\langle\w[\Pi_k\Lambda\Pi_j]\bv,\bv\rangle_{L^2}.$$ 
This immediately yields \begin{equation}E_{j,k}(t)=i\eps\frac{\dd}{\dd t}\left\langle\w\left[\frac{\Pi_k\Lambda\Pi_j}{\lambda_j-\lambda_k}\right]\bv,\bv\right\rangle + O(\eps)\|\bv(t)\|^2_{L^2}\end{equation}
Then, integrating in time yields\begin{align*}
    \int_0^TE_{j,k}(t)\,\dd t&=i\eps\left\langle\w\left[\frac{\Pi_k\hU^*B^*B\hU\Pi_j}{\lambda_j-\lambda_k}\right]\bv(T),\bv(T)\right\rangle_{L^2}\\&\quad-i\eps\left\langle\w\left[\frac{\Pi_k\hU^*B^*B\hU\Pi_j}{\lambda_j-\lambda_k}\right]\bv(0),\bv(0)\right\rangle_{L^2}+ O(\eps)\|\bv(T)\|_{L^2}^2 .
\end{align*}
By the non-crossing condition, $\w\left[\frac{\Pi_k\hU^*B^*B\hU\Pi_j}{\lambda_j-\lambda_k}\right]$ is a bounded operator on $L^2$, and the claim immediately follows.



\section{Compressible gas-dynamics in a rapidly rotating frame}

As an illustration we consider the non-isentropic compressible Euler equations for perfect gases in a rotating frame (see for instance \cites{benzoni,fer}) in $2$D: 
 \begin{equation}\label{inc eul}
    \left\{
    \begin{aligned}
    \de_t\bu+(\bu\cdot\nabla)\bu+\rho^{-1}\nabla p&=0,\\
    \de_tp+\bu\cdot\nabla p+\rho c^2\nabla\cdot\bu+\frac{b(x_2)}{\rm{Ro}}\bu^\perp&=0,\\
    \de_ts+\bu\cdot\nabla s&=0,
    \end{aligned}
    \right.
\end{equation}
where $\rho$ is the density, $\bu$ the velocity, $p$ the pressure, $s$ the entropy, $\rm{Ro}\in\R^+$ the Rossby number, $\bu^\perp=(-u_2,u_1)$, $b$ is a smooth function of $x_2$ and $c$ is the sound speed.
We consider the  the case of polytropic gases, for which the pressure law is 
$p=(\gamma-1)\rho^{\gamma}e^{\frac{s}{c}},$ for $\gamma>1$, and 
  the sound speed $c$ is given by $c=\sqrt{\gamma p/\rho}$. 
  Then, setting
   $\pi:=(p/(\gamma-1))^{2\gamma},$
   the  system \eqref{inc eul} is recast as \begin{equation*}
    \left\{\begin{aligned}
    \de_t\bu+(\bu\cdot\nabla)\bu+2\gamma e^{\frac s{\gamma c}}\pi\nabla\pi+\frac{b(x_2)}{\rm{Ro}}\bu^\perp&=0,\\
    \de_t\pi+\bu\cdot\nabla\pi+\frac{\gamma-1}2\pi\nabla\cdot\bu&=0,\\
    \de_ts+\bu\cdot\nabla s&=0.
    \end{aligned}\right.
\end{equation*}
The linearization of the system at a steady solution $(\overline{\bu}, \overline{\pi}, \overline{s})$, where $\overline{\bu}=(\overline{u}_1(x_2), 0)$ is a shear flow, $\overline{s} \in \R^+$ is a constant value and $\overline{\pi}=\overline{\pi}(x_2)$ satisfies the relation
$2\gamma \text{Ro} \exp(\overline{s}/c\gamma) (\overline{\pi} \overline{\pi}')(x_2)= -b(x_2) \overline{u}_1(x_2)$. 
In the case where $\text{Ro}=\eps$, the semi-classical approximation  
yields the following  system: 
\begin{equation}\label{eul cor weyl}
\de_t \textbf{U}+\w\left[\frac i\eps H +  A\right]\textbf{U}= 0,
\end{equation}
where
 $$\textbf{U}=\begin{pmatrix}u_1\\u_2\\\pi\\s\end{pmatrix}, \quad H=\begin{pmatrix}\overline u_1(x_2) \xi_1 &-ib(x_2) &2\gamma e^{\frac{\overline s}{\gamma c}}\overline\pi \xi_1&0\\ ib &\overline u_1(x_2) \xi_1 & 2\gamma e^{\frac{\overline s}{\gamma}}\overline\pi \xi_2&0\\ \frac{\gamma-1}2\overline\pi\xi_1&\frac{\gamma-1}2\overline\pi\xi_2&\overline u_1(x_2) \xi_1 &0\\0&0&0&\overline u_1(x_2) \xi_1 \end{pmatrix},$$
and
$$A =\begin{pmatrix}0 &\overline u_1'(x_2) &0&0\\0&0&2\gamma e^{\frac{\overline s}{\gamma}} \overline\pi'(x_2) &2\gamma e^{\frac{\overline s}{\gamma}}\overline\pi\overline\pi'(x_2)\\0&\overline\pi'(x_2)&0&0\\0&0&0&0 \end{pmatrix} .$$

The principal symbol $H$ is not symmetric; however, setting
 $$S :=\diag(1, 1, 4\frac{\gamma}{\gamma-1}\exp({\overline s}/{c\gamma}),1),$$
 which only depends on $x$, 
 and computing 
 $$\wt H=\sqrt{S}H\sqrt{S}^{-1}=\begin{pmatrix}\overline u_1\xi_1 &-ib&\eta \xi_1&0\\ ib &\overline u_1\xi_1 & \eta \xi_2&0\\ \eta\xi_1&\eta\xi_2&\overline u_1\xi_1 &0\\0&0&0&\overline u_1\xi_1 \end{pmatrix},$$
 where  
 $\eta := \sqrt{\gamma(\gamma-1)} \exp(\frac{\overline s}{2c\gamma})\overline\pi$ 
 and $\sqrt{S}$ is the positive square root of $S$, we observe that $ \wt H$
 is symmetric. 
 We are then led to a system of the form: 
\begin{equation}\label{eul cor sym}
\de_t\textbf{W}+\w\left[\frac i\eps\wt H+  \wt A\right]\textbf{W}=\eps\w[r_\eps]\textbf{W} .
\end{equation}
%
The  eigenvalues of  $\wt H$  are 
%
$\lambda_1=\overline u_1(x_2)\xi_1$ 
with multiplicity  $2$, $$ \lambda_{2}= \overline u_1\xi_2 + \sqrt{b^2+C_1^2\overline\pi^2 \exp({\overline s}/{c\gamma})|\xi|^2},$$
with  multiplicity $1$, 
and 
$$ \lambda_{3} = \overline u_1\xi_2 - \sqrt{b^2+C_1^2\overline\pi^2 \exp({\overline s}/{c\gamma})|\xi|^2}  ,$$
with  multiplicity $1$.
The non-crossing assumption is verified for $(x_1, x_2)$ such that 
$$b^2(x_2)+C_1^2\overline\pi^2(x_2)\exp({\overline s}/{c\gamma})|\xi|^2\ne0.$$


Two orthogonal eigenvectors of $\lambda_1$ are given by 
$$(0,0,0,1)^T,\quad  \frac1{\langle\xi\rangle_b}(-i\xi_2, i\xi_1, \wt b, 0)^T,\quad \text{where}  \quad \langle\xi\rangle_b=\sqrt{|\xi^2|+\wt b^2}, \quad \wt b=\frac{b}{C_1\overline\pi}\exp(-{\overline s}/{2c\gamma}).$$
We can then compute the block  of the (block-diagonal) matrix $A^\flat(x, \xi)$ in \eqref{eq:D1} associated to $\lambda_1$
\begin{align*}
 A^{\flat}_{\lambda_1}(x,\xi)=\begin{pmatrix}
0 & 0 \\
0 & - \frac{i\xi_1  C_1 \exp(\overline{s}/2\gamma) \overline{\pi}\de_2 \wt{b}(x_2)}{\langle \xi \rangle^2_b}
\end{pmatrix}.
\end{align*}


\bigskip \ \par \noindent {\bf Acknowledgements.} F. S. was partially supported by the Agence Nationale de la Recherche, Project SINGFLOWS, grant ANR-18-CE40-0027-01,  Project BOURGEONS, grant ANR-23-CE40-0014-01, and Institut Universitaire de France. 
R.B. acknowledges financial support by the Italian Ministry for Universities and Research (MUR), Project PRIN 2022 TESEO, grant 2022HSSYPN.  


\end{document}